\DeclareMathOperator{\C}{\mathcal{C}}
\newtheorem{theorem}{Theorem}[section]
\newtheorem{lemma}[theorem]{Lemma}
\newtheorem{corollary}[theorem]{Corollary}
\newtheorem{definition}[theorem]{Definition}
\newtheorem{proposition}[theorem]{Proposition}
\newtheorem{conjecture}[theorem]{Conjecture}
\newtheorem{remark}[theorem]{Remark}
\newcommand{\cL}{{\mathcal L}}
\newcommand{\F}{{\mathbb F}}
\newcommand{\fq}{{\mathbb F}_{q}}
\newcommand{\N}{\mathrm{N}}
\title{Linearized trinomials with maximum kernel}
\author{Paolo Santonastaso and Ferdinando Zullo}
\date{}
\begin{document}
\maketitle

\begin{abstract}
Linearized polynomials have attracted a lot of attention because of their applications in both geometric and algebraic areas.
Let $q$ be a prime power, $n$ be a positive integer and $\sigma$ be a generator of $\mathrm{Gal}(\F_{q^n}\colon\F_q)$.
In this paper we provide closed formulas for the coefficients of a $\sigma$-trinomial $f$ over $\F_{q^n}$ which ensure that the dimension of the kernel of $f$ equals its $\sigma$-degree, that is \emph{linearized polynomials with maximum kernel}.
As a consequence, we present explicit examples of linearized trinomials with maximum kernel and characterize those having $\sigma$-degree $3$ and $4$. Our techniques rely on the tools developed in \cite{McGuireMuller}.
Finally, we apply these results to investigate a class of rank metric codes introduced in \cite{CsMPZh}, to construct quasi-subfield polynomials and cyclic subspace codes, obtaining new explicit constructions to the conjecture posed in \cite{Trautmann}.
\end{abstract}

\bigskip
{\it AMS subject classification:} 11T06, 15A04.

\bigskip
{\it Keywords:} linearized polynomial, recursive formula, closed formula, subspace polynomial, rank metric code, cyclic subspace code, quasi-subfield polynomial.

\section{Introduction}

Let $q=p^h$, where $h$ is a positive integer and $p$ is a prime.
Linearized polynomials over $\F_{q^n}$
correspond to $\F_q$-linear transformations of the $n$-dimensional $\F_q$-vector space $\F_{q^n}$. 
For this reason they have been intensively investigated and used to describe geometric and algebraic objects such as rank metric codes, $\F_q$-linear sets, see e.g.\ \cite{BM,BZ,qres,teoremone,McGuireSheekey,Sihem,PZ2019,John,Zanella}.

A $\sigma$-\emph{linearized polynomial} (for short $\sigma$-\emph{polynomial}) over $\F_{q^n}$ is a polynomial of shape
\[f(x)=\sum_{i=0}^t a_i x^{\sigma^i},\]
where $a_i\in \F_{q^n}$, $t$ is a positive integer and $\sigma$ is a generator of the Galois group $\mathrm{Gal}(\F_{q^n}\colon \F_q)$.
Furthermore, if $a_t \neq 0$ we say that $t$ is the $\sigma$-\emph{degree} of $f(x)$.
We will denote by $\mathcal{L}_{n,q,\sigma}$ the set of all $\sigma$-polynomials over $\F_{q^n}$ (or simply by $\mathcal{L}_{n,q}$ if $\sigma\colon x \in \F_{q^n}\mapsto x^q\in \F_{q^n}$).
In the remainder of this paper we shall
always silently identify the elements of $\cL_{n,q,\sigma}$ with the
endomorphisms of $\F_{q^n}$ they represent and, as such,
speak also of \emph{kernel} and \emph{rank} of a polynomial.
Clearly, the kernel of $f(x)\in \cL_{n,q,\sigma}$ coincides with the set of the roots of $f(x)$ over $\F_{q^n}$.

As a consequence of \cite[Theorem 5]{GQ2009} and \cite[Theorem 10]{GQ2009x}, for $\sigma$-linearized polynomials we have the following bound on the number of roots. 

\begin{theorem}\label{Gow}
Consider
\[f(x)=a_0x+a_1x^{\sigma}+\cdots+a_{k-1}x^{\sigma^{k-1}}+a_kx^{\sigma^k}\in \mathcal{L}_{n,q,\sigma},\]
with $k\leq n-1$ and let $a_0,a_1,\ldots,a_k$ be elements of $\F_{q^n}$ not all of them zero. Then 
\[ \dim_{\F_q}(\ker (f(x)))\leq k. \]
Furthermore, if $\dim_{\F_q}(\ker (f(x)))=k$ then $\N_{q^n/q}(a_0)=(-1)^{nk}\N_{q^n/q}(a_k)$.
\end{theorem}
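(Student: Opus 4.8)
The plan is to reduce everything to the nonvanishing of generalized Moore (Dickson, or $q$-Vandermonde) determinants, exploiting only that $f$ is $\F_q$-linear because $\sigma$ fixes $\F_q$ pointwise. For the bound, observe that $\ker(f(x))$ is an $\F_q$-subspace; write $d=\dim_{\F_q}\ker(f(x))$ and pick an $\F_q$-basis $v_1,\dots,v_d$. The vanishing conditions $\sum_{i=0}^{k}a_i v_j^{\sigma^i}=0$ for $j=1,\dots,d$ say exactly that $(a_0,\dots,a_k)$ lies in the right kernel of the $d\times(k+1)$ matrix $A=(v_j^{\sigma^i})$. If $d\ge k+1$, the leading $(k+1)\times(k+1)$ submatrix $(v_j^{\sigma^i})_{0\le i\le k,\,1\le j\le k+1}$ is a Moore matrix of the $\F_q$-independent elements $v_1,\dots,v_{k+1}$, hence nonsingular; this forces $a_0=\dots=a_k=0$, contradicting the hypothesis. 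Therefore $d\le k$, which is the first assertion. (The key external fact here is that a $\sigma$-Moore determinant vanishes precisely for $\F_q$-linearly dependent arguments; for an arbitrary generator $\sigma$ this follows from the Frobenius case after relabelling, or can be cited directly.)

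For the equality case, assume $\dim_{\F_q}\ker(f(x))=k$ and set $W=\ker(f(x))$ with $\F_q$-basis $w_1,\dots,w_k$. Now the $k\times(k+1)$ Moore matrix $A=(w_j^{\sigma^i})_{0\le i\le k,\,1\le j\le k}$ has rank exactly $k$, since its first $k$ columns form a nonsingular $k\times k$ Moore matrix of $w_1,\dots,w_k$. Hence its right kernel is one-dimensional and is spanned by the vector of signed maximal minors, so the given coefficient vector must satisfy $a_i=\lambda(-1)^i\det(A_{\hat\imath})$ for a common scalar $\lambda$, where $A_{\hat\imath}$ denotes $A$ with its column of index $i$ deleted. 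Writing $\Delta=\det(A_{\hat k})=\det\big(w_j^{\sigma^i}\big)_{0\le i\le k-1}$ for the $\sigma$-Moore determinant of $w_1,\dots,w_k$ (nonzero by the independence criterion), and noting that deleting the index-$0$ column produces exactly the matrix obtained by applying $\sigma$ entrywise to $A_{\hat k}$, we get $\det(A_{\hat 0})=\Delta^{\sigma}$. Comparing the entries of index $0$ and $k$ then yields the single scalar relation
\[
a_0=(-1)^{k}\,\frac{\Delta^{\sigma}}{\Delta}\,a_k .
\]

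Finally I would take norms. Using that $\sigma$ generates $\mathrm{Gal}(\fqn\colon\F_q)$, so $\N_{q^n/q}(\beta)=\prod_{l=0}^{n-1}\beta^{\sigma^l}$ and $\sigma^n=\mathrm{id}$, the factor $\Delta^{\sigma}/\Delta$ telescopes:
\[
\N_{q^n/q}\!\left(\frac{\Delta^{\sigma}}{\Delta}\right)=\prod_{l=0}^{n-1}\frac{\Delta^{\sigma^{l+1}}}{\Delta^{\sigma^{l}}}=1,
\]
while $\N_{q^n/q}\big((-1)^{k}\big)=(-1)^{nk}$. Combining with the displayed relation gives $\N_{q^n/q}(a_0)=(-1)^{nk}\N_{q^n/q}(a_k)$, as claimed (the case $k=0$ being trivial since then $f(x)=a_0x$). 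The steps I expect to require the most care are the bookkeeping of the cofactor signs and, above all, the column-shift identity $\det(A_{\hat 0})=\Delta^{\sigma}$ together with the nonvanishing of the $\sigma$-Moore determinant for arbitrary generator $\sigma$; once these are pinned down the telescoping that produces the exact sign $(-1)^{nk}$ is immediate.
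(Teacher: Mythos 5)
Your argument is correct, but there is nothing in the paper to compare it against: Theorem \ref{Gow} is not proved there, it is imported as a consequence of \cite[Theorem 5]{GQ2009} and \cite[Theorem 10]{GQ2009x}. Your self-contained route is a legitimate elementary alternative: the bound comes from the nonsingularity of the $(k+1)\times(k+1)$ $\sigma$-Moore matrix of independent kernel elements, and the norm relation from the observation that, in the equality case, the coefficient vector is forced (up to a scalar) to equal the vector of signed maximal minors of the $k\times(k+1)$ Moore matrix of a kernel basis. The two points carrying the real weight are exactly the ones you flag, and both hold. First, the Moore criterion for a general generator $\sigma\colon x\mapsto x^{q^s}$ does reduce to the Frobenius case: since $\gcd(s,n)=1$, the fields $\F_{q^n}$ and $\F_{q^s}$ are linearly disjoint over $\F_q$, so $\F_q$-independent elements of $\F_{q^n}$ remain $\F_{q^s}$-independent in $\F_{q^{sn}}$ and the classical Moore determinant applies over $\F_{q^{sn}}$; this is the same device the paper itself uses (via Lemma \ref{lemma:LTZ}) to pass from $s=1$ to general $s$ in Theorem \ref{th:mainMcG}. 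Second, deleting the index-$0$ column leaves the entries $w_j^{\sigma^{i+1}}$, $0\le i\le k-1$, so $\det(A_{\hat 0})=\Delta^{\sigma}$ as you claim. The cofactor signs, the telescoping $\N_{q^n/q}(\Delta^{\sigma}/\Delta)=1$, and $\N_{q^n/q}\bigl((-1)^k\bigr)=(-1)^{nk}$ all check out; note also that the equality case automatically forces $a_k\neq 0$ (apply your first part to the largest index with nonzero coefficient), so the division by $a_k$ is harmless. What your approach buys is a transparent, purely determinantal proof; what the citation buys the authors is brevity and the broader setting of Gow--Quinlan, whose argument runs instead through the algebra of semilinear transformations and the associated autocirculant (Dickson) matrices.
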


Particular attention was paid for those polynomials attaining the bound of Theorem \ref{Gow}, which are called \emph{linearized polynomials with maximum kernel} or \emph{subspace polynomials} when $\sigma\colon x \in \F_{q^n}\mapsto x^q\in \F_{q^n}$.
The coefficients of a $\sigma$-linearized polynomials have been characterized as follows.

\begin{theorem}(\cite[Theorem 1.2 and Corollary 3.2]{teoremone},\cite[Theorem 7]{McGuireSheekey})\label{th:maxker}
Consider
\[f(x)=a_0x+a_1x^{\sigma}+\cdots+a_{k-1}x^{\sigma^{k-1}}-x^{\sigma^k}\in \mathcal{L}_{n,q,\sigma},\]
with $k\leq n-1$.
Then $f(x)$ has maximum kernel if and only if
\begin{equation}\label{eq:Cfrel}
C_f C_f^{\sigma} \cdot \ldots \cdot C_f^{\sigma^{n-1}}=I_k,
\end{equation}
where 
\[C_f=\left( \begin{matrix} 
0 & 0 & \cdots & 0 & a_0 \\
1 & 0 & \cdots & 0 & a_1 \\
0 & 1 & \cdots & 0 & a_2 \\
\vdots & \vdots & \ddots & \vdots & \vdots \\
0 & 0 & \cdots & 1 & a_{k-1} 
\end{matrix}
\right)\] 
and is called the $\sigma$-companion matrix of $f$, $C_f^{\sigma^i}$ is the matrix obtained from $C_f$ by applying to each of its entries the automorphism $x\mapsto x^{\sigma^i}$ and $I_k$ is the identity matrix of order $k$.
Furthermore, Equation \eqref{eq:Cfrel} holds if and only if
\begin{equation}\label{eq:Cfrelshort}
C_f C_f^{\sigma} \cdot \ldots \cdot C_f^{\sigma^{n-1}}\mathbf{e}_0=\mathbf{e}_0,
\end{equation}
where $\mathbf{e}_0=(1,0,\ldots,0)^T\in \mathbb{F}_{q^n}^{k\times 1}$.
\end{theorem}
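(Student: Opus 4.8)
The plan is to realize the matrix $A := C_f C_f^{\sigma}\cdots C_f^{\sigma^{n-1}}$ as the $n$-th iterate of a $\sigma$-semilinear operator, and then translate both the kernel condition and the identity-matrix condition into statements about that operator. On $V=\F_{q^n}^{k}$ I introduce the two $\sigma$-semilinear maps $\Phi(\mathbf v)=C_f\,\mathbf v^{\sigma}$ and (whenever $a_0\neq 0$, so that $\det C_f=(-1)^{k-1}a_0\neq 0$ and $C_f$ is invertible) $\Psi(\mathbf v)=(C_f^{T})^{-1}\mathbf v^{\sigma}$, where $\mathbf v^{\sigma}$ means applying $\sigma$ to each coordinate. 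Since $\sigma^{n}=\mathrm{id}$, a direct computation gives $\Phi^{n}=A$ and $\Psi^{n}=(A^{T})^{-1}$, so that $A=I_k$ is equivalent to $\Phi^{n}=\mathrm{id}$ and also to $\Psi^{n}=\mathrm{id}$.

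First I would dispose of the degenerate case $a_0=0$. Then $\det A=\N_{q^n/q}(\det C_f)=0$, so $A\neq I_k$; on the other hand, since here $a_k=-1$ and hence the ``furthermore'' part of Theorem \ref{Gow} would force $\N_{q^n/q}(a_0)=(-1)^{n(k+1)}\neq 0$, the polynomial $f$ cannot have maximum kernel. Both conditions fail, so the equivalence holds vacuously, and henceforth I assume $a_0\neq 0$.

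For the main equivalence I would build an explicit $\F_q$-linear isomorphism $\ker f \xrightarrow{\ \sim\ } \mathrm{Fix}(\Psi):=\{\mathbf v:\Psi(\mathbf v)=\mathbf v\}$ via $x\mapsto \mathbf u(x)=(x,x^{\sigma},\ldots,x^{\sigma^{k-1}})^{T}$. The key computation is that $C_f^{T}\mathbf u(x)=(x^{\sigma},\ldots,x^{\sigma^{k-1}},\sum_i a_i x^{\sigma^i})^{T}$, so the fixed-point relation $\mathbf u(x)^{\sigma}=C_f^{T}\mathbf u(x)$ holds exactly when $x^{\sigma^k}=\sum_i a_i x^{\sigma^i}$, i.e. when $f(x)=0$; conversely, unwinding the shift relations shows that any $\Psi$-fixed vector has the form $\mathbf u(x)$ with $x\in\ker f$. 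Hence $\dim_{\F_q}\ker f=\dim_{\F_q}\mathrm{Fix}(\Psi)$. It then remains to invoke the semilinear form of Hilbert 90 (Galois descent) for $\F_{q^n}/\F_q$: a $\sigma$-semilinear bijection $\Psi$ of $V$ satisfies $\Psi^{n}=\mathrm{id}$ if and only if $\dim_{\F_q}\mathrm{Fix}(\Psi)=k$. One direction is elementary, since an $\F_q$-independent family of fixed vectors is automatically $\F_{q^n}$-independent (a minimal-length relation, hit by $\Psi$ and subtracted, shortens unless its coefficients lie in $\F_q$), so such a family has at most $k$ members and, if it has exactly $k$, it is an $\F_{q^n}$-basis fixed pointwise, forcing $\Psi^{n}=\mathrm{id}$. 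The reverse direction is the substantive descent statement, and I expect it to be the main obstacle, as it is the only place where the arithmetic of the extension genuinely enters. Stringing the equivalences together yields $A=I_k\iff\Psi^{n}=\mathrm{id}\iff\dim_{\F_q}\mathrm{Fix}(\Psi)=k\iff\dim_{\F_q}\ker f=k$, which is precisely maximum kernel.

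Finally, for the ``furthermore'' I would use $\Phi$, for which $\mathbf e_0$ is a cyclic vector: since $\Phi(e_j)=e_{j+1}$ for $j\leq k-2$, we get $\Phi^{j}(\mathbf e_0)=e_j$, so $\{\mathbf e_0,\Phi\mathbf e_0,\ldots,\Phi^{k-1}\mathbf e_0\}$ is the standard basis of $V$. As $A=\Phi^{n}$ is an ordinary $\F_{q^n}$-linear power of $\Phi$, it commutes with $\Phi$; hence, assuming $A\mathbf e_0=\mathbf e_0$, we obtain $A e_j=A\Phi^{j}\mathbf e_0=\Phi^{j}A\mathbf e_0=\Phi^{j}\mathbf e_0=e_j$ for every $j$, so $A$ fixes a basis and $A=I_k$. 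The converse implication is trivial, which completes the argument.
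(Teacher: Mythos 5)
The paper does not actually prove Theorem \ref{th:maxker}; it imports it from \cite{teoremone} and \cite{McGuireSheekey}, so there is no internal proof to measure you against. Judged on its own terms, your argument is correct and all the computations check out: $\det C_f=(-1)^{k-1}a_0$ so the $a_0=0$ case is genuinely degenerate on both sides (your appeal to the ``furthermore'' of Theorem \ref{Gow} works, though one can also just note that $f$ is then a $\sigma$-power of a polynomial of $\sigma$-degree $k-1$); the identities $\Phi^n=A$ and $\Psi^n=(A^T)^{-1}$ are right; the map $x\mapsto(x,x^\sigma,\dots,x^{\sigma^{k-1}})^T$ really is an $\F_q$-isomorphism $\ker f\to\mathrm{Fix}(\Psi)$ because the shift relations force every fixed vector to have that form; and your treatment of the ``furthermore'' clause via the cyclic vector $\mathbf e_0$ and the commutation $A\Phi^j=\Phi^jA$ is clean and complete. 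The one step you leave open --- that $\Psi^n=\mathrm{id}$ forces $\dim_{\F_q}\mathrm{Fix}(\Psi)=k$ --- is exactly the classical semilinear Hilbert~90 (Speiser's theorem, equivalently the vanishing of $H^1(\mathrm{Gal}(\F_{q^n}/\F_q),\mathrm{GL}_k)$), and it is needed for the implication ``$A=I_k\Rightarrow f$ has maximum kernel'', so it cannot be waved away; but it closes in a few lines. For $v\in V$ and $\lambda\in\F_{q^n}$ set $w_\lambda=\sum_{i=0}^{n-1}\lambda^{\sigma^i}\Psi^i(v)$; reindexing modulo $n$ shows $\Psi(w_\lambda)=w_\lambda$, and if a nonzero $\F_{q^n}$-functional $\phi$ killed all such $w_\lambda$ then $\sum_i\lambda^{\sigma^i}\phi(\Psi^i(v))=0$ for all $\lambda$, whence $\phi(v)=0$ for all $v$ by Artin's linear independence of the characters $\lambda\mapsto\lambda^{\sigma^i}$, a contradiction; so the fixed vectors span $V$ over $\F_{q^n}$ and an $\F_q$-basis of $\mathrm{Fix}(\Psi)$ has $k$ elements by your own independence lemma. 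With that paragraph added your proof is complete, and it is in the same spirit as the arguments in the cited sources, which likewise translate the matrix product into a semilinear operator whose fixed points are the roots of $f$.
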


Linearized polynomials attracted a lot of attention because of their applications to construct MRD codes \cite{John}, to study list decodability of rank metric codes \cite{raviv_2016,TrombZullo,wachter-zhe_2013}, cyclic subspace codes \cite{BEGR,Otal} and affine dispersers \cite{BK} and Elliptic curve discrete logarithm problem \cite{HKP}. 
Motivated by these applications, we further investigated the results on linearized trinomials with maximum kernel of \cite{McGuireMuller}. 
More precisely, we first refine their results for $\sigma$-polynomials proving the following.

\begin{theorem}\label{th:mainMcG}
Let $d,n$ and $s$ be positive integers with $\gcd(s,n)=1$ such that $d\geq 3$. Let denote by $\sigma$ the automorphism $x\in \mathbb{F}_{q^n}\mapsto x^{q^s}\in \mathbb{F}_{q^n}$.
\begin{enumerate}
    \item[(a)] If $n\leq d(d-1)$ and $d\nmid n$, then $\dim_{\mathbb{F}_q}(\ker(ax+bx^{\sigma}-x^{\sigma^d}))<d$, for any $a,b \in \mathbb{F}_{q^n}$.
    \item[(b)] If $n\leq d(d-1)$ and $d\mid n$, then $\dim_{\mathbb{F}_q}(\ker(ax+bx^{\sigma}-x^{\sigma^d}))=d$ with $a,b \in \mathbb{F}_{q^n}$ if and only if $b=0$ and $\N_{q^n/q^d}(a)=1$.
    \item[(c)] If $n=d(d-1)+1$, then $\dim_{\mathbb{F}_q}(\ker(ax+bx^{\sigma}-x^{\sigma^d}))=d$ with $a,b \in \mathbb{F}_{q^n}$ if and only if 
    \[ \left\{ \begin{array}{lll} \mathrm{N}_{q^n/q}(a)=(-1)^{d-1}, \\ b=-a^{\sigma f_1}\,\,\text{where}\,\, f_1=\sum_{i=0}^{d-1}\sigma^{id},\\ 
    d-1\,\, \text{is a power of}\,\, p.
    \end{array}\right. \]
\end{enumerate}
\end{theorem}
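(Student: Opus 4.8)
The plan is to route everything through the companion-matrix criterion of Theorem~\ref{th:maxker}. For the trinomial $f(x)=ax+bx^{\sigma}-x^{\sigma^d}$ the $\sigma$-companion matrix $C_f$ is the $d\times d$ matrix whose first $d-1$ columns realize the shift $\mathbf{e}_{j}\mapsto\mathbf{e}_{j+1}$ and whose last column is $(a,b,0,\ldots,0)^T$, so that $f$ has maximum kernel if and only if $C_fC_f^{\sigma}\cdots C_f^{\sigma^{n-1}}\mathbf{e}_0=\mathbf{e}_0$. First I would compute the partial products $v_m=C_f^{\sigma^{n-m}}\cdots C_f^{\sigma^{n-1}}\mathbf{e}_0$ through the recursion $v_{m+1}=C_f^{\sigma^{n-m-1}}v_m$, following the tools of \cite{McGuireMuller}. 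Since each $C_f^{\sigma^i}$ acts as the shift $\mathbf{e}_j\mapsto\mathbf{e}_{j+1}$ for $j<d-1$ and sends $\mathbf{e}_{d-1}\mapsto a^{\sigma^i}\mathbf{e}_0+b^{\sigma^i}\mathbf{e}_1$, the coefficient $c_j^{(m)}$ of $\mathbf{e}_j$ in $v_m$ is a weighted count of length-$m$ walks on the vertices $\{0,\dots,d-1\}$ that advance by one at each step except at vertex $d-1$, where the walk either resets to $0$, contributing the factor $a^{\sigma^{i}}$ attached to that step (an \emph{$a$-branch}), or jumps to $1$, contributing the corresponding factor $b^{\sigma^{i}}$ (a \emph{$b$-branch}). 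The maximum-kernel condition becomes $c_0^{(n)}=1$ and $c_j^{(n)}=0$ for $1\le j\le d-1$, and the entire analysis reduces to reading off these walk sums.

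For parts (a) and (b) the engine is a \emph{no-cancellation} principle: I would prove that, when $n\le d(d-1)$, distinct walks contribute distinct monomials in the $\sigma^i$-conjugates of $a$ and $b$. Two walks with different numbers of $a$- and $b$-branches already have different degrees, and among walks with equal branch-counts the length bound prevents their branch-times, hence their $\sigma$-exponents reduced modulo $n$, from coinciding. Granting this, no term of any $c_j^{(n)}$ can cancel another. A walk from $0$ back to $0$ of length $n$ requires $n-d=\alpha d+\beta(d-1)$ with $\alpha,\beta\ge0$, and the only walk whose weight carries no factor $b^{\sigma^i}$ is the pure-$a$ walk ($\beta=0$), which closes up exactly when $d\mid n$ and then has weight $\N_{q^n/q^d}(a)$ (here $\gcd(s,n)=1$ together with $d\mid n$ makes $\sigma^d$ generate $\mathrm{Gal}(\F_{q^n}\colon\F_{q^d})$). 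If $d\nmid n$ this term is absent, so by no-cancellation $c_0^{(n)}=1$ cannot hold, which is part (a). If $d\mid n$, no-cancellation forces every $b$-carrying term to vanish, that is $b=0$; with $b=0$ the only surviving walk is the purely cyclic one, which lands on $0$ and gives $c_0^{(n)}=\N_{q^n/q^d}(a)$ while $c_j^{(n)}=0$ for $j\ne0$, so maximum kernel is equivalent to $\N_{q^n/q^d}(a)=1$, which is part (b).

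For part (c) the threshold $n=d(d-1)+1$ is exactly where the no-cancellation principle breaks down, and this is the source of the extra conditions. Here $n-d=(d-1)^2$ and the equation $(d-1)^2=\alpha d+\beta(d-1)$ has the unique non-negative solution $\alpha=0$, $\beta=d-1$ (using $\gcd(d,d-1)=1$); hence the walk from $0$ to $0$ is unique and $c_0^{(n)}$ is the single monomial $a\prod_{j=1}^{d-1}b^{\sigma^{(d-1)j}}$. For the endpoints $j\ne0$, by contrast, several distinct walks now produce the same monomial, and the number of them is a binomial coefficient, so the conditions $c_j^{(n)}=0$ read as equations of the form $\sum_j\binom{d-1}{j}(\cdots)=0$ in characteristic $p$. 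Putting these together I expect to recover the relation $b=-a^{\sigma f_1}$ with $f_1=\sum_{i=0}^{d-1}\sigma^{id}$, the norm condition $\N_{q^n/q}(a)=(-1)^{d-1}$ (consistent with, and in part forced by, Theorem~\ref{Gow}), and, from the requirement that the intermediate binomial coefficients vanish modulo $p$, the condition that $d-1$ be a power of $p$; this last step uses the elementary fact that $\binom{d-1}{j}\equiv0\pmod p$ for every $0<j<d-1$ precisely when $d-1$ is a power of $p$. The converse direction is the verification that these three conditions make every $c_j^{(n)}$ attain its prescribed value.

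The step I expect to be the main obstacle is the no-cancellation principle together with its sharp dependence on the number $d(d-1)$: one must show that below the threshold distinct walks give distinct monomials in the conjugates of $a$ and $b$, which is a statement about when the partial sums of the cycle-lengths $d$ and $d-1$ can agree modulo $n$, and this is exactly where the numerology $n\le d(d-1)$ is consumed. For part (c) the analogous difficulty is to compute the multiplicities with which walks collide at the threshold, to identify them as binomial coefficients, and thereby to pinpoint where the hypothesis that $d-1$ is a power of $p$ enters; checking the converse simultaneously for all indices $j$ is the remaining technical burden.
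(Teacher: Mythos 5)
There is a genuine gap, and it sits at the heart of your argument for (a) and (b): the ``no-cancellation principle'' conflates formal non-vanishing with non-vanishing at specific field elements. The conditions $c_j^{(n)}=\delta_{j,0}$ coming from Theorem~\ref{th:maxker} are equations in $\F_{q^n}$ evaluated at \emph{particular} $a,b$, not identities of formal polynomials. Even granting that distinct walks contribute distinct monomials in the $\sigma$-conjugates of $a$ and $b$, a sum of distinct monomials can perfectly well equal $0$ or $1$ at specific field elements (already $a+ab$, a sum of two distinct monomials, vanishes whenever $b=-1$). So in (a) the inference ``the pure-$a$ term is absent, hence $c_0^{(n)}=1$ cannot hold'' is a non sequitur, and in (b) ``no-cancellation forces every $b$-carrying term to vanish, that is $b=0$'' does not follow. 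To force $b=0$ you must exhibit a coordinate whose entry reduces to a \emph{single} monomial of the form (unit power of $a$) times (nontrivial power of $b$); isolating such coordinates is exactly the delicate bookkeeping carried out via the recursion of \cite{McGuireMuller} (the analogues of Propositions~\ref{prop:recursive}, \ref{propM1} and \ref{prop:Mlk} in this paper), and your sketch replaces it with an argument that proves strictly less than what is needed.

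Part (c) has a second, related defect: the claim that at $n=d(d-1)+1$ ``several distinct walks now produce the same monomial'' with binomial multiplicities is false at the monomial level. Walks with equal branch-counts but different branch-times carry different multisets of $\sigma$-exponents (even modulo $n$), hence distinct monomials; the binomial coefficients $\binom{d-1}{r}$ emerge only \emph{after} imposing a relation between $a$ and $b$ of the type $a^{\sigma^d}b=a^{\sigma}b^{\sigma^d}$ (compare Lemma~\ref{trianglepascal}, whose hypothesis is precisely such a relation), and that relation must itself first be extracted from the other equations of the system. So your route to the condition that $d-1$ be a power of $p$ is unsubstantiated, and the converse direction is deferred wholesale. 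Your computation of the unique closed walk and of $c_0^{(n)}=a\prod_{j=1}^{d-1}b^{\sigma^{(d-1)j}}$ is correct, but it is only one equation of the system. Note finally that you are in effect re-proving \cite[Theorem 1.1]{McGuireMuller} from scratch, whereas the paper obtains Theorem~\ref{th:mainMcG} in a few lines: set $q'=q^s$, apply the $s=1$ statement (Theorem~\ref{McGuireMullers=1}) to the kernel $U'$ over $\F_{q'}$, and descend to $\F_{q^n}$ via Lemma~\ref{lemma:LTZ}, which gives $\dim_{\F_q}(U'\cap\F_{q^n})\leq\dim_{\F_{q'}}(U')$; the hypothesis $\gcd(s,n)=1$, which your argument barely uses, is exactly what makes this reduction and the identification of the norm conditions work.
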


Then we characterize $\sigma$-trinomials with maximum kernel giving closed formulas involving their coefficients. 

\begin{theorem}\label{th:main}
Let $d,n$ and $s$ be positive integers with $\gcd(s,n)=1$ such that $d\geq 3$ and $n=d(d-1)+g$, for some $g\in \{1,\ldots,d-1\}$. Let denote by $\sigma$ the automorphism $x\in \mathbb{F}_{q^n}\mapsto x^{q^s}\in \mathbb{F}_{q^n}$.
Then $\dim_{\mathbb{F}_q}(\ker(ax+bx^{\sigma}-x^{\sigma^d}))=d$ with $a,b \in \mathbb{F}_{q^n}$ if and only if $a$ and $b$ satisfy the following system
    %\begin{small}
    $$
\sum_{\substack{i_0,\ldots,i_{t}=0 \\ i_0+\ldots+i_t=d-1-t}}^{d-1-t}  \left( \prod_{j_0=1}^{i_0} a^{\sigma^{d(d-1-j_0)+g}} \right) \left( \prod_{j_1=1}^{i_1} a^{\sigma^{d(d-i_0-j_1-2)+g+1}} \right)\cdot \ldots \cdot $$
$$\left( \prod_{j_{t-1}=1}^{i_{t-1}} a^{\sigma^{d(d-i_0-\ldots-i_{t-2} -j_{t-1} -t)+g+t-1}} \right) \cdot \left( \prod_{j_{t}=1}^{i_t} a^{\sigma^{d(i_t-j_t)+t+g}} \right) \cdot$$ $$\left( \prod_{u=0}^{t-1} b^{\sigma^{d(d-2-i_0-\cdots-i_u-u)+u+g}} \right)$$
$$=\left\{ \begin{array}{lll} (-1)^v b^{\frac{\sigma^{v}-1}{\sigma-1}} a^{-\frac{\sigma^{v+1}-1}{\sigma-1}} & \text{if}\,\, t=d-g+v, \,\, 0\leq v\leq 
g-1,\\
(-1)^g a^{-\frac{\sigma^{g}-1}{\sigma-1}}b^{\frac{\sigma^{g}-1}{\sigma-1}}& \text{if}\,\, t=0,\\ 
0 & \text{otherwise},\end{array}\right. 
$$
    %\end{small}
\end{theorem}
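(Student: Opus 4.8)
The plan is to start from the criterion of Theorem~\ref{th:maxker}: the trinomial $f(x)=ax+bx^{\sigma}-x^{\sigma^d}$ has maximum kernel precisely when $C_fC_f^{\sigma}\cdots C_f^{\sigma^{n-1}}\mathbf{e}_0=\mathbf{e}_0$, where $C_f$ is the $\sigma$-companion matrix whose last column is $(a,b,0,\ldots,0)^T$. The first step is to read this matrix product combinatorially. Since each factor $C_f^{\sigma^i}$ is the sum of the subdiagonal shift together with the two entries $a^{\sigma^i}$ and $b^{\sigma^i}$ in its last column, the $(r,0)$ entry of the product expands as a sum over walks $r=p_0,p_1,\ldots,p_n=0$ on the states $\{0,1,\ldots,d-1\}$, in which each step is either a \emph{shift} $p_{i+1}=p_i-1$ (contributing $1$), an \emph{$a$-jump} from state $0$ to state $d-1$ (contributing $a^{\sigma^i}$), or a \emph{$b$-jump} from state $1$ to state $d-1$ (contributing $b^{\sigma^i}$). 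Thus the $r$-th coordinate of $C_fC_f^{\sigma}\cdots C_f^{\sigma^{n-1}}\mathbf{e}_0$ is a sum of monomials in the $a^{\sigma^i}$ and $b^{\sigma^i}$, one per admissible walk, and the whole problem becomes the explicit evaluation of these $d$ coordinate sums.

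The second step is to exploit the tight hypothesis $n=d(d-1)+g$ with $1\le g\le d-1$ to control the walks. Writing $\alpha$ and $\beta$ for the numbers of $a$- and $b$-jumps in a walk from $r$ to $0$, counting steps and net displacement forces $d\alpha+(d-1)\beta=n-r$. Because $\gcd(d,d-1)=1$ and $g$ lies in the narrow range $1\le g\le d-1$, I would show this Diophantine equation has, for each coordinate $r$, exactly one feasible pair $(\alpha,\beta)$ (with $\alpha+\beta\in\{d-1,d\}$), the sole exception being $r=g$, which admits the two extreme solutions $(d-1,0)$ and $(0,d)$; these are exactly the pure-$a$ and pure-$b$ walks and they are responsible for the distinguished case $t=0$. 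This feasibility analysis simultaneously fixes which coordinate each value of the index $t$ records and accounts for the three-way split of the right-hand side.

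Next I would reorganise each coordinate sum into the stated form. Recording, between consecutive $b$-jumps, how many $a$-jumps occur attaches to each walk a composition $i_0+\cdots+i_t=\alpha$ of the $a$-jumps into the $t+1=\beta+1$ slots determined by the $\beta=t$ $b$-jumps; this is precisely the summation set $i_0+\cdots+i_t=d-1-t$ of the theorem. The Frobenius exponents attached to each jump are then nothing but the step indices $i$ at which the jumps occur, and these are computed by adding the lengths of the descending runs that separate the jumps; carrying out this accounting yields the nested exponents $d(d-1-j_0)+g$, $d(d-i_0-j_1-2)+g+1,\ldots$ and the $b$-exponents $d(d-2-i_0-\cdots-i_u-u)+u+g$ displayed in the statement. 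Finally, the inhomogeneous right-hand sides are obtained by peeling off the one distinguished degenerate walk in each such coordinate, reducing its exponents modulo $n$ via $\sigma^n=\mathrm{id}$ and invoking the norm relation $\N_{q^n/q}(a)=(-1)^{nd}\N_{q^n/q}(-1)$ furnished by Theorem~\ref{Gow}; the telescoping products $1+\sigma+\cdots+\sigma^{v-1}=\frac{\sigma^{v}-1}{\sigma-1}$ together with the parity of the number of wrap-around jumps produce the factors $b^{(\sigma^{v}-1)/(\sigma-1)}$, $a^{-(\sigma^{v+1}-1)/(\sigma-1)}$ and the signs $(-1)^v$, $(-1)^g$.

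I expect the main obstacle to be the bookkeeping of the Frobenius exponents in the third step: translating the step index of each jump into the exact power of $\sigma$, keeping the offsets $+g$, $+u$ and the running sums $i_0+\cdots+i_u$ correctly aligned across all $t+1$ slots, and then matching the reduced exponents of the degenerate walks against the closed forms on the right-hand side. This is where an induction on the number $t$ of $b$-jumps, peeling one slot at a time, seems unavoidable, and where the interplay between the cyclic identification $\sigma^n=\mathrm{id}$ and the norm condition of Theorem~\ref{Gow} must be handled with care to recover the correct signs.
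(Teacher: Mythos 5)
Your proposal follows essentially the same route as the paper: your walk expansion of $C_fC_f^{\sigma}\cdots C_f^{\sigma^{n-1}}\mathbf{e}_0$ is exactly the paper's unrolling of the recursion $M_{l,k}=M_{l,k-d}a^{\sigma^{k-1}}+M_{l,k-d+1}b^{\sigma^{k-1}}$ (their tree of recursive indexes), your compositions $i_0+\cdots+i_t=d-1-t$ reproduce their coefficients $c_{d-1,t}$, and your feasibility analysis of $d\alpha+(d-1)\beta=n-r$ is their description of which $M_{l,k}$ with $k\leq d-1$ are nonzero. The one detail you misdescribe is how the right-hand sides arise: in the paper they come from back-substituting through the resulting triangular system in the $c_{d-1,t}$ (which is what produces the signs $(-1)^v$ and the telescoping exponents), not from the norm relation of Theorem~\ref{Gow}, which plays no role in this proof.
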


As a byproduct, in Corollary \ref{cor:neccond} we also get necessary conditions on the coefficients of the trinomial to have maximum kernel.
Then we use Theorem \ref{th:main} to introduce a new class of $\sigma$-linearized trinomials with maximum kernel when $q$ is even and we determine such equations when $d=3$ and $d=4$.
Finally we apply these results to investigate rank metric codes introduced in \cite{CsMPZh}, to construct examples of quasi-subfield polynomials and of cyclic subspace codes. These constructions of subspace codes yield new solutions to the conjecture posed in \cite{Trautmann}.

The paper is organized as follows. Section \ref{sec:thmainMcG} regards the proof of Theorem \ref{th:mainMcG}.
Section \ref{sec:thmain} is devoted to the proof of Theorem \ref{th:main} and in Section \ref{sec:neccond} we pointed out some necessary conditions on the coefficients of the trinomial to have maximum kernel. In Section \ref{sec:newfamily} we apply Theorem \ref{th:main} to obtain a new family of $\sigma$-linearized trinomials with maximum kernel.
In Section \ref{sec:d=3,4} we give explicit conditions on the coefficient of a $\sigma$-linearized trinomials when the $\sigma$-degree is either $3$ or $4$.
Section \ref{sec:applications} deals with the applications in rank metric codes, quasi-subfield polynomials and cyclic subspace codes.
We conclude the paper listing some possible open problems in Section \ref{sec:final}.

\section{Proof of Theorem \ref{th:mainMcG}}\label{sec:thmainMcG}

Let $d,n$ and $s$ be positive integers with $\gcd(s,n)=1$ such that $d\geq 3$. Let denote by $\sigma$ the automorphism $x\in \mathbb{F}_{q^n}\mapsto x^{q^s}\in \mathbb{F}_{q^n}$.
Let $L(x)=-x^{\sigma^d}+bx^\sigma+ax \in \mathcal{L}_{n,q,\sigma}$.
When $s=1$, Theorem \ref{th:mainMcG} is \cite[Theorem 1.1]{McGuireMuller}.
More precisely,

\begin{theorem} \label{McGuireMullers=1}
Let $d,n$ be positive integers such that $d\geq 3$.
\begin{enumerate}
    \item[(a)] If $n\leq (d-1)d$ and $d$ does not divide $n$, then there is no polynomial $L=x^{q^d}-bx^q-ax$ with $a,b \in \F_{q^n}$ that splits completely over $\F_{q^n}$.
    \item[(b)] Let $n=id$ with $i \in \{1,\ldots,d-1\}$. Let $L=x^{q^d}-bx^q-ax \in \F_{q^n}[x]$. Then $L$ has $q^d$ roots in $\F_{q^n}$ if and only if $a^{1+q^d+\cdots+q^{(i-1)d}}$ and $b=0$.
    \item[(c)] If $n=d(d-1)+1$, then $\dim_{\mathbb{F}_q}(\ker(ax+bx^q-x^{q^d}))=d$ with $a,b \in \mathbb{F}_{q^n}$ if and only if 
    \[ \left\{ \begin{array}{lll} \mathrm{N}_{q^n/q}(a)=(-1)^{d-1}, \\ b=-a^{q f_1}\,\,\text{where}\,\, f_1=\sum_{i=0}^{d-1}q^{id},\\ 
    d-1\,\, \text{is a power of}\,\, p.
    \end{array}\right. \]
    \end{enumerate}
\end{theorem}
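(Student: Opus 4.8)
The plan is to route all three parts through the companion-matrix criterion of Theorem~\ref{th:maxker}, applied with $k=d$. For $f(x)=ax+bx^{q}-x^{q^{d}}$ the matrix $C_f$ is the $d\times d$ companion matrix whose only nonzero entries are the subdiagonal $1$'s and the last column $(a,b,0,\dots,0)^{T}$; as an operator on the basis $\mathbf{e}_{0},\dots,\mathbf{e}_{d-1}$ it \emph{shifts} $\mathbf{e}_{j}\mapsto\mathbf{e}_{j+1}$ for $j<d-1$ and sends $\mathbf{e}_{d-1}\mapsto a\mathbf{e}_{0}+b\mathbf{e}_{1}$. Writing $P:=C_fC_f^{q}\cdots C_f^{q^{n-1}}$ and expanding $P\mathbf{e}_0$ as in \eqref{eq:Cfrelshort}, every scalar entry of $P\mathbf{e}_0$ becomes a sum of monomials indexed by walks on the state set $\{0,\dots,d-1\}$: a shift contributes $1$, while from state $d-1$ one may take an \emph{$a$-move} (to state $0$, weight a conjugate of $a$) or a \emph{$b$-move} (to state $1$, weight a conjugate of $b$), the conjugating exponent being fixed by the position of the factor in the product. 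The first step is to record the resulting balance equation: a walk reaching the $\mathbf{e}_{j}$-component uses $n_a$ $a$-moves and $n_b$ $b$-moves with
\[
d\,n_a+(d-1)\,n_b=n-j,\qquad j\in\{0,1,\dots,d-1\}.
\]
By Theorem~\ref{th:maxker}, $f$ has maximum kernel if and only if the $j=0$ component equals $1$ and the components $j=1,\dots,d-1$ vanish.

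Writing $r=n_a+n_b$ one has $n-j=(d-1)r+n_a$, so the hypothesis $n\le d(d-1)$ forces $r\le d$ and makes every path-sum finite and explicit. For part~(b), with $n=id$ and $1\le i\le d-1$, reducing the $j=0$ equation modulo $d$ gives $n_b\equiv 0\pmod d$; since $r\le d$, only pure $a$-patterns survive, apart from one all-$b$ pattern arising in the boundary case $i=d-1$, which the off-diagonal equations discard. This forces $b=0$ and reduces $f$ to the binomial $ax-x^{q^{d}}$, for which Theorem~\ref{th:maxker} yields maximum kernel exactly when $\N_{q^{n}/q^{d}}(a)=a^{1+q^{d}+\cdots+q^{(i-1)d}}=1$; conversely these two conditions give $P\mathbf{e}_0=\mathbf{e}_0$.

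For part~(a), with $n\le d(d-1)$ and $d\nmid n$, the same congruence now gives $n_b\equiv -n\not\equiv 0\pmod d$, so every walk reaching state $0$ must use at least one $b$-move. I would enumerate the finitely many admissible move-patterns (bounded by $r\le d$) and impose simultaneously the $j=0$ equation together with the vanishing of the off-diagonal components. I expect this to produce an overdetermined system in $(a,b)$: the monomials forced into the $\mathbf{e}_0$-equation cannot be matched while the $\mathbf{e}_{j}$-equations ($j\ge1$) are kept zero, so no pair $(a,b)$ satisfies all of them, which is the claimed nonexistence. Making this elimination clean — isolating exactly which patterns interfere — is the first genuinely technical point.

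The heart is part~(c), where $n=d(d-1)+1$. Here the $j=0$ equation $d\,n_a+(d-1)n_b=n$ has the \emph{unique} nonnegative solution $(n_a,n_b)=(1,d-1)$, and since state $0$ is reached only through an $a$-move, this move must be the final step, so the contributing walk is rigid; its product of conjugate-weights gives a single monomial relation between $a$ and $b$. Processing the off-diagonal equations then pins down the explicit form $b=-a^{q f_1}$ with $f_1=\sum_{i=0}^{d-1}q^{id}$, and back-substitution into the diagonal relation yields $\N_{q^{n}/q}(a)=(-1)^{d-1}$, the sign coming from the $d-1$ factors $b=-a^{q f_1}$. The arithmetic obstruction surfaces in these off-diagonal equations: for $j\ge1$ the balance equation admits several patterns (for instance $(n_a,n_b)=(d-1,0)$ and $(0,d)$ when $j=1$), realized by interleavings whose number is a binomial coefficient $\binom{d-1}{k}$; after substituting $b=-a^{q f_1}$ each such sum becomes an integer multiple of a fixed power of $a$, so it vanishes only if that integer is $0$ in characteristic $p$. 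By Lucas' theorem the middle coefficients $\binom{d-1}{k}$, $0<k<d-1$, all vanish modulo $p$ precisely when $d-1$ is a power of $p$, which is the third condition. This last bookkeeping — matching the weights and multiplicities of all off-diagonal walks and extracting from their simultaneous vanishing the clean statement that $d-1$ is a power of $p$ — is the main obstacle, and I would organize it by first reducing every path-weight to a power of $a$ via $b=-a^{q f_1}$ and then reading off the surviving coefficients through Lucas' theorem.
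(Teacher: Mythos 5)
First, a point of comparison: the paper does not actually prove Theorem~\ref{McGuireMullers=1} --- it quotes it verbatim from \cite{McGuireMuller} and uses it as a black box, so the only in-paper analogue of your argument is the $M_{l,k}$ machinery of Section~3, which proves the generalization Theorem~\ref{th:main}. Your walk-on-states reading of \eqref{eq:Cfrelshort} is exactly that machinery: your balance equation $d\,n_a+(d-1)\,n_b=n-j$ is the index bookkeeping of Proposition~\ref{prop:recursive} and Figure~\ref{grafo}, so the framework is the right one. Your part~(c) is essentially sound and matches how Theorem~\ref{th:main} specializes at $g=1$: the $\mathbf{e}_0$-component is a single rigid walk, the component that is a pure power of $a$ yields $b=-a^{qf_1}$, and Lucas' theorem on the $\binom{d-1}{k}$ is indeed the source of the condition that $d-1$ is a $p$-power --- though you should say explicitly that collapsing the interleavings to $\binom{d-1}{k}$ times a single monomial first requires deriving the commutation $a^{q^d}b=a^qb^{q^d}$ from $b=-a^{qf_1}$, as in Lemmas~\ref{trianglepascal} and~\ref{finalteorecurs}.

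The genuine gap is in parts (a) and (b). For (a) you stop at ``I expect this to produce an overdetermined system,'' which is not a proof of nonexistence; for (b) the sentence ``this forces $b=0$'' follows from nothing you have established, since the $j=0$ analysis says nothing about $b$ and you never exhibit an equation that kills it. The missing observation, which closes both parts at once, is the following. Assume $n\geq d$ and let $j^*\in\{1,\dots,d-1\}$ be the representative of $n$ modulo $d-1$. The all-$b$ walk ends at state $j^*$ with $n_a=0$ and $n_b=(n-j^*)/(d-1)\geq 1$; any competing pattern ending at $j^*$ would satisfy $d n_a+(d-1)n_b=n-j^*\equiv 0 \pmod{d-1}$ with $n_a\geq 1$, hence $n_a\equiv 0\pmod{d-1}$, hence $n_a\geq d-1$ and $n\geq d(d-1)+j^*>d(d-1)$, contradicting the hypothesis. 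So the $\mathbf{e}_{j^*}$-component of $P\mathbf{e}_0$ is a \emph{single} monomial $b^{q^{e_1}+\cdots+q^{e_{n_b}}}$, and its required vanishing forces $b=0$. With $b=0$ every surviving walk uses only $a$-moves, so the $\mathbf{e}_0$-component is an empty sum (hence $0\neq 1$) unless $d\mid n$ --- which is part (a) --- and when $n=id$ it equals a conjugate of $a^{1+q^d+\cdots+q^{(i-1)d}}$ while all off-diagonal components are empty, which is part (b). Without this (or an equivalent elimination), your write-up of (a) and (b) is an outline rather than a proof.
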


Theorem \ref{th:mainMcG} is a consequence of Theorem \ref{McGuireMullers=1}, Theorem \ref{th:maxker} and of the following lemma.

\begin{lemma}\cite[Lemma 3.2]{LTZ}\label{lemma:LTZ}
Let $q$ be a prime power, and $\delta, n,s$ positive integers such that $\gcd(s,n)=1$ and $\delta <n$.
Let $U'$ be an $\F_{q^s}$subspace of $\mathbb{F}_{q^{sn}}$ with $\dim_{\F_{q^s}}(U')=\delta$ and let $U=U'\cap \F_{q^n}$.
Then $\dim_{\fq}(U)\leq \delta$.
\end{lemma}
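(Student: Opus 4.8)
The plan is to compare the two dimensions by passing from $U$ to its $\F_{q^s}$-span inside the ambient field $\F_{q^{sn}}$. First I note that $U=U'\cap\F_{q^n}$ is an $\F_q$-subspace of $\F_{q^n}$, since both $U'$ and $\F_{q^n}$ are stable under multiplication by $\F_q$. Set $m=\dim_{\F_q}(U)$ and fix an $\F_q$-basis $u_1,\dots,u_m$ of $U$; the goal is to prove $m\le\delta$. Viewing $u_1,\dots,u_m$ as elements of $\F_{q^{sn}}$, I would consider the $\F_{q^s}$-subspace $W=\langle u_1,\dots,u_m\rangle_{\F_{q^s}}$ they generate. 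Because each $u_i\in U\subseteq U'$ and $U'$ is an $\F_{q^s}$-subspace, we have $W\subseteq U'$, and hence $\dim_{\F_{q^s}}(W)\le\dim_{\F_{q^s}}(U')=\delta$.

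The crux is therefore to show that $\dim_{\F_{q^s}}(W)=m$, i.e.\ that the $\F_q$-independent family $u_1,\dots,u_m$ remains $\F_{q^s}$-linearly independent in $\F_{q^{sn}}$. This is precisely the statement that the subfields $\F_{q^s}$ and $\F_{q^n}$ are linearly disjoint over $\F_q$, which holds here because $\gcd(s,n)=1$ gives $\F_{q^s}\cap\F_{q^n}=\F_q$ together with $[\F_{q^{sn}}:\F_{q^n}]=s=[\F_{q^s}:\F_q]$. To prove the independence directly I would argue by Galois descent: let $\tau\colon x\mapsto x^{q^n}$, which fixes $\F_{q^n}$ pointwise (so fixes every $u_i$) and, because $\gcd(s,n)=1$, restricts to a generator of $\mathrm{Gal}(\F_{q^s}\colon\F_q)$. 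Suppose there were a nontrivial $\F_{q^s}$-relation $\sum_i c_i u_i=0$ of minimal support, normalized after relabeling so that $c_1=1$. If all $c_i\in\F_q$ this contradicts the $\F_q$-independence of the $u_i$; otherwise some $c_j\notin\F_q$, so $\tau^k(c_j)\ne c_j$ for some $k$. Applying $\tau^k$ and subtracting yields $\sum_i(c_i-\tau^k(c_i))u_i=0$, a relation whose support is strictly smaller (the $u_1$-coefficient vanishes) yet nonzero (the $u_j$-coefficient survives), contradicting minimality.

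Combining the two steps gives $m=\dim_{\F_{q^s}}(W)\le\dim_{\F_{q^s}}(U')=\delta$, which is the desired bound $\dim_{\F_q}(U)\le\delta$. I expect the main obstacle to be the middle step, the preservation of linear independence when extending scalars from $\F_q$ to $\F_{q^s}$; everything else is bookkeeping. The hypothesis $\gcd(s,n)=1$ is exactly what makes this step work, since it forces $\F_{q^s}$ and $\F_{q^n}$ to meet only in $\F_q$ and forces $\tau$ to act as a full generator of $\mathrm{Gal}(\F_{q^s}\colon\F_q)$; the assumption $\delta<n$ only guarantees that $U'$ is a proper $\F_{q^s}$-subspace and is not needed for the inequality itself.
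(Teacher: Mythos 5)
Your proof is correct: the reduction to showing that an $\F_q$-independent family in $\F_{q^n}$ stays $\F_{q^s}$-independent inside $\F_{q^{sn}}$ is exactly the right crux, and your Galois-descent argument for that step (using that $x\mapsto x^{q^n}$ fixes $\F_{q^n}$ while restricting to a generator of $\mathrm{Gal}(\F_{q^s}\colon\F_q)$ because $\gcd(s,n)=1$) is sound, including the minimal-support contradiction. The paper itself offers no proof of this lemma — it is imported verbatim from \cite[Lemma 3.2]{LTZ} — and your argument is essentially the standard linear-disjointness proof given in that reference, so there is nothing to flag beyond noting, as you do, that the hypothesis $\delta<n$ plays no role in the inequality.
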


\emph{Proof of Theorem \ref{th:mainMcG}}\\
Let $q'=q^s$, denote by $U'$ the kernel of $f(x)$ in $\mathbb{F}_{q'^n}$ and by $U$ the kernel of $f(x)$ in $\mathbb{F}_{q^n}$, that is $U=U'\cap \F_{q^n}$.
By Theorem \ref{Gow} $\dim_{\F_{q'}}(U')\leq d$ and Lemma \ref{lemma:LTZ} implies that if $\dim_{\F_{q}}(U)=d$ then $\dim_{\F_{q'}}(U')=d$.

\begin{itemize}
    \item[(a)] If $n\leq d(d-1)$ and $d\nmid n$, then $\dim_{\F_{q'}}(U')<d$ by (a) of Theorem \ref{McGuireMullers=1} and Lemma \ref{lemma:LTZ} implies $\dim_{\F_q}(U)<d$.
   \item[(b)] If $n\leq d(d-1)$, $d\mid n$ and $\dim_{\F_q}(U)=d$, then $\dim_{\F_{q'}}(U')=d$ and by (b) of Theorem \ref{McGuireMullers=1} $b= 0$ and $\N_{q'^{n}/q'}(a)=\N_{q^n/q}(a)= 1$, since $\gcd(s,n)=1$. The converse trivially holds, see e.g.\ \cite[Corollary 3.5]{teoremone}. 
   \item[(c)] Let $n=d(d-1)+1$  and assume that $f(x)$ has maximum kernel, that is $\dim_{\fq}(U)=d$ and hence $\dim_{\F_{q'}}(U')=d$. By (c) of Theorem \ref{McGuireMullers=1} we have
  \[ \left\{ \begin{array}{lll} \N_{q'^n/q'}(a)=\N_{q^n/q}(a)=(-1)^{d-1},\\
  b=-a^{q'\sum_{i=0}^{d-1}q'^{id}},\\
  d-1\,\,\text{is a power of}\,\,p.
  \end{array}\right.\]
  Conversely, suppose that \[ \left\{ \begin{array}{lll} 
  \N_{q^n/q}(a)=(-1)^{d-1},\\
  b=-a^{q'\sum_{i=0}^{d-1}q'^{id}},\\
  d-1\,\,\text{is a power of}\,\,p,
  \end{array}\right.\]
  then by (c) of Theorem \ref{McGuireMullers=1} $\dim_{\F_{q'}}(U')=d$.
  Since $f(x)$, seen as an $\F_{q'}$-linearized polynomial, has maximum kernel by Theorem \ref{th:maxker} 
  \[ A_n=C_LC_L^{q'}\cdot\ldots\cdot C_L^{q'^{n-1}}=I_d. \]
  As $C_L\in \F_{q^n}^{d\times d}$, $A_n$ coincides with $C_L C_L^{\sigma} \cdot \ldots \cdot C_L^{\sigma^{n-1}}$ and  from Theorem \ref{th:maxker} follows that the $\F_q$-linearized polynomial $f(x)$ has maximum kernel. \qed
\end{itemize}

\section{Proof of Theorem \ref{th:main}}\label{sec:thmain}

We divide two subsections: in the first we prove some preliminary results and we develop the machinery we use, and in the last subsection we show Theorem \ref{th:main}.

\subsection{Preliminaries results}

The $\sigma$-companion matrix $C_L$ of $L(x)=-x^{\sigma^d}+bx^\sigma+ax$ (defined as in Theorem \ref{th:maxker}) is
$$C_L=\left( \begin{matrix} 
0 & 0 & \cdots & 0 & a \\
1 & 0 & \cdots & 0 & b \\
0 & 1 & \cdots & 0 & 0 \\
\vdots & \vdots & \ddots & \vdots & \vdots \\
0 & 0 & \cdots & 1 & 0 
\end{matrix}
\right) \in \mathbb{F}_{q^n}^{d\times d}
.$$
We define $A_n=C_LC_L^\sigma \cdots C_L^{\sigma^{n-1}}$, where $C^\sigma$ denotes the matrix $C$ in which $\sigma$ is applied to each of its entries. By Theorem \ref{th:maxker}, $\dim_{\F_q}(\ker(L))=d$ if and only if $A_n=I_d$.

Let $M_{l,k}$ denote the $(l,d)$-entry of $A_k=C_LC_L^\sigma \cdots C_L^{\sigma^{k-1}}$, for $l\in \{1,\ldots,d\}$. 
The recursive relation of the next result will play a crucial role in the paper.

\begin{proposition}\label{prop:recursive}
Set $M_{l,l-d}=1$, and $M_{l,k}=0$ for $k \leq 0$ and $k \neq l-d$. 
\begin{itemize}
    \item For every $k \geq d$, the $(l,j)$-entry of $A_k$ is $M_{l,k-d+j}$.
    \item For every $1\leq l \leq d$ and $k \geq 1$, the following relation holds
\begin{equation} \label{recurvisel}
M_{l,k}=M_{l,k-d}a^{\sigma^{k-1}}+M_{l,k-d+1}b^{\sigma^{k-1}}.\end{equation}
\end{itemize}
\end{proposition}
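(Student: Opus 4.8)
The plan is to use the factorization $A_k = A_{k-1}\,C_L^{\sigma^{k-1}}$, valid for every $k\ge 1$ with the convention that the empty product $A_0$ equals $I_d$, together with the extreme sparsity of the companion matrix, and then to run a single induction on $k$ that proves both assertions simultaneously.

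First I would record how right multiplication by $C_L^{\sigma^{k-1}}$ acts on columns. For $1\le j\le d-1$ the $j$-th column of $C_L^{\sigma^{k-1}}$ is the standard basis vector $\mathbf{e}_{j+1}$, while its last column is $a^{\sigma^{k-1}}\mathbf{e}_1+b^{\sigma^{k-1}}\mathbf{e}_2$. Hence multiplying any matrix $B$ on the right by $C_L^{\sigma^{k-1}}$ shifts column $j+1$ of $B$ into position $j$ (for $1\le j\le d-1$) and places $a^{\sigma^{k-1}}\cdot(\text{column }1)+b^{\sigma^{k-1}}\cdot(\text{column }2)$ into the last column. Applied to $B=A_{k-1}$ this gives, entrywise,
\[(A_k)_{l,j}=(A_{k-1})_{l,j+1}\ (1\le j\le d-1),\qquad (A_k)_{l,d}=a^{\sigma^{k-1}}(A_{k-1})_{l,1}+b^{\sigma^{k-1}}(A_{k-1})_{l,2}.\]

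The core step is then to prove, by induction on $k\ge 0$, the Hankel-type identity $(A_k)_{l,j}=M_{l,k-d+j}$ for all $1\le j\le d$, where $M$ carries the boundary conventions of the statement. The base case $k=0$ is exactly the assertion $A_0=I_d$: for $1\le j\le d$ the index $j-d$ is at most $0$, so the conventions $M_{l,l-d}=1$ and $M_{l,m}=0$ otherwise force $M_{l,j-d}=\delta_{l,j}=(I_d)_{l,j}$. For the inductive step, the first column relation gives $(A_k)_{l,j}=(A_{k-1})_{l,j+1}=M_{l,k-d+j}$ for $1\le j\le d-1$ directly from the inductive hypothesis, while the second relation, fed with the hypothesis at columns $1$ and $2$ of $A_{k-1}$, yields
\[(A_k)_{l,d}=a^{\sigma^{k-1}}M_{l,k-d}+b^{\sigma^{k-1}}M_{l,k-d+1}.\]
Since $(A_k)_{l,d}=M_{l,k}$ by definition, this closes the induction (it is the case $j=d$) and, read as an equation among the $M$'s, is precisely the recursion \eqref{recurvisel} for every $k\ge 1$. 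Restricting the proved identity to $k\ge d$ — the range in which every index $k-d+j$ with $1\le j\le d$ is positive, hence corresponds to a genuine matrix entry rather than a boundary value — gives the first bullet.

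I expect the only delicate point to be the choice and verification of the boundary values: the conventions $M_{l,l-d}=1$ and $M_{l,m}=0$ for $m\le 0$, $m\neq l-d$, must be set up precisely so that the sequence $(M_{l,\cdot})$ encodes the identity matrix at $k=0$ and, via one application of the column relations, the companion matrix at $k=1$, after which the recursion propagates everything automatically. Once that bookkeeping is pinned down, the remainder is the routine column-shift induction sketched above.
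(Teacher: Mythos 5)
Your proof is correct and follows essentially the same route as the paper: both rest on the factorization $A_k=A_{k-1}C_L^{\sigma^{k-1}}$ and the shift action of the sparse companion matrix. The only difference is organizational — you run a single induction from $A_0=I_d$ that absorbs the boundary conventions into the base case, whereas the paper treats $k\geq d$ and $k\leq d-1$ by separate direct computations; your packaging is slightly cleaner but the underlying argument is identical.
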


\begin{proof}
First note that when multiplying a row $\mathbf{v}=(v_1,\ldots,v_d)$ by a Frobenius of the $\sigma$-companion matrix, namely $C^{\sigma^i}$, we get a row vector which may be obtained by shifting on the left the entries of $\mathbf{v}$ and by replacing  its $d$-th entry with the element $v_1a^{\sigma^i}+v_2b^{\sigma^i}$, that is
\begin{equation} 
 (v_1,\ldots,v_d) \left( \begin{matrix} 
0 & 0 & \cdots & 0 & a^{\sigma^i} \\
1 & 0 & \cdots & 0 & b^{\sigma^i} \\
0 & 1 & \cdots & 0 & 0 \\
\vdots & \vdots & \ddots & \vdots & \vdots \\
0 & 0 & \cdots & 1 & 0 
\end{matrix}
\right)=(v_2,\ldots,v_d,v_1a^{\sigma^i}+v_2b^{\sigma^i}).\end{equation}
Then we have:
$$A_1=A=
\left( \begin{matrix} 
0 & 0 & \cdots & 0 & M_{1,1} \\
1 & 0 & \cdots & 0 & M_{2,1} \\
\vdots & \vdots & & & \vdots \\
0 & 0 & \cdots & 1 & M_{d,1}
\end{matrix} \right), A_2=
\left( \begin{matrix} 
0 & 0 & \cdots & M_{1,1} & M_{1,2} \\
0 & 0 & \cdots & M_{2,1} & M_{2,2}\\
\vdots & \vdots & & & \vdots \\
0 & 0 & \cdots & M_{d,1} & M_{d,2} 
\end{matrix}\right), \ldots . $$

So, if $k \geq d$, we have that $M_{l,k-1}$ is the $(l,d-1)$-entry of $A_{k}$, $M_{l,k-2}$ is the $(l,d-2)$ entry of $A_{k}$ etc. In general, $M_{l,k-h}$ is the $(l,d-h)$-entry of $A_{k}$, for any $h\in\{0,1,\ldots,d-1\}$, that is $M_{l,k-d+j}$ is the $(l,j)$-entry of $A_k$, for $j=1,\ldots,d$, that proves the first part of the statement. 

Now, let $k \geq d$, we have
$$A_k= A_{k-1}C^{\sigma^{k-1}}=$$ $$= \left( \begin{matrix} 
M_{1,k-1-d+1} & M_{1,k-1-d+2} & \cdots  & M_{1,k-1} \\
M_{2,k-1-d+1} & M_{2,k-1-d+2}& \cdots & M_{2,k-1}\\
\vdots & \vdots & \vdots  & \vdots \\
M_{d,k-1-d+1} & M_{d,k-1-d+2} & \cdots & M_{d,k-1} 
\end{matrix}\right)
\left( \begin{matrix} 
0 & 0 & \cdots & 0 & a^{\sigma^{k-1}} \\
1 & 0 & \cdots & 0 & b^{\sigma^{k-1}} \\
0 & 1 & \cdots & 0 & 0 \\
\vdots & \vdots & \ddots & \vdots & \vdots \\
0 & 0 & \cdots & 1 & 0 
\end{matrix}
\right)
$$
and then 
$$M_{l,k}=M_{l,k-d}a^{\sigma^{k-1}}+M_{l,k-d+1}b^{\sigma^{k-1}},$$ that is Equation \eqref{recurvisel} when $k\geq d$.

Assume that $k\leq d-1$, then
$$M_{l,k}=(0,\cdots,0,1,0, \cdots,0, M_{l,1}, \cdots, M_{l,k-1}) \cdot \left(a^{\sigma^{k-1}}, b^{\sigma^{k-1}}, 0, \ldots, 0\right)^T$$
where the $1$ in the first vector is in the $(l-(k-1))$-position, 
so that $M_{l,k}$ is zero when $l-(k-1)\notin\{1,2\}$.  
If either $l-(k-1)=1$ or $l-(k-1)=2$, that is $k=l$ or $k=l-1$, we have $M_{l,l}=a^{\sigma^{l-1}}$ and
$M_{l,l-1}=b^{\sigma^{l-2}}$. 
Equation \eqref{recurvisel} then follows also for $k\leq d-1$.
\end{proof}

As a consequence of Proposition \ref{prop:recursive}, we get the following result. 

\begin{proposition} \label{propM1}
For any $i\in \{0,\ldots,d-3\}$ 
$M_{1,j}=0$,
for every $j\in \{id+2,\ldots,(i+1)d-(i+1)\}$.
\end{proposition}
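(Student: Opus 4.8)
The plan is to prove the statement by induction on $i$, using only the recursive relation \eqref{recurvisel} with $l=1$ together with the boundary conditions $M_{1,1-d}=1$ and $M_{1,k}=0$ for every $k\le 0$ with $k\neq 1-d$ supplied by Proposition \ref{prop:recursive}.

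For the base case $i=0$ the claim asserts $M_{1,k}=0$ for $k\in\{2,\ldots,d-1\}$. Here I would apply \eqref{recurvisel}, which for $l=1$ reads $M_{1,k}=M_{1,k-d}\,a^{\sigma^{k-1}}+M_{1,k-d+1}\,b^{\sigma^{k-1}}$. When $2\le k\le d-1$ both indices $k-d$ and $k-d+1$ are non-positive, and neither of them equals the distinguished value $1-d$ (equality would force $k=1$ or $k=0$ respectively). Hence both coefficients $M_{1,k-d}$ and $M_{1,k-d+1}$ vanish by the boundary conditions, giving $M_{1,k}=0$.

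For the inductive step, assume the statement holds for $i-1$, that is $M_{1,j'}=0$ for all $j'\in\{(i-1)d+2,\ldots,id-i\}$, and fix $j\in\{id+2,\ldots,(i+1)d-(i+1)\}$. Applying \eqref{recurvisel} once more, it suffices to show that both $M_{1,j-d}$ and $M_{1,j-d+1}$ vanish. The heart of the argument is a bookkeeping check on the index ranges: as $j$ ranges over $\{id+2,\ldots,(i+1)d-(i+1)\}$, the shifted index $j-d$ ranges over $\{(i-1)d+2,\ldots,id-i-1\}$ and $j-d+1$ ranges over $\{(i-1)d+3,\ldots,id-i\}$, and both of these sets are contained in the range $\{(i-1)d+2,\ldots,id-i\}$ covered by the inductive hypothesis. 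Thus both terms on the right-hand side of \eqref{recurvisel} vanish and $M_{1,j}=0$, completing the induction for $i$ up to $d-3$.

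The only step demanding genuine care is this inclusion of index sets; once the four endpoints are matched correctly the conclusion is immediate. I would also remark that for $i\ge 1$ one has $j-d\ge 2>0$, so throughout the inductive step the recursion is applied entirely in the regime $k\ge 1$ and the distinguished boundary entry $M_{1,1-d}$ is never invoked, which is precisely what makes the clean containment of ranges work.
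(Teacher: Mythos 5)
Your proof is correct and follows essentially the same route as the paper's: induction on $i$, with the base case coming from the boundary conditions $M_{1,k}=0$ for $k\le 0$, $k\ne 1-d$, and the inductive step from the observation that the recursion \eqref{recurvisel} propagates two consecutive zeros forward by $d$; your index bookkeeping matches the paper's. The only difference is cosmetic: the paper states the step in the forward direction ($M_{1,k}=M_{1,k+1}=0$ implies $M_{1,k+d}=0$), while you unwind $M_{1,j}$ backward to $M_{1,j-d}$ and $M_{1,j-d+1}$.
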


\begin{proof}
We prove the claim by induction on $i$. For $i=0$ the assertion easily follows by \eqref{recurvisel}. Note that by Equation \eqref{recurvisel}, if $M_{1,k}=M_{1,k+1}=0$ then we have also $M_{1,k+d}=0$. So, if we assume the statement holds true for $i\in \{0,\ldots,d-4\}$, then we have $M_{1,id+2+d}=0,\ldots,M_{1,(i+1)d-(i+1)+d-1}=0$, that is the statement for $i+1$.
\end{proof}

For $M_{l,j}$'s the following holds.

\begin{proposition}\label{prop:Mlk}
For any $l\in \{2,\ldots,d\}$,
\[ M_{l,k}=\begin{cases}   
1 & \text{if}\,\, k=l-d,\\
0 & \text{if}\,\, k\leq 0 \,\,\text{and}\,\, k\ne l-d,\\
b^{\sigma^{l-2}} & \text{if}\,\, k=l-1,\\
a^{\sigma^{l-1}} & \text{if}\,\, k=l\\
0 & \text{if}\,\, 1 \leq k\leq d-1,\,\, k \neq l\,\, \text{and}\,\,k \neq l-1.
\end{cases} \]
\end{proposition}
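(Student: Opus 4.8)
The plan is to read off every case directly from the recursion \eqref{recurvisel}, together with the boundary conventions $M_{l,l-d}=1$ and $M_{l,k}=0$ for $k\le 0$, $k\ne l-d$, already fixed in Proposition \ref{prop:recursive}. The first two cases of the statement are nothing but these conventions, so they require no argument, and all the content is confined to the range $1\le k\le d$.

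First I would treat the interior range $1\le k\le d-1$. For such $k$ both shifted indices appearing in \eqref{recurvisel} are non-positive: indeed $k-d\le -1$ and $k-d+1\le 0$, so
\[
M_{l,k}=M_{l,k-d}\,a^{\sigma^{k-1}}+M_{l,k-d+1}\,b^{\sigma^{k-1}}
\]
expresses $M_{l,k}$ purely in terms of boundary values. By the conventions, $M_{l,k-d}$ is nonzero, and then equal to $1$, exactly when $k-d=l-d$, that is $k=l$; while $M_{l,k-d+1}$ is nonzero, and then equal to $1$, exactly when $k-d+1=l-d$, that is $k=l-1$. Substituting into the displayed identity yields $M_{l,l}=a^{\sigma^{l-1}}$ when $k=l$, $M_{l,l-1}=b^{\sigma^{l-2}}$ when $k=l-1$, and $M_{l,k}=0$ whenever $k\notin\{l-1,l\}$. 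This settles the third and fifth cases, as well as the fourth case for all $l\le d-1$.

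The only instance of the fourth case not covered above is the top diagonal entry $l=d$, $k=d$, which lies just outside the interior range. Here I would apply \eqref{recurvisel} once more, obtaining $M_{d,d}=M_{d,0}\,a^{\sigma^{d-1}}+M_{d,1}\,b^{\sigma^{d-1}}$. Now $M_{d,0}=1$ since $l-d=0$, and $M_{d,1}=0$ by the interior analysis already carried out, as $d\ge 3$ forces $1\notin\{d-1,d\}$. Hence $M_{d,d}=a^{\sigma^{d-1}}$, completing the last case.

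No step presents a genuine obstacle; the whole argument is a finite unwinding of the recursion. The only points demanding care are the bookkeeping of which shifted index reaches the distinguished boundary value $M_{l,l-d}=1$, and the observation that the single entry $M_{d,d}$ must be handled separately, since its recursion reaches into the interior rather than purely into the boundary.
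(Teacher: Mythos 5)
Your proof is correct and follows essentially the same route as the paper: both read off the cases $1\le k\le d-1$ directly from the recursion \eqref{recurvisel} together with the boundary conventions $M_{l,l-d}=1$ and $M_{l,k}=0$ for $k\le 0$, $k\ne l-d$. In fact you are slightly more careful than the paper, whose argument restricts to $k\le d-1$ and therefore silently omits the instance $k=l=d$ of the fourth case (the entry $M_{d,d}=a^{\sigma^{d-1}}$), which you correctly settle by one further application of the recursion using $M_{d,0}=1$ and $M_{d,1}=0$.
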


\begin{proof}
The first part coincides with the definition of $M_{l,k}$ when $k \leq 0$. Now, let $k \geq 0$. 
As $k\leq d-1$, then by applying Equation \eqref{recurvisel}, $M_{l,k-d+1}\ne 0$ if and only if %$k-d+1=l-d$, which happens if and only if 
$k=l-1$.
Similarly, $M_{l,k-d}$ is nonzero if and only if $k=l$. The assertion then follows.
\end{proof}

Furthermore, according to \eqref{eq:Cfrelshort} of Theorem \ref{th:maxker}, we are interested in determining the $(l,1)$-entries of $A_n$. By Proposition \ref{prop:recursive}, $(l,1)$-entries of $A_n$ correspond to $M_{l,n-d+1}$ for $l\in\{1,2,\ldots,d\}$.  
So, Theorem \ref{th:maxker}
may be rephrased in our case as follows.

\begin{corollary} \label{systemsplit}
Let $n \geq 1$, $d\geq 3$, $a,b \in \F_{q^n}$ and $L(x)=-x^{\sigma^d}+bx^\sigma+ax \in \mathcal{L}_{n,q,\sigma}$.
Then $\dim_{\mathbb{F}_q}(\ker(L(x))=d$  if and only if 
$$M_{l,n-d+1}=\begin{cases} 1  & \mbox{if } l=1, \\
0 & \mbox{if } l=2,\ldots d. \end{cases}$$
\end{corollary}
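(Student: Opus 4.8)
The plan is to translate the matrix condition of Theorem~\ref{th:maxker} into coordinate form using the machinery already established in Proposition~\ref{prop:recursive}. The key observation is that Corollary~\ref{systemsplit} is essentially a restatement of Equation~\eqref{eq:Cfrelshort}, so I would start there rather than from Equation~\eqref{eq:Cfrel}. Recall that by Theorem~\ref{th:maxker}, $\dim_{\F_q}(\ker(L(x)))=d$ holds if and only if $A_n\mathbf{e}_0=\mathbf{e}_0$, where $A_n=C_LC_L^\sigma\cdots C_L^{\sigma^{n-1}}$ and $\mathbf{e}_0=(1,0,\ldots,0)^T$. The product $A_n\mathbf{e}_0$ simply extracts the first column of $A_n$, so the condition $A_n\mathbf{e}_0=\mathbf{e}_0$ is equivalent to requiring that the $(l,1)$-entry of $A_n$ equals $1$ when $l=1$ and equals $0$ when $l\in\{2,\ldots,d\}$.

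First I would invoke the first bullet of Proposition~\ref{prop:recursive}, which identifies the entries of $A_k$ with the quantities $M_{l,k-d+j}$: precisely, for $k\geq d$ the $(l,j)$-entry of $A_k$ is $M_{l,k-d+j}$. Setting $k=n$ and $j=1$, the $(l,1)$-entry of $A_n$ is exactly $M_{l,n-d+1}$. This is the crucial indexing step, and it requires $n\geq d$, which holds here since the hypotheses of interest have $n\geq d$ (indeed $n=d(d-1)+g$ in the target Theorem~\ref{th:main}, though for this corollary only $n\geq 1$ and $d\geq 3$ are assumed). I would make sure the boundary cases $n<d$ are either excluded or handled, since the identification of $(l,1)$-entries with $M_{l,n-d+1}$ via the first bullet of Proposition~\ref{prop:recursive} is stated for $k\geq d$; a brief remark covering small $n$ or an appeal to the definitions would close this gap.

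Substituting this identification into the componentwise form of $A_n\mathbf{e}_0=\mathbf{e}_0$ yields directly that the condition is $M_{1,n-d+1}=1$ and $M_{l,n-d+1}=0$ for $l=2,\ldots,d$, which is precisely the claimed system. Thus the proof is essentially a two-line combination: Theorem~\ref{th:maxker} gives the matrix equation, and Proposition~\ref{prop:recursive} relabels the relevant entries.

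I do not expect a serious obstacle here; the statement is a direct corollary and the real content lies in Proposition~\ref{prop:recursive}, which is already proved. The only point demanding mild care is the index bookkeeping — confirming that $n-d+1$ is the correct second index for the first column (taking $j=1$ in the formula $M_{l,k-d+j}$) and that this lands in the regime where Proposition~\ref{prop:recursive} applies. If one wanted to be fully rigorous about edge cases, the main subtlety would be verifying the claim when $n-d+1\leq 0$, i.e.\ very small $n$; but under the intended application $n$ is large, so this is a formality rather than a genuine difficulty.
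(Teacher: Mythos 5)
Your proposal is correct and follows essentially the same route as the paper: the authors also combine Equation \eqref{eq:Cfrelshort} of Theorem \ref{th:maxker} with the first bullet of Proposition \ref{prop:recursive} (taking $k=n$, $j=1$) to identify the $(l,1)$-entries of $A_n$ with $M_{l,n-d+1}$ and read off the stated system. Your extra caution about the regime $n<d$ is a reasonable refinement the paper does not spell out, but it does not change the argument.
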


Our aim now is to write $M_{l,k}$ as a combination of $M_{l,i}$'s which are easier to calculate.
This can be done by using the recursive formula \eqref{recurvisel}. Indeed, by applying \eqref{recurvisel}, after $j$ steps $M_{l,k}$ will depend on $M_{l,k-jd},\ldots,M_{l,k-jd+j}$, see Figure \ref{grafo}.

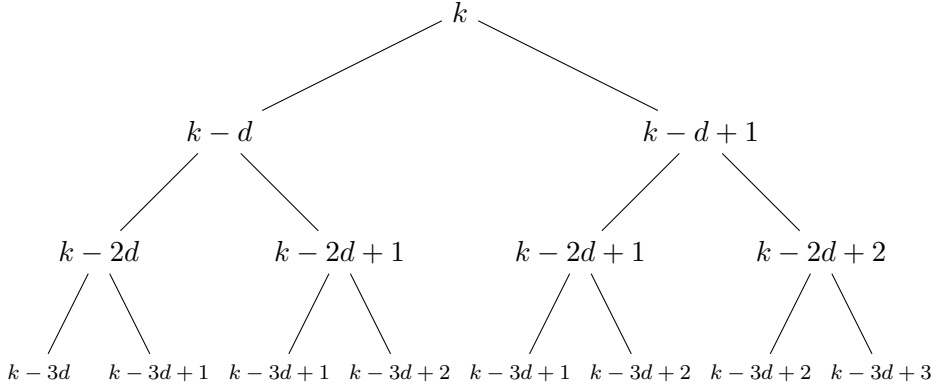
\begin{figure}[htp]	
\begin{tikzpicture}[scale=.8]
\draw
(1, 1) node(a){\scriptsize $k-3d$}
(3, 1) node(b){\scriptsize$k-3d+1$}
(5, 1) node(c){\scriptsize$k-3d+1$}
(7, 1) node(d){\scriptsize$k-3d+2$}
(9, 1) node(e){\scriptsize$k-3d+1$}
(11, 1) node(f){\scriptsize$k-3d+2$}
(13, 1) node(g){\scriptsize$k-3d+2$}
(15, 1) node(h){\scriptsize$k-3d+3$}
(2, 3) node(i){$k-2d$}
(6, 3) node(l){$k-2d+1$}
(10, 3) node(m){$k-2d+1$}
(14, 3) node(n){$k-2d+2$}
(4, 5) node(o){$k-d$}
(12, 5) node(p){$k-d+1$}
(8, 7) node(q){$k$};

\draw[-] (a) -- (i);
\draw[-] (i) -- (o);
\draw[-] (o) --  (q);
\draw[-] (q) -- (p);
\draw[-] (o) --  (l);
\draw[-] (i) -- (b);
\draw[-] (l) --  (c);
\draw[-] (l) -- (d);
\draw[-] (p) --  (m);
\draw[-] (p) -- (n);
\draw[-] (m) --  (e);
\draw[-] (m) -- (f);
\draw[-] (n) --  (g);
\draw[-] (n) -- (h);

\end{tikzpicture}
\caption{Recursive indexes of the $M_{i,j}$'s} \label{grafo}
\end{figure}

Let $j$ and $t$ be two positive integers with $0 \leq t \leq j$. The element $M_{l,k-jd+t}$ appears when we apply $j$ times the recursive formula \eqref{recurvisel} to $M_{l,k}$.
Taking into account Figure \ref{grafo}, we note that $M_{l,k-jd+t}$ is obtained $\binom{j}{t}$ times, each of which is obtained going right $t$ times and going left $j-t$ times in the Figure \ref{grafo}.

So, by applying $j$ times \eqref{recurvisel} we have
\begin{equation} \label{expressionMk}
M_{l,k}=\sum\limits_{t=0}^jc^k_{j,t}M_{l,k-jd+t},
\end{equation}
where $c_{j,t}^k \in \F_{q^n}$ denotes the coefficient of $M_{l,k-jd+t}$, which can be computed as follows.
As $c^k_{j,t}$ is the coefficient of $M_{l,k-jd+t}$, this means that in order to obtain $M_{l,k-jd+t}$ we go right through Figure \ref{grafo} exactly $t$ times at the steps $i_1,\ldots,i_t$ with $i_1\leq \ldots\leq i_t$.
Note that when we go right in Figure \ref{grafo} we multiply by a power of $b$, otherwise we multiply by a power of $a$.
More precisely, if we first go left $i_1$ times in the expression of $M_{l,k}$ will appear $M_{l,k-i_1d}$ multiplied by $a^{\sigma^{k-1}}\cdot \ldots \cdot a^{\sigma^{k-(i_1-1)d-1}}$. Then, if we go right in the expression of $M_{l,k}$ will appear $a^{\sigma^{k-1}}\cdot \ldots \cdot a^{\sigma^{k-(i_1-1)d-1}} M_{l,k-(i_1+1)d+1}$ multiplied by $b^{\sigma^{k-i_1d-1}}$. Therefore, we apply this procedure $j$ times and hence
\begin{equation}\label{eqckjt}
c^{k}_{j,t}=\sum_{\substack{i_0,\ldots,i_{t}=0 \\ i_0+\ldots+i_t=j-t}}^{j-t}  \left( \prod_{j_0=1}^{i_0} a^{\sigma^{k-(j_0-1)d-1}} \right) \left( \prod_{j_1=1}^{i_1} a^{\sigma^{k-(i_0+j_1)d}} \right)\cdot \ldots \cdot \end{equation}
$$\left( \prod_{j_{t-1}=1}^{i_{t-1}} a^{\sigma^{k-(i_0+\cdots+i_{t-2} +j_{t-1} +t-2)d+t-2}} \right) \cdot \left( \prod_{j_{t}=1}^{i_t} a^{\sigma^{k-(j-i_t+j_t-1)d+t-1}} \right) \cdot$$ 
$$ \left( \prod_{u=0}^{t-1} b^{\sigma^{k-(i_0+\cdots+i_u+u)d+u-1}} \right).
$$

Note that
\[ c_{j,0}^k=\prod_{j_0=1}^{j} a^{\sigma^{k-(j_0-1)d-1}}\,\,\,\, \text{and}\,\,\,\,  c_{j,j}^k=  \prod_{u=0}^{j-1} b^{\sigma^{k-u d+u-1}}. \]

\noindent In particular, for $k=n-d+1$, we have
$$M_{l,n-d+1}=\sum\limits_{t=0}^jc_{j,t}M_{l,n-(j+1)d+t+1},$$
where we set $c_{j,t}=c_{j,t}^{n-d+1}$.

Let $n=d(d-1)+g$ with $0\leq g< d$. We have 
$$M_{l,n-d+1}=\sum\limits_{t=0}^jc_{j,t}M_{l,(d-j-2)d+t+1+g},$$
and choosing $j=d-1$, we have
\begin{equation}\label{eq:importante}
M_{l,n-d+1}=\sum_{t=0}^{d-1}c_{d-1,t}M_{l,-d+t+1+g}.
\end{equation}

The first case $g=1$ and $\sigma \colon x\in \F_{q^n}\mapsto x^q\in \F_{q^n}$ is studied in \cite{McGuireMuller}.

When $a$ and $b$ satisfies certain conditions, the $c_{i,j}^k$ have a simpler representation. 
 
\begin{lemma} \label{trianglepascal}
Suppose that $a^{\sigma^{d}}b=a^{\sigma}b^{\sigma^d}$, then we have 
$$M_{l,k}=\sum_{i=0}^{j} \binom{j}{i}z_{j,i}^kM_{l,k-jd+i},$$
for every positive integer $k\geq d+1$ and $j\in \{0,\ldots,\lfloor \frac{k-1}{d}+1 \rfloor\}$.
Hence, the $c_{j,i}^k$ defined in \eqref{expressionMk} are 
$$c_{j,i}^k=\binom{j}{i}z_{j,i}^k,$$
and the $z^k_{j,i}$ are determined by the following recursion:
\begin{equation} \label{expressionzij}
z_{j,i}^k=\begin{cases}
1 & \mbox{if }j=i=0, \\
z_{j-1,0}^ka^{\sigma^{k-(j-1)d-1}} & \mbox{if }i=0,  \\
z_{j-1,i}^ka^{\sigma^{k-(j-1)d+i-1}}=z_{j-1,i-1}^kb^{\sigma^{k-(j-1)d+i-2}} & \mbox{if }0<i<j, \\
z_{j-1,j-1}^kb^{\sigma^{k-(j-1)d+j-2}} & \mbox{if }i=j.
\end{cases} 
\end{equation}
\end{lemma}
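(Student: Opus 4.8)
The plan is to run a single induction on $j$ that simultaneously establishes two facts: that the recursion \eqref{expressionzij} is well-defined (its two middle expressions for $0<i<j$ genuinely agree), and that $c_{j,i}^k=\binom{j}{i}z_{j,i}^k$. First I would record the recursion satisfied by the coefficients $c_{j,i}^k$ of \eqref{expressionMk}, \emph{with no hypothesis on} $a,b$: applying \eqref{recurvisel} once more to each term $M_{l,k-jd+t}$ in $M_{l,k}=\sum_t c_{j,t}^k M_{l,k-jd+t}$ and collecting the coefficient of $M_{l,k-(j+1)d+s}$ gives
\[
c_{j,i}^k=c_{j-1,i}^k\,a^{\sigma^{k-(j-1)d+i-1}}+c_{j-1,i-1}^k\,b^{\sigma^{k-(j-1)d+i-2}},
\]
with the conventions $c_{j-1,i}^k=0$ for $i<0$ or $i>j-1$ and $c_{0,0}^k=1$. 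This mirrors the shape of \eqref{expressionzij} exactly and is the engine of the induction.

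The heart of the matter is the interpretation of the hypothesis $a^{\sigma^d}b=a^{\sigma}b^{\sigma^d}$ as a \emph{commutation} statement on Figure \ref{grafo}. By \eqref{recurvisel}, descending left from a node of index $\kappa$ multiplies by $a^{\sigma^{\kappa-1}}$ and sends $\kappa\mapsto\kappa-d$, while descending right multiplies by $b^{\sigma^{\kappa-1}}$ and sends $\kappa\mapsto\kappa-d+1$. Comparing a left–then–right descent with a right–then–left descent from $\kappa$, both arrive at $\kappa-2d+1$ and produce the monomials $a^{\sigma^{\kappa-1}}b^{\sigma^{\kappa-d-1}}$ and $a^{\sigma^{\kappa-d}}b^{\sigma^{\kappa-1}}$; setting $m=\kappa-d-1$, their equality is precisely $\sigma^m$ applied to the hypothesis, namely $a^{\sigma^{m+d}}b^{\sigma^m}=a^{\sigma^{m+1}}b^{\sigma^{m+d}}$. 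Thus the hypothesis says exactly that adjacent left/right steps commute.

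Using this, I would prove the consistency of the middle clause of \eqref{expressionzij} at level $j$ (assuming well-definedness below $j$): write $z_{j-1,i}^k$ through its ``right'' formula and $z_{j-1,i-1}^k$ through its ``left'' formula, both in terms of the common ancestor $z_{j-2,i-1}^k$. Substituting into the required identity $z_{j-1,i}^k a^{\sigma^{k-(j-1)d+i-1}}=z_{j-1,i-1}^k b^{\sigma^{k-(j-1)d+i-2}}$ and cancelling $z_{j-2,i-1}^k$ collapses everything, after setting $m=k-(j-1)d+i-2$, to a single instance $a^{\sigma^{m+d}}b^{\sigma^m}=a^{\sigma^{m+1}}b^{\sigma^{m+d}}$ of the commutation identity above. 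With well-definedness at level $j$ in hand, I close the binomial identity: substituting $c_{j-1,i'}^k=\binom{j-1}{i'}z_{j-1,i'}^k$ into the $c$-recursion and invoking both middle formulas to rewrite $z_{j-1,i}^k a^{\sigma^{\cdots}}=z_{j-1,i-1}^k b^{\sigma^{\cdots}}=z_{j,i}^k$, the right-hand side becomes $\big(\binom{j-1}{i}+\binom{j-1}{i-1}\big)z_{j,i}^k=\binom{j}{i}z_{j,i}^k$ by Pascal's rule. The boundary cases $i=0$ and $i=j$ are immediate, since only one feeder term survives and $\binom{j}{0}=\binom{j-1}{0}=1$, $\binom{j}{j}=\binom{j-1}{j-1}=1$; the base cases $j=0,1$ follow from $c_{0,0}^k=1$, $c_{1,0}^k=a^{\sigma^{k-1}}$, $c_{1,1}^k=b^{\sigma^{k-1}}$.

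I expect the main obstacle to be purely the index bookkeeping: checking that after substituting the feeder formulas every $\sigma$-exponent matches so that consistency reduces to a single shift of the hypothesis, and handling the degenerate boundary sub-cases $i\in\{1,j-1\}$ where one feeder is governed by the $i=0$ or $i=j$ clause of \eqref{expressionzij} (these turn out to coincide automatically with the generic formulas). The range $j\le\lfloor\frac{k-1}{d}+1\rfloor$ with $k\ge d+1$ is exactly what keeps all intermediate indices in the regime where \eqref{recurvisel} applies, so no extra justification of the repeated expansion is needed beyond Proposition \ref{prop:recursive}.
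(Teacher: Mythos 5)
Your proposal is correct and follows essentially the same route as the paper: induction on $j$, verifying that the two middle expressions for $z_{j,i}^k$ agree by tracing both back to the common ancestor $z_{j-2,i-1}^k$ and invoking a $\sigma$-shift of the hypothesis $a^{\sigma^d}b=a^{\sigma}b^{\sigma^d}$ (the paper's relation \eqref{tartrela}), and then concluding with Pascal's rule $\binom{j-1}{i}+\binom{j-1}{i-1}=\binom{j}{i}$. The ``commutation of adjacent left/right steps'' reading of the hypothesis is a pleasant reformulation, but the mechanics and index bookkeeping are identical to the paper's argument.
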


\begin{proof}
First observe that when $k\geq d+1$ then $a^{\sigma^{k-1}}b^{\sigma^{k-d-1}}=a^{\sigma^{k-d}}b^{\sigma^{k-1}}$.
We will prove the statement by induction.
The case $j=1$ follows by \eqref{recurvisel}; now we prove the assertion for $j=2$.
Let $k \geq d+1$. By Equation \eqref{recurvisel},
\begin{equation} \label{recurs2}
    \begin{split}
        M_{l,k} & =M_{l,k-d}a^{\sigma^{k-1}}+M_{l,k-d+1}b^{\sigma^{k-1}} \\
        & =(M_{l,k-2d}a^{\sigma^{k-d-1}}+M_{l,k-2d+1}b^{\sigma^{k-d-1}})a^{\sigma^{k-1}}+ \\
        & \hskip 0.5 cm (M_{l,k-2d+1}a^{\sigma^{k-d}}+M_{l,k-2d+2}b^{\sigma^{k-d}})b^{\sigma^{k-1}} \\ 
        & =M_{l,k-2d}a^{\sigma^{k-d-1}+\sigma^{k-1}}+M_{l,k-2d+1}(a^{\sigma^{k-1}}b^{\sigma^{k-d-1}}+a^{\sigma^{k-d}}b^{\sigma^{k-1}}) \\
        & \hskip 0.5 cm + M_{l,k-2d+2}b^{\sigma^{k-d}+\sigma^{k-1}} 
    \end{split}
\end{equation}
As $a^{\sigma^{k-1}}b^{\sigma^{k-d-1}}=a^{\sigma^{k-d}}b^{\sigma^{k-1}}$, putting $z^k_{2,0}=a^{\sigma^{k-d-1}+\sigma^{k-1}}$, \\  $z^k_{2,1}=2a^{\sigma^{k-d}}b^{\sigma^{k-1}}$ and $z^k_{2,2}=b^{\sigma^{k-d}+\sigma^{k-1}}$, \eqref{recurs2} may be rewritten as follows 
$$M_{l,k}=z^k_{2,0}M_{l,k-2d}+2z^k_{2,1}M_{l,k-2d+1}+z^k_{2,2}M_{l,k-2d+2},$$
and the statement for $j=2$ is proved.
Now, assume that the statement holds for any $t<j$, that is 
$$M_{l,k}=\sum_{i=0}^{t} \binom{t}{i}z_{t,i}^kM_{l,k-td+i},$$
where $z_{t,i}^k$ are defined as in \eqref{expressionzij}.
Then by applying \eqref{recurvisel} to $M_{l,k-(j-1)d+i}$ we have
\begin{small}
\[
\begin{split}
    M_{l,k} & =\sum_{i=0}^{j-1}\binom{j-1}{i}z^k_{j-1,i}M_{l,k-(j-1)d+i}\\
    & =\sum_{i=0}^{j-1}\binom{j-1}{i}z^k_{j-1,i}(M_{l,k-jd+i}a^{\sigma^{k-(j-1)d+i-1}}+M_{l,k-jd+i+1}b^{\sigma^{k-(j-1)d+i-1}}) \\
    &=\binom{j-1}{0}z^k_{j-1,0}M_{l,k-jd}a^{\sigma^{k-(j-1)d-1}}+\binom{j-1}{j-1}z^k_{j-1,j-1} M_{l,k-jd+j}b^{\sigma^{k-(j-1)d+j-2}} \\
    & +\sum_{i=1}^{j-1}M_{l,k-jd+i} \left(\binom{j-1}{i}z^k_{j-1,i}a^{\sigma^{k-(j-1)d+i-1}}+ \binom{j-1}{i-1}z^k_{j-1,i-1}b^{\sigma^{k-(j-1)d+i-2}}\right).
    \end{split}
\]
\end{small}
By hypothesis we have
\begin{equation} \label{tartrela}
a^{\sigma^{k-(j-2)d+i-2}}b^{\sigma^{k-(j-1)d+i-2}}=a^{\sigma^{k-(j-1)d+i-1}}b^{\sigma^{k-(j-2)d+i-2}}.
\end{equation}
Furthermore,
$$z_{j-1,i}^ka^{\sigma^{k-(j-1)d+i-1}}=z_{j-2,i-1}^ka^{\sigma^{k-(j-1)d+i-1}}b^{\sigma^{k-(j-2)d+i-2}}$$
and
$$z_{j-1,i-1}^k b^{\sigma^{k-(j-1)d+i-2}}=z_{j-2,i-1}^ka^{\sigma^{k-(j-2)d+i-2}}b^{\sigma^{k-(j-1)d+i-2}}$$
and by \eqref{tartrela}, these two expressions are equal. 
As $\binom{j-1}{i}+ \binom{j-1}{i-1}=\binom{j}{i}$ we get
$$M_{l,k}=\binom{j}{0}z^k_{j-1,0}M_{l,k-jd}a^{\sigma^{k-(j-1)d-1}}+\binom{j}{j}z^k_{j-1,j-1} M_{l,k-jd+j}b^{\sigma^{k-(j-1)d+j-2}}$$ $$
     +\sum_{i=1}^{j-1} \binom{j}{i}z^k_{j-1,i}M_{l,k-jd+i}a^{\sigma^{k-(j-1)d+i-1}},
$$ 
that is the statement.
\end{proof}

\subsection{Proof of Theorem \ref{th:main}}\label{sec:closedfor}

Let $n=(d-1)d+g$ and $1\leq g \leq d-1$.
We also remind that in Proposition \ref{prop:recursive}, we set $M_{l,l-d}=1$, and $M_{l,k}=0$ for $k \leq 0$ and $k \neq l-d$. \\

\emph{Proof of Theorem \ref{th:main}}\\
Let start by recalling that, by \eqref{eq:importante}, we have
\begin{equation}\label{eq:Ml,n-d+1}
M_{l,n-d+1}=\sum_{t=0}^{d-1} c_{d-1,t} M_{l,-d+1+g+t}.
\end{equation}

\noindent By Theorem \ref{th:maxker}, $\dim_{\F_q}(\ker(L(x)))=d$ if and only if 
$M_{l,n-d+1}=
\begin{cases}
1 & \text{if}\,\, l=1,\\
0 & \text{if}\,\, l\ne 1.
\end{cases}$
So, \eqref{eq:Ml,n-d+1} and Proposition \ref{prop:Mlk} imply that if $g \leq d-2$
$$\begin{cases}
1=M_{1,n-d+1}=M_{1,1}c_{d-1,d-g}=ac_{d-1,d-g} \\
0=M_{2,n-d+1}=M_{2,1}c_{d-1,d-g}+M_{2,2}c_{d-1,d-g+1}\\
\hskip 2.38 cm =bc_{d-1,d-g}+a^\sigma c_{d-1,d-g+1}\\
\vdots \\
0=M_{g,n-d+1}=c_{d-1,d-2}M_{g,g-1}+c_{d-1,d-1}M_{g,g}\\
\hskip 2.38 cm = b^{\sigma^{g-2}}c_{d-1,d-2}+a^{\sigma^{g-1}}c_{d-1,d-1}\\
0=M_{g+1,n-d+1}=c_{d-1,d-1}M_{g+1,g}+c_{d-1,0}M_{g+1,-d+g+1}\\
\hskip 2.77 cm =b^{\sigma^{g-1}}c_{d-1,d-1}+c_{d-1,0} \\
0=M_{g+2,n-d+1}=c_{d-1,1}M_{g+2,-d+1+g+1}=c_{d-1,1} \\
\vdots \\
0=M_{d,n-d+1}=c_{d-1,d-g-1}
\end{cases}
$$
and if $g=d-1$
$$\begin{cases}
1=M_{1,n-d+1}=M_{1,1}c_{d-1,d-g}=ac_{d-1,1} \\
0=M_{2,n-d+1}=M_{2,1}c_{d-1,1}+M_{2,2}c_{d-1,2}=bc_{d-1,1}+a^\sigma c_{d-1,2}\\
\vdots \\
0=M_{d-1,n-d+1}=c_{d-1,d-2}M_{d-1,d-2}+c_{d-1,d-1}M_{d-1,d-1}\\
\hskip 2.8 cm = b^{\sigma^{d-3}}c_{d-1,d-2}+a^{\sigma^{d-2}}c_{d-1,d-1}\\
0=M_{d,n-d+1}=c_{d-1,d-1}M_{d,d-1}+c_{d-1,0}M_{d,0}\\
\hskip 2.4 cm =b^{\sigma^{d-2}}c_{d-1,d-1}+c_{d-1,0},
\end{cases}
$$

which can be written as 

$$\begin{cases}
c_{d-1,d-g}=a^{-1} \\
c_{d-1,d-g+1}=-ba^{-1}a^{-\sigma}\\
%c_{d-1,d-g+2}=b^{\sigma}ba^{-1}a^{-\sigma}a^{-\sigma^2}\\
\vdots \\
c_{d-1,d-g+u}=(-1)^ub^{\sigma^{u-1}}b^{\sigma^{u-2}}\cdots b^\sigma b a^{-1}a^{-\sigma}\cdots a ^{-\sigma^{u}} \\
\vdots \\
c_{d-1,d-1}=(-1)^{g-1}b^{\sigma^{g-2}}b^{\sigma^{g-3}}\cdots b^\sigma b a^{-1}a^{-\sigma}\cdots a ^{-\sigma^{g-1}}\\
\hskip 1.45 cm =(-1)^{g-1}b^{\frac{\sigma^{g-1}-1}{\sigma-1}}a^{-\frac{\sigma^g-1}{\sigma-1}} \\
c_{d-1,0}=(-1)^{g}b^{\sigma^{g-1}}b^{\sigma^{g-2}}\cdots b^\sigma b a^{-1}a^{-\sigma}\cdots a ^{-\sigma^{g-1}}\\
\hskip 1.05 cm =(-1)^{g}b^{\frac{\sigma^g-1}{\sigma-1}}a^{-\frac{\sigma^g-1}{\sigma-1}} \\
c_{d-1,1}=0 \\
\vdots \\
c_{d-1,d-g-1}=0
\end{cases}
$$
if $g \leq d-2$ and

\begin{equation} \label{trinomcondition} \begin{cases}
c_{d-1,1}=a^{-1} \\
c_{d-1,2}=-ba^{-1}a^{-\sigma}\\
\vdots \\
c_{d-1,1+u}=(-1)^u \prod_{i=0}^{u-1}b^{\sigma^{i}}\prod_{i=0}^{u}a^{\sigma^{i}} \\
\vdots \\
c_{d-1,d-1}=(-1)^{d-2}b^{\sigma^{d-3}}b^{\sigma^{d-4}}\cdots b^\sigma b a^{-1}a^{-\sigma}\cdots a ^{-\sigma^{d-2}}\\
\hskip 1.45 cm =(-1)^{d-2}b^{\frac{\sigma^{d-2}-1}{\sigma-1}}a^{-\frac{\sigma^{d-1}-1}{\sigma-1}} \\
c_{d-1,0}=(-1)^{d-1}b^{\sigma^{d-2}}b^{\sigma^{d-3}}\cdots b^\sigma b a^{-1}a^{-\sigma}\cdots a ^{-\sigma^{d-2}}\\
\hskip 1.04 cm =(-1)^{d-1} b^{\frac{\sigma^{d-1}-1}{\sigma-1}}a^{-\frac{\sigma^{d-1}-1}{\sigma-1}} \\
\end{cases}
\end{equation}
if $g =d-1$.

In order to determine the $c_{d-1,i}$'s we use Equation \eqref{eqckjt} with $j=d-1$ and $k=n-d+1$ and we get

$$
c^{k}_{j,t}=\sum_{\substack{i_0,\ldots,i_{t}=0 \\ i_0+\ldots+i_t=j-t}}^{j-t}  \left( \prod_{j_0=1}^{i_0} a^{\sigma^{k-(j_0-1)d-1}} \right) \left( \prod_{j_1=1}^{i_1} a^{\sigma^{k-(i_0+j_1)d}} \right)\cdot \ldots \cdot $$
$$\left( \prod_{j_{t-1}=1}^{i_{t-1}} a^{\sigma^{k-(i_0+\cdots+i_{t-2} +j_{t-1} +t-2)d+t-2}} \right) \cdot \left( \prod_{j_{t}=1}^{i_t} a^{\sigma^{k-(j-i_t+j_t-1)d+t-1}} \right)\cdot $$ $$ \left( \prod_{u=0}^{t-1} b^{\sigma^{k-(i_0+\cdots+i_u+u)d+u-1}} \right).
$$

Hence, by looking to the previous systems and the above relation we have

$$
c_{d-1,t}=\sum_{\substack{i_0,\ldots,i_{t}=0 \\ i_0+\ldots+i_t=d-1-t}}^{d-1-t}  \left( \prod_{j_0=1}^{i_0} a^{\sigma^{d(d-1-j_0)+g}} \right) \left( \prod_{j_1=1}^{i_1} a^{\sigma^{d(d-i_0-j_1-2)+g+1}} \right)\cdot \ldots \cdot $$
$$\left( \prod_{j_{t-1}=1}^{i_{t-1}} a^{\sigma^{d(d-i_0+\cdots-i_{t-2} -j_{t-1} -t)+g+t-1}} \right) \cdot \left( \prod_{j_{t}=1}^{i_t} a^{\sigma^{d(i_t-j_t)+t+g}} \right) \cdot$$ $$\left( \prod_{u=0}^{t-1} b^{\sigma^{d(d-2-i_0-\cdots-i_u-u)+u+g}} \right)$$ 
$$=\left\{ \begin{array}{lll} (-1)^v b^{\frac{\sigma^{v}-1}{\sigma-1}} a^{-\frac{\sigma^{v+1}-1}{\sigma-1}} & \text{if}\,\, t=d-g+v, \,\, 0\leq v\leq 
g-1,\\
(-1)^g a^{-\frac{\sigma^{g}-1}{\sigma-1}}b^{\frac{\sigma^{g}-1}{\sigma-1}}& \text{if}\,\, t=0,\\ 
0 & \text{otherwise},\end{array}\right. 
$$

which proves the assertion. \qed

\begin{remark}
We point out that the proof of (c) of Theorem \ref{th:mainMcG} can be also obtained by manipulating the conditions of Theorem \ref{th:main}.
\end{remark}

\section{Some necessary conditions}\label{sec:neccond}

In this section we will give some necessary condition on the coefficients of $L(x)=ax+bx^{\sigma}-x^{\sigma^d}\in \mathcal{L}_{n,q,\sigma}$ to have kernel of maximum dimension.
In order to do this, we find the coefficients $c_{d-1,0}$ and $c_{d-1,d-1}$.
Indeed, the coefficient $c_{d-1,0}^k$ can be obtained by applying \eqref{eqckjt} with $t=0$ and $j=d-1$, that is
\[ c_{d-1,0}^k=a^{\sigma^{k-1}+\sigma^{k-d-1}+\ldots+\sigma^{k-(d-2)d-1}}. \]
With $k=n-d+1$, we get
\begin{equation} \label{cd-10}
     c_{d-1,0}=c_{d-1,0}^k=a^{\sigma^g(\sigma^{d(d-2)}+\ldots+\sigma^d+1)}=a^{\sigma^g e_1}, 
\end{equation}
where $e_1=\frac{\sigma^{d(d-1)}-1}{\sigma^d-1}$.
Similarly, the coefficient $c_{d-1,d-1}^k$ can be obtained by applying \eqref{eqckjt} with $t=d-1$ and $j=d-1$, that is
\[ c_{d-1,d-1}^k=b^{\sigma^{k-1}+\sigma^{k-d}+\ldots+\sigma^{k-(d-2)d+d-3}}. \]
With $k=n-d+1$, we get
\begin{equation} \label{cd-1d-1}
 c_{d-1,d-1}=c_{d-1,d-1}^{n-d+1}=b^{\sigma^{g-1}(\sigma^{d-1}+\ldots+\sigma^{(d-1)(d-1)})}=b^{\sigma^{g-1} e_2}, 
 \end{equation}
where $e_2=\frac{\sigma^{d(d-1)}-\sigma^{d-1}}{\sigma^{d-1}-1}$.

As a consequence of, we get the following necessary conditions.

\begin{corollary} \label{cor:neccond}
Let $n=(d-1)d+g$, with $1\leq g \leq d-1$.
If $L(x)$ has maximum kernel then
$$ \begin{cases}
b^{\frac{\sigma^g-1}{\sigma-1}}a^{-\frac{\sigma^g-1}{\sigma-1}}=(-1)^{g}a^{\sigma^ge_1}, \\
b^{\frac{\sigma^{g-1}-1}{\sigma-1}}a^{-\frac{\sigma^g-1}{\sigma-1}}=(-1)^{g-1}b^{\sigma^{g-1}e_2}.
\end{cases}
$$
\end{corollary}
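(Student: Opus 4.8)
The plan is to read off the two asserted identities by comparing two independent descriptions of the boundary coefficients $c_{d-1,0}$ and $c_{d-1,d-1}$. Each of these coefficients can be computed in two ways: once as the value forced by the maximum-kernel hypothesis through Theorem \ref{th:main}, and once as the direct evaluation of the general formula \eqref{eqckjt} already recorded in \eqref{cd-10} and \eqref{cd-1d-1}. Equating the two values for each of the two coefficients yields exactly the system in the statement, so the corollary is essentially a matching of these two sources of information at the extreme indices $t=0$ and $t=d-1$.

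First I would specialize the right-hand side of the system in Theorem \ref{th:main} to those two indices. For $t=0$ the relevant clause gives $c_{d-1,0}=(-1)^{g}a^{-\frac{\sigma^{g}-1}{\sigma-1}}b^{\frac{\sigma^{g}-1}{\sigma-1}}$. For $t=d-1$ I would write $d-1=d-g+v$, which forces $v=g-1$; since $g\geq 1$ this index lies in the admissible range $0\leq v\leq g-1$, and the first clause then gives $c_{d-1,d-1}=(-1)^{g-1}b^{\frac{\sigma^{g-1}-1}{\sigma-1}}a^{-\frac{\sigma^{g}-1}{\sigma-1}}$. Both evaluations hold whenever $L(x)$ has maximum kernel, by Theorem \ref{th:main}.

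Next I would invoke the unconditional evaluations $c_{d-1,0}=a^{\sigma^{g}e_1}$ from \eqref{cd-10} and $c_{d-1,d-1}=b^{\sigma^{g-1}e_2}$ from \eqref{cd-1d-1}; these follow by setting $j=d-1$, $k=n-d+1$, and $t=0$ (respectively $t=d-1$) in \eqref{eqckjt}, where the multinomial sum collapses to its single surviving term (all of $a$, respectively all of $b$). Equating the two expressions for $c_{d-1,0}$ and multiplying through by $(-1)^{g}$, using $(-1)^{2g}=1$, produces the first identity $b^{\frac{\sigma^{g}-1}{\sigma-1}}a^{-\frac{\sigma^{g}-1}{\sigma-1}}=(-1)^{g}a^{\sigma^{g}e_1}$; equating the two expressions for $c_{d-1,d-1}$ and multiplying through by $(-1)^{g-1}$ produces the second identity $b^{\frac{\sigma^{g-1}-1}{\sigma-1}}a^{-\frac{\sigma^{g}-1}{\sigma-1}}=(-1)^{g-1}b^{\sigma^{g-1}e_2}$.

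I expect no genuine obstacle here beyond bookkeeping: once the two closed formulas for each boundary coefficient are in hand, the corollary is a one-line comparison. The only point requiring a moment of care is verifying that $v=g-1$ is admissible in the $t=d-1$ case, which holds precisely because $g\geq 1$. The substantive computational content has already been absorbed into Theorem \ref{th:main} and the evaluations \eqref{cd-10} and \eqref{cd-1d-1}, so the proof reduces to these two sign-tracking manipulations.
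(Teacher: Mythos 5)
Your proposal is correct and matches the paper's own argument exactly: the paper likewise evaluates $c_{d-1,0}=a^{\sigma^g e_1}$ and $c_{d-1,d-1}=b^{\sigma^{g-1}e_2}$ directly from \eqref{eqckjt} (as recorded in \eqref{cd-10} and \eqref{cd-1d-1}) and equates these with the values $(-1)^g b^{\frac{\sigma^g-1}{\sigma-1}}a^{-\frac{\sigma^g-1}{\sigma-1}}$ and $(-1)^{g-1}b^{\frac{\sigma^{g-1}-1}{\sigma-1}}a^{-\frac{\sigma^g-1}{\sigma-1}}$ forced by Theorem \ref{th:main} at $t=0$ and $t=d-1$ (i.e.\ $v=g-1$). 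Your check that $v=g-1$ is admissible because $g\geq 1$ is the only point of care, and you have it right.
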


\section{A new family of $\sigma$-linearized trinomials with maximum kernel}\label{sec:newfamily}

In this section we introduce a new family of $\sigma$-linearized trinomials with maximum kernel, relying on the results of Section \ref{sec:thmain}.
In particular we prove the following.

\begin{theorem} \label{zullopaultheo}
Let $L(x)=-x^{\sigma^d}+bx^{\sigma}+ax \in \mathcal{L}_{n,q,\sigma}$, with $n=(d-1)d+d-1$. If
\begin{itemize}
\item $q$ is even,
\item $d$ is a power of $2$,
\item $\N_{q^n/q}(a)=1$,
\item $b=a^{-\sigma^d\frac{\sigma^{d(d-1)}-1}{\sigma^d-1}}$,
\end{itemize}
then $\dim_{\F_q}(\ker(L))=d$.
\end{theorem}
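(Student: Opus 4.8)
The plan is to verify that the pair $(a,b)$ satisfies the system of coefficient equations that Theorem~\ref{th:main} shows to be equivalent to maximum kernel. Since $n=(d-1)d+(d-1)$ we are in the case $g=d-1$, so the conditions to be checked are those listed in \eqref{trinomcondition}, one per coefficient $c_{d-1,0},\dots,c_{d-1,d-1}$. The structural observation that drives everything is that $b=a^{-\sigma^{d}e_1}$, with $e_1=\frac{\sigma^{d(d-1)}-1}{\sigma^{d}-1}$, is a power of $a$; writing $b=a^{\beta}$ with $\beta=-(\sigma^{d}+\sigma^{2d}+\dots+\sigma^{(d-1)d})$, every term occurring in \eqref{trinomcondition} becomes a power of $a$, and each equation turns into an identity of exponents in the group ring $\mathbb{Z}[\sigma]/(\sigma^{n}-1)$. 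Moreover $\N_{q^n/q}(a)=1$ reads $a^{N}=1$ with $N=1+\sigma+\dots+\sigma^{n-1}$, whence $a^{\gamma N}=1$ for every $\gamma$; thus an exponent identity only has to hold modulo the ideal $(N)$.

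First I would confirm the hypothesis $a^{\sigma^{d}}b=a^{\sigma}b^{\sigma^{d}}$ of Lemma~\ref{trianglepascal}. On exponents this is $\sigma^{d}+\beta=\sigma+\sigma^{d}\beta$, and substituting $\beta$ the difference telescopes to $\sigma^{d^{2}}-\sigma$, which vanishes because $\sigma$ has order $n=d^{2}-1$, so $\sigma^{d^{2}}=\sigma$. This is the only place the precise shape of $b$ is used. Granting it, Lemma~\ref{trianglepascal} gives $c_{d-1,i}=\binom{d-1}{i}z_{d-1,i}^{\,n-d+1}$. Here the two remaining hypotheses enter: $q$ is even, and since $d=2^{m}$ the integer $d-1=2^{m}-1$ has an all-ones binary expansion, so Lucas' theorem yields $\binom{d-1}{i}\equiv 1\pmod 2$ for every $0\le i\le d-1$. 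Hence in characteristic $2$ all binomials collapse and $c_{d-1,i}=z_{d-1,i}^{\,n-d+1}$.

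It then remains to compute these $z$'s. Unwinding \eqref{expressionzij} along the path that takes all $a$-steps before all $b$-steps — legitimate because the hypothesis of Lemma~\ref{trianglepascal} makes every path yield the same monomial — gives, with $k=n-d+1=d(d-1)$,
$$
z_{d-1,i}^{\,n-d+1}=a^{A_i}b^{B_i},\qquad
A_i=\sum_{m=i}^{d-2}\sigma^{d(d-1-m)+i-1},\qquad
B_i=\sum_{m=0}^{i-1}\sigma^{d(d-1-m)+m-1}.
$$
Substituting $b=a^{\beta}$ and reducing exponents modulo $\sigma^{n}-1$ and then modulo $N$ should reproduce the right-hand sides of \eqref{trinomcondition}, all signs being trivial in characteristic $2$. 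As a sanity check, for $i=1$ one finds $A_1+\beta B_1\equiv-1$, matching $c_{d-1,1}=a^{-1}$; and for $i=0$ the target $(\beta-1)\sum_{l=0}^{d-2}\sigma^{l}$ equals $A_0$ precisely after using $a^{N}=\N_{q^n/q}(a)=1$, which is also consistent with the first relation of Corollary~\ref{cor:neccond} and with \eqref{cd-10}.

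The main obstacle is the middle range $1\le i\le d-1$: one must show that, after substituting $\beta$ and collapsing the double sums modulo $\sigma^{n}-1$, the exponent $A_i+\beta B_i$ agrees modulo $N$ with $\beta\frac{\sigma^{i-1}-1}{\sigma-1}-\frac{\sigma^{i}-1}{\sigma-1}$, simultaneously for all $i$. This is delicate but purely mechanical exponent bookkeeping — tracking which residues modulo $n$ are hit by the sums and repeatedly absorbing multiples of $N$ via $\N_{q^n/q}(a)=1$ — and it is the one place where real computation, rather than a structural argument, is needed.
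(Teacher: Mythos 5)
Your strategy is the same as the paper's: reduce to the system \eqref{trinomcondition} via Theorem \ref{th:main}, check the commutation relation $a^{\sigma^d}b=a^{\sigma}b^{\sigma^d}$ so that Lemma \ref{trianglepascal} applies and gives $c_{d-1,i}=\binom{d-1}{i}z_{d-1,i}$, kill the binomial coefficients with Lucas' theorem in characteristic $2$, and then verify that the $z_{d-1,i}$ themselves match the right-hand sides of \eqref{trinomcondition}. The parts you actually carry out are correct: the telescoping verification of $\sigma^d+\beta=\sigma+\sigma^d\beta$ using $\sigma^{d^2}=\sigma$ is exactly the first half of Lemma \ref{finalteorecurs}, the Lucas argument is the paper's, your closed form $z_{d-1,i}=a^{A_i}b^{B_i}$ (obtained by running the recursion \eqref{expressionzij} along the diagonal first and then across) is a valid evaluation of the path-independent monomial, and your checks at $i=0$ and $i=1$ agree with \eqref{cd-10} and \eqref{calcolocd-11}.

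The gap is that the one genuinely nontrivial step --- showing that $A_i+\beta B_i$ reduces, modulo $\sigma^n-1$ (and, for $i=0$, modulo the norm relation), to $\beta\frac{\sigma^{i-1}-1}{\sigma-1}-\frac{\sigma^{i}-1}{\sigma-1}$ for \emph{every} $i$ in the range $2\le i\le d-1$ --- is asserted to be ``purely mechanical'' but never performed. This is precisely where the paper does real work: it proves a second, separate identity in Lemma \ref{finalteorecurs}, namely
$b^{\sigma^{d(r-u)+u-1}}a^{-\sum_{i=1}^{r-u}\sigma^{id+(u-1)}}=a^{-\sigma^u}b^{-\sigma^{u-1}}a^{-\sum_{i=1}^{r-u-1}\sigma^{id+u}}$,
and applies it $r-1$ times in \eqref{calcolocd-1k} to telescope $c_{d-1,r}$ down to $\binom{d-1}{r}\prod_{i=0}^{r-2}b^{\sigma^i}\prod_{i=0}^{r-1}a^{-\sigma^i}$. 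Your sanity checks at $i=0,1$ do not exercise this mechanism (the $i=1$ case collapses after a single substitution), so they give no evidence that the bookkeeping closes for the middle range; each term of $\beta B_i$ is a double sum of $d$-th powers of $\sigma$ whose reduction modulo $\sigma^{d^2-1}-1$ must be shown to cancel against $A_i$ in a structured way, not merely ``modulo $N$''. Until you either prove the analogue of that identity or exhibit the exponent cancellation explicitly for general $i$, the proof is incomplete at its central point.
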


\begin{lemma}\label{finalteorecurs}
If $a,b \in \mathbb{F}_{q^n}$ with $b=a^{-\sigma^d\frac{\sigma^{d(d-1)}-1}{\sigma^d-1}}$ then
$$a^{\sigma^{d}}b=a^{\sigma}b^{\sigma^{d}},$$
and hence
$$a^{\sigma^{k-1}}b^{\sigma^{k-d-1}}=a^{\sigma^{k-d}}b^{\sigma^{k-1}},$$
for every positive integer $k\geq d+1$.
Moreover, for every $r\in\{1,\ldots, d-1\}$ and $u\leq r-1$,
\[b^{\sigma^{d(r-u)+u-1}}a^{-\sum_{i=1}^{r-u} \sigma^{id+(u-1)}}=a^{-\sigma^u}b^{-\sigma^{u-1}}a^{-\sum_{i=1}^{r-u-1}\sigma^{id+u} }.\]
\end{lemma}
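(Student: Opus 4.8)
The plan is to do everything at the level of exponents, viewing each $\sigma$-power as an element of the group ring $\mathbb{Z}[\sigma]/(\sigma^n-1)$ acting on $a$. Writing $S:=\sigma^d+\sigma^{2d}+\cdots+\sigma^{(d-1)d}$, the hypothesis reads simply $b=a^{-S}$, because $\sigma^d\cdot\frac{\sigma^{d(d-1)}-1}{\sigma^d-1}=\sigma^d(1+\sigma^d+\cdots+\sigma^{(d-2)d})=S$. The one structural fact I would exploit throughout is that in the setting of Theorem \ref{zullopaultheo} we have $n=(d-1)d+(d-1)=d^2-1$, so $\sigma$ has order $d^2-1$ and hence $\sigma^{d^2}=\sigma$; equivalently $\sigma^{jd}=\sigma^{(j-d)d+1}$ for $j\geq d$. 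This single folding rule drives all three computations.

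For the first identity I would substitute $b=a^{-S}$ to turn both sides into powers of $a$: the left-hand exponent is $\sigma^d-S$ and the right-hand one is $\sigma-S\sigma^d$. Expanding $S\sigma^d=\sigma^{2d}+\cdots+\sigma^{(d-1)d}+\sigma^{d^2}$ and replacing $\sigma^{d^2}$ by $\sigma$, both exponents collapse to $-(\sigma^{2d}+\cdots+\sigma^{(d-1)d})$, which proves $a^{\sigma^d}b=a^\sigma b^{\sigma^d}$ with no hypothesis on $p$. This is exactly the condition needed to invoke Lemma \ref{trianglepascal}, which is presumably why it is isolated here. The ``and hence'' statement is then immediate: applying $\sigma^{k-d-1}$ to both sides sends $a^{\sigma^d}\mapsto a^{\sigma^{k-1}}$, $b\mapsto b^{\sigma^{k-d-1}}$, $a^\sigma\mapsto a^{\sigma^{k-d}}$ and $b^{\sigma^d}\mapsto b^{\sigma^{k-1}}$, giving $a^{\sigma^{k-1}}b^{\sigma^{k-d-1}}=a^{\sigma^{k-d}}b^{\sigma^{k-1}}$ for every $k\geq d+1$.

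For the ``Moreover'' identity I would set $m:=r-u\in\{1,\ldots,d-1\}$, substitute $b=a^{-S}$ once more, and factor out the common $\sigma^{u-1}$, so that it suffices to compare two exponents in $\mathbb{Z}[\sigma]/(\sigma^n-1)$. The left-hand exponent (after removing $\sigma^{u-1}$) is $-S\sigma^{dm}-\sum_{i=1}^m\sigma^{id}$ and the right-hand one is $S-\sigma-\sum_{i=1}^{m-1}\sigma^{id+1}$. The heart of the calculation is rewriting $S\sigma^{dm}=\sum_{j=m+1}^{m+d-1}\sigma^{jd}$: its top $m$ terms (those with $j\geq d$) fold back via $\sigma^{jd}=\sigma^{(j-d)d+1}$ to $\sigma+\sum_{i=1}^{m-1}\sigma^{id+1}$, while the remaining terms combine with $\sum_{i=1}^m\sigma^{id}$ to give exactly $S$; the left-hand exponent thus becomes $-S-\sigma-\sum_{i=1}^{m-1}\sigma^{id+1}$.

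The main obstacle is then plainly visible: this left-hand exponent and the right-hand exponent $S-\sigma-\sum_{i=1}^{m-1}\sigma^{id+1}$ differ by $-2S$, so the two sides of the claimed identity differ by $a^{-2\sigma^{u-1}S}$, a power of $a^2$. Hence the ``Moreover'' clause is not characteristic-free — a direct check already for $d=3$, $r=1$, $u=0$ shows it fails in odd characteristic — and I would read it in the ambient even-$q$ setting of Theorem \ref{zullopaultheo}, where $2S=0$ and the identity holds. Apart from this point the argument is careful index arithmetic, the only genuinely delicate part being to track exactly which powers $\sigma^{jd}$ exceed the modulus $d^2-1$ and must be folded.
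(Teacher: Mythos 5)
Your argument follows the same route as the paper's proof: substitute $b=a^{-S}$ with $S=\sum_{i=1}^{d-1}\sigma^{id}$, work entirely in the exponents, and fold $\sigma^{jd}$ for $j\ge d$ via $\sigma^{d^2}=\sigma$ (valid here because $n=d^2-1$). Your verification of the first identity and of the ``hence'' clause is correct and matches the paper's. For the ``Moreover'' clause your index arithmetic is also correct, and the discrepancy $a^{-2\sigma^{u-1}S}=b^{2\sigma^{u-1}}$ you isolate is real --- but its source is a sign typo in the printed statement, not a genuine characteristic restriction. What your computation (and the paper's own proof, which explicitly assembles the factor $b^{\sigma^{u-1}}$) actually establishes is
\[b^{\sigma^{d(r-u)+u-1}}a^{-\sum_{i=1}^{r-u}\sigma^{id+(u-1)}}=a^{-\sigma^u}\,b^{\sigma^{u-1}}\,a^{-\sum_{i=1}^{r-u-1}\sigma^{id+u}},\]
with $b^{+\sigma^{u-1}}$, and this version holds in every characteristic; it is also the form actually invoked in the proof of Theorem \ref{zullopaultheo}, where each application contributes a positive power of $b$ (yielding $\prod_{i=0}^{r-2}b^{\sigma^i}$ at the end). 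So rather than retreating to the even-$q$ setting you should simply correct the exponent of $b$ in the statement; your argument then proves exactly the intended lemma. A minor remark: the paper's proof tacitly assumes $u\ge 1$ (hence $r\ge 2$), which is the only range used later, so your proposed failure at $u=0$ lies outside the intended domain anyway.
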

\begin{proof}
We have
\[
\begin{split}
a^{\sigma^{d}}b & =a^{\sigma^{d}-\sum_{i=1}^{d-1}\sigma^{id}} \\
& =a^{-\sum_{i=2}^{d-1}\sigma^{id}} \\
& =a^{-\sum_{i=2}^{d}\sigma^{id}+\sigma^{d^2} } \\
 & =a^{-\sum_{i=2}^{d}\sigma^{id}+\sigma } \pmod{a^{\sigma^n}-a} \\
& =a^{\sigma}a^{-\sigma^{d}\sum_{i=2}^{d}\sigma^{(i-1)d} }  \\
 & =a^{\sigma}b^{\sigma^{d}}. \\
 \end{split}
\]
For the second part, using $b=a^{-\sigma^d\frac{\sigma^{d(d-1)}-1}{\sigma^d-1}}$, $\sigma^n=\mathrm{id}$ and $n=d^2-1$, we have
\[ 
\begin{split}
b^{\sigma^{d(r-u)+u-1}}& =a^{-\sum_{i=1}^{d-1}\sigma^{d(r-u+i)+u-1}} \\
&=a^{-\sigma^{u-1}(\sum_{i=1}^{u-r+d-1}\sigma^{d(r-u+i)}     +      \sum_{i=u-r+d}^{d-1}\sigma^{d(r-u+i)}        ) } \\
&=a^{-\sigma^{u-1}(\sum_{i=1}^{u-r+d-1}\sigma^{d(r-u+i)})}a^{-\sigma^{u-1}(\sum_{i=u-r+d}^{d-1}\sigma^{d(r-u+i)}) },
\end{split}
\]
and note that $d+u-r-1 \leq d-2$
and $u+d-r-1 \geq u \geq 1$.\\

Putting together
$a^{-\sum_{i=1}^{r-u} \sigma^{id+(u-1)}}$ and  $a^{-\sigma^{u-1}(\sum_{i=1}^{u-r+d-1}\sigma^{d(r-u+i)})}$
we obtain $b^{\sigma^{u-1}}$.

Now consider $a^{-\sigma^{u-1}(\sum_{i=u-r+d}^{d-1}\sigma^{d(r-u+i)}) }$.
Then it is equal to
$$a^{-\sigma^{u-1}(\sum_{i=d}^{d-1+r-u}\sigma^{id}) }=a^{-\sigma^{u-1}(\sum_{i=0}^{r-u-1}\sigma^{id+1}) }=a^{-\sigma^u}a^{-\sum_{i=1}^{r-u-1}\sigma^{id+u} }.$$
So, the assertion has been proved.
\end{proof}

The first part of the previous lemma allows us to apply Lemma \ref{trianglepascal}.

\begin{remark} \label{expressionznd2}
With $k=n-d+1$ and $n=d(d-1)+d-1=d^2-1$, $z_{j,i}^k$ in \eqref{expressionzij}, setting $z_{j,i}=z_{j,i}^{n-d+1}$, becomes:
\begin{equation}
z_{j,i}=z_{j,i}^{n-d+1}=\begin{cases}
1 & \mbox{if }j=i=0, \\
z_{j-1,0}a^{\sigma^{d(d-j)-1}} & \mbox{for }i=0,  \\
z_{j-1,i}a^{\sigma^{d(d-j)+i-1}}=z_{j-1,i-1}b^{\sigma^{d(d-j)+i-2}} & \mbox{for }0<i<j, \\
z_{j-1,j-1}b^{\sigma^{d(d-j)+j-2}} & \mbox{for }i=j.
\end{cases} 
\end{equation}
Furthermore, by Lemma \ref{trianglepascal} we have $$c_{j,i}=\binom{j}{i}z_{j,i}.$$
\end{remark}

In the next lemma we compute the $c_{d-1,i}$'s.

\begin{lemma}\label{lemma:ci,jfin}
For any $r\in\{2,\ldots, d-1\}$, $$c_{d-1,r}=\binom{d-1}{r}\prod_{v=1}^{r-1} b^{\sigma^{(r-v)d+v-1}} a^{-\sum_{i=1}^{r-1}\sigma^{id}}a^{-1}.$$
and
\begin{equation} %\label{calcolocd-11}
    c_{d-1,1}=\binom{d-1}{1}a^{-1} .
\end{equation}
\end{lemma}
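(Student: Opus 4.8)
The plan is to evaluate the coefficient $c_{d-1,r}$ directly from the closed form in Equation~\eqref{eqckjt}, specialized to $j=d-1$, $k=n-d+1$, and $n=d^2-1$, under the hypothesis $b=a^{-\sigma^d\frac{\sigma^{d(d-1)}-1}{\sigma^d-1}}$. Since this hypothesis gives $a^{\sigma^d}b=a^{\sigma}b^{\sigma^d}$ by the first part of Lemma~\ref{finalteorecurs}, Lemma~\ref{trianglepascal} applies, so I would work instead with the factorization $c_{d-1,r}=\binom{d-1}{r}z_{d-1,r}$ from Remark~\ref{expressionznd2}, reducing the problem to computing the $z$-factor. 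This is cleaner than manipulating the full multinomial sum, because the binomial coefficient has already been pulled out and $z_{d-1,r}$ is a single monomial in $a$ and $b$.

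\textbf{Unwinding the recursion for $z_{d-1,r}$.} First I would get a closed expression for $z_{j,i}$ by unrolling the recursion~\eqref{expressionzij} (in its specialized form from Remark~\ref{expressionznd2}). The idea is that each $z_{j,i}$ is built from $z_{0,0}=1$ by a lattice path: going from level $j-1$ to level $j$ one either keeps the same $i$ and multiplies by a power of $a$, or increases $i$ by one and multiplies by a power of $b$. Because Lemma~\ref{trianglepascal} guarantees all such paths agree, I may choose the most convenient path to reach $(d-1,r)$: take the $b$-steps first (raising the second index from $0$ to $r$) and then the $a$-steps. Tracking the exponents of $\sigma$ along this chosen path yields $z_{d-1,r}$ as a product of $r$ factors of $b$ and $(d-1-r)$ factors of $a$, each with an explicit $\sigma$-exponent read off from~\eqref{expressionzij}.

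\textbf{Simplifying via the norm condition and the exponent identity.} The raw expression will contain $\sigma$-exponents that run past $n=d^2-1$; here I would use $\sigma^n=\mathrm{id}$ together with $\N_{q^n/q}(a)=1$ (equivalently $a^{\frac{\sigma^n-1}{\sigma-1}}=1$) to collapse long products of $a$. The key algebraic engine is the second identity of Lemma~\ref{finalteorecurs}, namely
\[
b^{\sigma^{d(r-u)+u-1}}a^{-\sum_{i=1}^{r-u}\sigma^{id+(u-1)}}=a^{-\sigma^u}b^{-\sigma^{u-1}}a^{-\sum_{i=1}^{r-u-1}\sigma^{id+u}},
\]
which I expect to apply telescopically to rewrite the $a$-exponents in $z_{d-1,r}$ and fold them into the desired target form $\prod_{v=1}^{r-1}b^{\sigma^{(r-v)d+v-1}}\,a^{-\sum_{i=1}^{r-1}\sigma^{id}}\,a^{-1}$. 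I would verify the base cases $r=1$ (which is immediate from $z_{d-1,1}$ and gives $\binom{d-1}{1}a^{-1}$) and $r=2$ by hand, then argue the general case by induction on $r$, with the identity above providing the inductive step.

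\textbf{Main obstacle.} The hard part will be the exponent bookkeeping: matching the $\sigma$-powers produced by unrolling the recursion against those in the target product, while correctly handling the reductions modulo $\sigma^n=\mathrm{id}$ and the norm-one relation. The inequalities $d+u-r-1\le d-2$ and $u+d-r-1\ge u\ge 1$ noted in the proof of Lemma~\ref{finalteorecurs} are exactly the range constraints that make the splitting of the exponent sum legitimate, so the delicate point is ensuring the telescoping identity is applied only within its valid range of indices. Once the exponents are aligned, the factor $\binom{d-1}{r}$ carries through unchanged, giving the claimed formula.
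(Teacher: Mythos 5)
Your overall strategy --- reduce to $c_{d-1,r}=\binom{d-1}{r}z_{d-1,r}$ via Remark \ref{expressionznd2} and unroll the recursion \eqref{expressionzij} along a single lattice path --- is exactly the paper's. The one substantive difference is which path you pick, and it matters more than you might expect. The paper peels off the $b$-steps from the top, repeatedly applying $z_{j,i}=z_{j-1,i-1}b^{\sigma^{d(d-j)+i-2}}$ to get $z_{d-1,r}=\prod_{v=0}^{r-1}b^{\sigma^{(r-v)d+v-1}}z_{d-r-1,0}$ and then expanding $z_{d-r-1,0}$ as the pure $a$-product $a^{\sigma^{d(r+1)-1}+\cdots+\sigma^{d(d-1)-1}}$; this produces the target's $b$-exponents $\sigma^{(r-v)d+v-1}$ verbatim for $v=1,\ldots,r-1$, so the only remaining work is to absorb the single extra factor $b^{\sigma^{rd-1}}$ into the $a$-product using $b=a^{-\sigma^d\frac{\sigma^{d(d-1)}-1}{\sigma^d-1}}$ and $\sigma^{d^2-1}=\mathrm{id}$, which immediately yields $a^{-1}a^{-\sum_{i=1}^{r-1}\sigma^{id}}$. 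Your path ($b$-steps first from the bottom) instead gives $b$-exponents of the form $\sigma^{d(d-j)+j-2}$ for $j=1,\ldots,r$, which do not match the target at all, so the ``exponent bookkeeping'' you defer is genuinely heavier than in the paper. Two further points: the second identity of Lemma \ref{finalteorecurs} is not the right tool for that bookkeeping --- in the paper it is reserved for the subsequent reduction of $c_{d-1,r}$ to $\binom{d-1}{r}\prod_{i=0}^{r-2}b^{\sigma^{i}}\prod_{i=0}^{r-1}a^{-\sigma^{i}}$ inside the proof of Theorem \ref{zullopaultheo}, not for this lemma --- and the condition $\N_{q^n/q}(a)=1$ is neither a hypothesis of this lemma nor needed in its proof; only $\sigma^n=\mathrm{id}$ and the defining relation for $b$ are used. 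None of this is fatal, since path-independence (guaranteed by Lemma \ref{trianglepascal}) means your monomial does equal the paper's, but as written the decisive simplification step is not carried out and the tools you name for it would not do the job directly.
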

\begin{proof}
By Remark \ref{expressionznd2}, for any $r\geq 2$

\[ \begin{split}
c_{d-1,r}& = \binom{d-1}{r}b^{\sigma^{d+r-2}}z_{d-2,r-1} \\
&=\binom{d-1}{r}b^{\sigma^{d+r-2}}b^{\sigma^{2d+r-3}}z_{d-3,r-2} \\
&=\binom{d-1}{r}\prod_{v=0}^{r-1} b^{\sigma^{(r-v)d+v-1}}z_{d-(r+1),0}\\
\end{split}
\]
and then applying Remark \ref{expressionznd2} to the case $z_{j,0}$ it follows
\[ \begin{split}
c_{d-1,r}
&=\binom{d-1}{r}\prod_{v=0}^{r-1} b^{\sigma^{(r-v)d+v-1}} a^{\sigma^{d(d-(d-(r+1)))-1}+\cdots+\sigma^{d(d-1)-1}}   \\
&=\binom{d-1}{r}\prod_{v=0}^{r-1} b^{\sigma^{(r-v)d+v-1}}a^{\sigma^{d(r+1)-1}+\cdots+\sigma^{d(d-1)-1}}      \\
\end{split}
\]

From $v=0$, in the product appears
$b^{\sigma^{rd-1}}a^{\sigma^{d(r+1)-1}+\cdots+\sigma^{d(d-1)-1}}$ which, using $b=a^{-\sigma^d\frac{\sigma^{d(d-1)}-1}{\sigma^d-1}}$, equals to
$$ a^{-\sigma^{d^2-1}-\sigma^{d(d+1)-1}-\cdots-\sigma^{d(d-1+r)-1}}=a^{-1}a^{-\sigma^{d}-\cdots-\sigma^{d(-1+r)}}, $$
so that
$$c_{d-1,r}=\binom{d-1}{r}\prod_{v=1}^{r-1} b^{\sigma^{(r-v)d+v-1}} a^{-\sum_{i=1}^{r-1}\sigma^{id}}a^{-1}.$$
When $r=1$
\begin{equation} \label{calcolocd-11}
\begin{split}
c_{d-1,1}& =\binom{d-1}{1}b^{\sigma^{d-1}}z_{d-2,0} \\
& =\binom{d-1}{1}b^{\sigma^{d-1}}a^{\sigma^{2d-1}}a^{\sigma^{3d-1}}\cdots a^{\sigma^{d(d-1)-1}}\\
& =\binom{d-1}{1}a^{-\sum_{i=1}^{d-1}\sigma^{(i+1)d-1}+\sigma^{2d-1}+\sigma^{3d-1}+\cdots +\sigma^{d(d-1)-1}} \\
& =\binom{d-1}{1}a^{-\sigma^{d^2-1}}\\
&=\binom{d-1}{1}a^{-1}. 
\end{split}
\end{equation}
\end{proof}

Now we are ready to prove Theorem \ref{zullopaultheo}.\\

\emph{Proof of Theorem \ref{zullopaultheo}}\\
By Lemma \ref{lemma:ci,jfin}, for any $r\geq 2$ it follows
$$c_{d-1,r}=\binom{d-1}{r}\prod_{v=1}^{r-1} b^{\sigma^{(r-v)d+v-1}} a^{-\sum_{i=1}^{r-1}\sigma^{id}}a^{-1}.$$
Applying $r-1$ times Lemma \ref{finalteorecurs} to $c_{d-1,r}$
we have that 
\begin{equation}\label{calcolocd-1k}
\begin{split}
c_{d-1,r}&=\binom{d-1}{r} b^{\sigma^{(r-1)d}}
a^{-\sum_{i=1}^{r-1}\sigma^{id}}\prod_{v=2}^{r-1} b^{\sigma^{(r-v)d+v-1}}a^{-1}
\\&=\binom{d-1}{r} \prod_{v=2}^{r-1} b^{\sigma^{(r-v)d+v-1}} a^{-\sum_{i=1}^{r-2}\sigma^{id+1}} b a^{-\sigma -1}
\\&=\vdots
\\&=\binom{d-1}{r}\prod_{i=0}^{r-2}b^{\sigma^{i}} \prod_{i=0}^{r-1}a^{-\sigma^{i}},
\end{split}
\end{equation}
for $r \geq 2$. 

Recall that $L(x)$ has maximum kernel if and only if $a$ and $b$ satisfy System \eqref{trinomcondition}. 
So, substituting \eqref{calcolocd-11} and \eqref{calcolocd-1k} in \eqref{trinomcondition} we obtain

\begin{equation} \label{verifysystem}
\begin{cases}
\binom{d-1}{1}a^{-1}=a^{-1} \\
%\binom{d-1}{2}ba^{-1}a^{-\sigma}=ba^{-1}a^{-\sigma}\\
\vdots \\
\binom{d-1}{r} \prod_{i=0}^{r-2}b^{\sigma^{i}} \prod_{i=0}^{r-1}a^{-\sigma^{i}}=\prod_{i=0}^{r-2}b^{\sigma^{i}} \prod_{i=0}^{r-1}a^{-\sigma^{i}} \\
\vdots \\
c_{d-1,0}= b^{\frac{\sigma^{d-1}-1}{\sigma-1}}a^{-\frac{\sigma^{d-1}-1}{\sigma-1}} \\
\end{cases}
\end{equation}

From \eqref{cd-10} we have
$$c_{d-1,0}=a^{\frac{\sigma^{d^2-1}-\sigma^{d-1}}{\sigma^d-1}},$$
so that the last equation of \eqref{verifysystem}, since $\sigma^{d^2}=\sigma$, becomes
$$a^{\frac{\sigma^{d^2-1}-\sigma^{d-1}}{\sigma^d-1}}=b^{\frac{\sigma^{d-1}-1}{\sigma-1}}a^{-\frac{\sigma^{d-1}-1}{\sigma-1}},$$
that is
$$a^{\frac{\sigma^{d^2-1}-\sigma^{d-1}}{\sigma^d-1}}=a^{-\frac{\sigma^{d^2}-\sigma^d}{\sigma^d-1}\frac{\sigma^{d-1}-1}{\sigma-1}}a^{-\frac{\sigma^{d-1}-1}{\sigma-1}},$$
which is equivalent to $\N_{q^n/q}(a)=1$ and so the last equation of System \eqref{verifysystem} is satisfied.

By Lucas's Theorem we have
\[ \binom{d-1}{r}=\prod_{v=0}^u \binom{d_v}{r_v} \pmod{2}, \]
where $d-1=d_u 2^u+\ldots+ d_1 2+ d_0$ and $r=r_u 2^u+\ldots+ r_1 2+ r_0$.
As $d$ is a power of $2$, namely $d=2^{u+1}$, then
\[ d-1=2^{u}+\ldots+2+1, \]
so that $d_v=1$ for each $v \in \{0,\ldots,u\}$ and Lucas's Theorem implies that $\binom{d-1}{r}=1$.
Therefore, the remaining equations of System \eqref{verifysystem} are satisfied. \qed

\section{$\sigma$-degree $3$ and $4$}\label{sec:d=3,4}

In this section we will make use of Theorems \ref{th:mainMcG} and \ref{th:main} in order to to give explicit relations on $a,b \in \F_{q^n}$ ensuring that the polynomial $L(x)=ax+bx^{\sigma}-x^{\sigma^d}$ has maximum kernel when $d\in \{3,4\}$.

\begin{proposition}\label{prop:d=3}
Let $L(x)=ax+bx^{\sigma}-x^{\sigma^3}\in \mathcal{L}_{n,q,\sigma}$ with $\sigma$ a generator of $\mathrm{Gal}(\F_{q^n}\colon\F_q)$.
\begin{itemize}
    \item If $n\leq 5$ and $n \neq 3$ then $\dim_{\F_q}(\ker(L(x)))<3$;
    \item if $n\in \{3,6\}$ then $\dim_{\F_q}(\ker(L(x)))=3$ if and only if $b=0$ and $\N_{q^6/q^3}(a)=1$;
    \item if $n=7$ then $\dim_{\F_q}(\ker(L(x)))=3$ if and only if
    \[\begin{cases} 
    \N_{q^7/q}(a)=1,\\
    b=-a^{\sigma(1+\sigma^3+\sigma^6)},\\
    q\,\,\text{is even};
    \end{cases}\]
    \item if $n=8$ then $\dim_{\F_q}(\ker(L(x)))=3$ if and only if
    \[ \begin{cases}
    \N_{q^8/q}(a)=1,\\
    q \equiv 0,2 \pmod{3},\\
    b=\frac{-\alpha}{a^{\sigma^6+\sigma^3}},\\
    \alpha\,\,\text{is a root of}\,\, x^2+x+1.
    \end{cases}\]
\end{itemize}
\end{proposition}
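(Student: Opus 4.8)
The first three bullets follow from results already in hand, so the plan is to dispatch them quickly and concentrate on $n=8$. For $n\in\{1,2\}$ the claim is immediate, since $\dim_{\F_q}(\ker(L))\leq \dim_{\F_q}(\F_{q^n})=n<3$. For $n\in\{4,5\}$ we have $3\nmid n$ and $n\leq 6=d(d-1)$, so part (a) of Theorem~\ref{th:mainMcG} gives $\dim_{\F_q}(\ker(L))<3$. For $n\in\{3,6\}$ we have $3\mid n$ and $n\leq 6$, so part (b) of Theorem~\ref{th:mainMcG} yields maximum kernel exactly when $b=0$ and $\N_{q^n/q^3}(a)=1$ (for $n=3$ this degenerates to $a=1$, $b=0$). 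Finally $n=7=d(d-1)+1$, so part (c) of Theorem~\ref{th:mainMcG} applies with $d=3$: there $(-1)^{d-1}=1$, the exponent $f_1=\sum_{i=0}^{2}\sigma^{3i}=1+\sigma^3+\sigma^6$, and ``$d-1=2$ is a power of $p$'' is equivalent to $q$ being even, which is precisely the stated system.

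The substantive case is $n=8=d(d-1)+g$ with $d=3$ and $g=2=d-1$, so Theorem~\ref{th:main} (in the form of System~\eqref{trinomcondition}) governs it. First I would record the three coefficients $c_{2,t}$, either by two applications of the recursion~\eqref{recurvisel} with $k=n-d+1=6$, or by reading off the closed forms of Section~\ref{sec:neccond}; this gives $c_{2,0}=a^{\sigma^2+\sigma^5}$, $c_{2,1}=a^{\sigma^5}b^{\sigma^2}+a^{\sigma^3}b^{\sigma^5}$ and $c_{2,2}=b^{\sigma^3+\sigma^5}$. By Theorem~\ref{th:main}, $L$ has maximum kernel if and only if the system
\[
\text{(I)}\ \ a^{\sigma^5}b^{\sigma^2}+a^{\sigma^3}b^{\sigma^5}=a^{-1},\quad \text{(II)}\ \ b^{\sigma^3+\sigma^5}=-b\,a^{-(1+\sigma)},\quad \text{(III)}\ \ a^{\sigma^2+\sigma^5}=b^{1+\sigma}a^{-(1+\sigma)}
\]
holds, where (II) and (III) are exactly the two necessary conditions of Corollary~\ref{cor:neccond} specialised to $d=3$, $g=2$.

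The heart of the argument is to solve this system. I would first note that $\N_{q^8/q}(a)=1$ comes for free from the ``furthermore'' part of Theorem~\ref{Gow}: the leading coefficient is $-1$, $nd=24$ is even, and $\N_{q^8/q}(-1)=1$. Writing $T=1+\sigma+\dots+\sigma^7$, so that $a^{T}=\N_{q^8/q}(a)=1$, the key reduction is the identity $1+\sigma+\sigma^2+\sigma^5=(1+\sigma)(-\sigma^3-\sigma^6)+T$ in $\mathbb{Z}[\sigma]/(\sigma^8-1)$; feeding it into (III) turns that equation into $\big(b\,a^{\sigma^3+\sigma^6}\big)^{1+\sigma}=1$. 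Hence $\mu:=b\,a^{\sigma^3+\sigma^6}$ satisfies $\mu^{\sigma}=\mu^{-1}$, i.e.\ $b=\mu\,a^{-\sigma^3-\sigma^6}$. Substituting this form into (I), every power of $a$ cancels and, using $\mu^{\sigma^2}=\mu$ and $\mu^{\sigma^5}=\mu^{-1}$, equation (I) collapses to $\mu+\mu^{-1}=1$, that is $\mu^2-\mu+1=0$; putting $\alpha=-\mu$ this reads $\alpha^2+\alpha+1=0$ and $b=-\alpha\,a^{-\sigma^3-\sigma^6}=-\alpha/a^{\sigma^6+\sigma^3}$. A check shows (II) reduces to $\mu^3=-1$, which is automatic from $\mu^2-\mu+1=0$, so no new constraint appears. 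The converse is the same computation read backwards: given the stated data one verifies (I)--(III) directly.

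The step I expect to be the main obstacle is pinning down the characteristic condition $q\equiv 0,2\pmod 3$ exactly. What the computation actually requires is the existence of $\mu\in\F_{q^8}$ simultaneously solving $\mu^2-\mu+1=0$ and $\mu^{\sigma}=\mu^{q^s}=\mu^{-1}$, where $s$ is odd because $\gcd(s,8)=1$. Here one must split on the characteristic: in characteristic $3$ the polynomial $x^2-x+1$ has the double root $\mu=-1$ (so $\alpha=1$); in characteristic $2$ it is $x^2+x+1$, whose roots are the primitive cube roots of unity; otherwise the roots are primitive sixth roots of unity. In each case $\mu^{q^s}=\mu^{-1}$ forces $q\equiv-1$ modulo $3$ (respectively $6$), i.e.\ $q\equiv 2\pmod 3$, while characteristic $3$ gives $q\equiv 0\pmod 3$; the residue $q\equiv 1\pmod 3$ admits no such $\mu$ and is therefore excluded. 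Getting this case analysis clean---and consistent with the constraint that $s$ be odd---is the delicate point; the exponent bookkeeping leading to (I)--(III) is laborious but routine once the reduction modulo $T$ is in place.
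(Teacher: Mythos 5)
Your proposal is correct, and for the first three bullets and the set-up of the $n=8$ case it follows the paper exactly (Theorem \ref{th:mainMcG} for $n\leq 7$, then Theorem \ref{th:main} with $d=3$, $g=2$ to get the three equations (I)--(III), which agree with the paper's System \eqref{eq:sist1}). Where you diverge is in how the system is solved. The paper first derives $\N_{q^8/q}(a)=\N_{q^8/q}(b)$ from (II) and (III), rewrites (I) as $\N_{q^8/q^2}(b)+\N_{q^8/q^2}(b)^\sigma=-1$ together with $\N_{q^8/q^2}(b)^{\sigma+1}=1$, and then splits on whether $\N_{q^8/q^2}(b)$ lies in $\F_q$ (the in-$\F_q$ branch forcing $q$ odd, $\N_{q^8/q^2}(b)=-1/2$ and then characteristic $3$; the other branch forcing $x^2+x+1$ to be irreducible over $\F_q$, i.e.\ $q\equiv 2\pmod 3$). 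You instead substitute $\mu=ba^{\sigma^3+\sigma^6}$, use $a^T=1$ to turn (III) into $\mu^{1+\sigma}=1$, collapse (I) to $\mu+\mu^{-1}=1$ and check (II) reduces to $\mu^3=-1$, then split on the characteristic. I verified your exponent identity $1+\sigma+\sigma^2+\sigma^5=(1+\sigma)(-\sigma^3-\sigma^6)+T$ and the collapse of (I); both are right, and your $\mu$ equals $-\N_{q^8/q^2}(b)$ under the stated relations, so the two arguments are reorganizations of the same computation. Your version buys a cleaner parametrization (the powers of $a$ cancel at once, and (II) is visibly redundant), while the paper's version makes the norm conditions $\N_{q^8/q}(a)=\N_{q^8/q}(b)=1$ explicit along the way; your mod-$3$ trichotomy (double root $\mu=-1$, i.e.\ $\alpha=1$, in characteristic $3$; primitive cube/sixth roots otherwise, with $\mu^{q^s}=\mu^{-1}$ forcing $q\equiv 2\pmod 3$ and $q\equiv 1$ excluded) checks out and matches the paper's conclusion, including the count of admissible $\alpha$ in each residue class.
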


\begin{proof}
By Theorem \ref{th:mainMcG}, the only part that remains to prove is the last point.
Let $n=8$. 
By Theorem \ref{th:main}, $\dim_{\F_q}(\ker(L(x)))=3$ if and only if 
\begin{equation}\label{eq:sist1}\begin{cases}
b^{\sigma^{5}}a^{\sigma^{3}}+a^{\sigma^{5}}b^{\sigma^2}=a^{-1}, \\
b^{\sigma^5+\sigma^3}=-ba^{-1-\sigma},\\
a^{\sigma^5+\sigma^2}=b^{\sigma+1}a^{-1-\sigma},
\end{cases}
\end{equation}
from which clearly we have $a,b \ne 0$.
System \eqref{eq:sist1} is equivalent to the following system
\begin{equation}\label{eq:sist2}\begin{cases}
b^{\sigma^{5}}a^{\sigma^{3}+1}+a^{\sigma^{5}+1}b^{\sigma^2}=1, \\
b^{\sigma^5+\sigma^3+\sigma}=-a^{\sigma^5+\sigma^2},\\
a^{\sigma^5+\sigma^2+\sigma+1}=b^{\sigma+1}.
\end{cases}
\end{equation}
Clearly, by the second equation of the above system we also have $b^{\sigma^6+\sigma^4+\sigma^2}=-a^{\sigma^6+\sigma^3}$ and $b^{\sigma^7+\sigma^5+\sigma^3}=-a^{\sigma^7+\sigma^4}$, and hence by the third equation of System \eqref{eq:sist2} the equation $\N_{q^8/q}(a)=\N_{q^8/q}(b)$. Therefore, System \eqref{eq:sist2} is equivalent to 
$$\begin{cases}
b^{\sigma^{5}}a^{\sigma^{3}+1}+a^{\sigma^{5}+1}b^{\sigma^2}=1, \\
b^{\sigma^5+\sigma^3+\sigma}=-a^{\sigma^5+\sigma^2},\\
\N_{q^8/q}(a)=\N_{q^8/q}(b),
\end{cases}
$$
which is equivalent to
\begin{equation}\label{eq:sist3}\begin{cases}
-b^{\sigma^{5}}b^{\sigma^3+\sigma+\sigma^7}-b^{1+\sigma^6+\sigma^4}b^{\sigma^2}=1, \\
b^{\sigma^5+\sigma^3+\sigma}=-a^{\sigma^5+\sigma^2},\\
\mathrm{N}_{q^8/q}(a)=\mathrm{N}_{q^8/q}(b).
\end{cases}
\end{equation}
By multiplying the second equation by $b^{\sigma^7}$ and rewriting the first equation, System \eqref{eq:sist3} becomes
$$\begin{cases}
\N_{q^8/q^2}(b)+\N_{q^8/q^2}(b)^\sigma=-1, \\
a^{\sigma^4+\sigma}b^{\sigma^6}=-\N_{q^8/q^2}(b),\\
\N_{q^8/q^2}(a)=\mathrm{N}_{q^8/q^2}(b).
\end{cases}
$$
We now use that $\N_{q^8/q}(a)=1$ because of Theorem \ref{Gow}, so that the above system can be written as follows
\begin{equation}\label{eq:sist4}
\begin{cases}
\N_{q^8/q^2}(b)+\N_{q^8/q^2}(b)^\sigma=-1, \\
a^{\sigma^4+\sigma}b^{\sigma^6}=-\N_{q^8/q^2}(b),\\
\N_{q^8/q}(b)=\N_{q^8/q^2}(b)^{\sigma+1}=1, \\
\N_{q^8/q}(a)=1.
\end{cases}
\end{equation}

Assume now that $\N_{q^8/q^2}(b) \in \F_q$, the first equation of the above system implies that $q$ cannot be even and $\N_{q^8/q^2}(b)=-\frac{1}{2}$, so that the third equation of System \eqref{eq:sist4} yields $q$ to be a power of $3$ and System  \eqref{eq:sist4} becomes
\[
\begin{cases}
b=\frac{-1}{a^{\sigma^6+\sigma^3}},\\
\N_{q^8/q}(a)=1.
\end{cases}
\]

Suppose that $\N_{q^8/q^2}(b) \notin \F_q$.
By the first and the third equation of System \eqref{eq:sist4} then $\N_{q^8/q^2}(b)$ and $\N_{q^8/q^2}(b)^\sigma$ are two distinct roots of
$$x^2+x+1=0.$$
This equation has two roots not in $\mathbb{F}_q$ if and only if $q\equiv 2 \pmod{3}$ by \cite[(xi) Section 1.5]{Hir}.
So, System \eqref{eq:sist4} admits at least a solution if and only if $q\equiv 2 \pmod{3}$ and the solutions are
\[
\begin{cases}
b=\frac{-\alpha}{a^{\sigma^6+\sigma^3}},\\
\N_{q^8/q}(a)=1.
\end{cases}
\]
This completes the proof.
\end{proof}

\begin{proposition}\label{prop:d=4}
Let $L(x)=ax+bx^{\sigma}-x^{\sigma^4}\in \mathcal{L}_{n,q,\sigma}$ with $\sigma$ a generator of $\mathrm{Gal}(\F_{q^6}\colon\F_q)$.
\begin{itemize}
    \item If $n\leq 11$ and $n\ne 4,8$ then $\dim_{\F_q}(\ker(L(x)))<4$;
    \item if $n\in\{4,8,12\}$ then $\dim_{\F_q}(\ker(L(x)))=4$ if and only if $b=0$ and $\N_{q^n/q^4}(a)=1$;
    \item if $n=13$ then $\dim_{\F_q}(\ker(L(x)))=4$ if and only if
    \[\begin{cases} 
    \N_{q^n/q}(a)=-1,\\
    b=-a^{\sigma(1+\sigma^4+\sigma^8+\sigma^{12})},\\
    q\,\,\text{is a power of}\,\, 3;
    \end{cases}\]
    \item if $n=14$ then $\dim_{\F_q}(\ker(L(x)))=4$ if and only if
    \[ \begin{cases}
    a c_{3,2}=1,\\
    a^{1+\sigma}b^{\sigma^6+\sigma^3}=-b,\\
    a^{1+\sigma+\sigma^2+\sigma^6+\sigma^{10}}=b^{1+\sigma},\\
    c_{3,1}=0,
    \end{cases}\]
where $$c_{3,2}=a^{\sigma^{10}}b^{\sigma^6}b^{\sigma^3}+b^{\sigma^{10}}a^{\sigma^7}b^{\sigma^3}+b^{\sigma^{10}}b^{\sigma^7}a^{\sigma^4},$$
and 
$$c_{3,1}=a^{\sigma^{10}}a^{\sigma^6}b^{\sigma^2}+a^{\sigma^{10}}b^{\sigma^6}a^{\sigma^3}+b^{\sigma^{10}}a^{\sigma^7}a^{\sigma^3}.$$
    \item if $n=15$ then $\dim_{\F_q}(\ker(L(x)))=4$ if and only if
    \[ \begin{cases}
    \N_{q^{15}/q}(a)=1,\\
    b=\frac{1}{a^{\sigma^4+\sigma^8+\sigma^{12}}},\\
    q\,\,\text{is even}.\\
    \end{cases}\]
\end{itemize}
\end{proposition}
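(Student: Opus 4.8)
The proposition splits into five ranges of $n$ around the threshold $d(d-1)=12$ for $d=4$, and the plan is to dispatch each range by specializing a general theorem already proved in the paper, reserving real work only for $n=15$. For the first three bullets (all $n\le 13$) I would simply invoke Theorem \ref{th:mainMcG} with $d=4$, so that $d-1=3$: part (a) gives dimension $<4$ for $n\le 11$ with $4\nmid n$; part (b) gives the conditions $b=0$ and $\N_{q^n/q^4}(a)=1$ for $n\in\{4,8,12\}$; and part (c) handles $n=13=d(d-1)+1$, where the three displayed conditions are exactly $\N_{q^{13}/q}(a)=(-1)^{d-1}=-1$, the relation $b=-a^{\sigma f_1}$ with $f_1=1+\sigma^4+\sigma^8+\sigma^{12}$, and ``$d-1=3$ is a power of $p$'', i.e. $q$ a power of $3$. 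No fresh computation is required here.

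For $n=14$ we have $g=n-d(d-1)=2$, lying in the regime $g\le d-2$ of Theorem \ref{th:main}. I would write out the system of that theorem for $t=0,1,2,3$: the three nonzero right-hand sides read $c_{3,2}=a^{-1}$, $c_{3,3}=-b\,a^{-1-\sigma}$ and $c_{3,0}=b^{1+\sigma}a^{-(1+\sigma)}$, while the remaining one is the vanishing $c_{3,1}=0$. Using \eqref{cd-10} to identify $c_{3,0}=a^{\sigma^2 e_1}=a^{\sigma^2+\sigma^6+\sigma^{10}}$, rearranging these four equalities produces precisely the displayed conditions $a c_{3,2}=1$, $a^{1+\sigma}b^{\sigma^6+\sigma^3}=-b$, $a^{1+\sigma+\sigma^2+\sigma^6+\sigma^{10}}=b^{1+\sigma}$ and $c_{3,1}=0$; substituting $k=n-d+1=11$ and $j=3$ into the closed formula \eqref{eqckjt} then yields the stated explicit expressions for $c_{3,2}$ and $c_{3,1}$. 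This step is pure bookkeeping.

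The case $n=15$, where $g=d-1=3$, is the only genuinely new work, and its ``only if'' direction is the main obstacle. The ``if'' direction is immediate from Theorem \ref{zullopaultheo}: since $d=4$ is a power of $2$, for $q$ even the hypotheses $\N_{q^{15}/q}(a)=1$ and $b=a^{-\sigma^4(1+\sigma^4+\sigma^8)}=1/a^{\sigma^4+\sigma^8+\sigma^{12}}$ force maximum kernel. For the converse I would start from system \eqref{trinomcondition} with $d=4$, use \eqref{cd-10} and \eqref{cd-1d-1} to rewrite the $c_{3,0}$ and $c_{3,3}$ equations (equivalently, invoke Corollary \ref{cor:neccond}), and eliminate so as to force $b=1/a^{\sigma^4+\sigma^8+\sigma^{12}}$, after which $\N_{q^{15}/q}(a)=1$ is consistent with Theorem \ref{Gow} (for $q$ even one has $\N_{q^{15}/q}(-1)=1$).

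The decisive point is then a characteristic argument. Once $b$ has the above shape, Lemma \ref{finalteorecurs} shows $a^{\sigma^4}b=a^{\sigma}b^{\sigma^4}$, so Lemma \ref{trianglepascal} and Lemma \ref{lemma:ci,jfin} apply and give $c_{3,1}=\binom{3}{1}a^{-1}=3a^{-1}$; comparing with the requirement $c_{3,1}=a^{-1}$ coming from \eqref{trinomcondition} forces $3=1$ in $\F_q$, i.e. $p=2$ and $q$ even. I expect the fiddliest part to be the elimination that pins down $b$ as a fixed power of $a$ (and the bookkeeping needed before the commutativity relation $a^{\sigma^4}b=a^{\sigma}b^{\sigma^4}$ becomes available), whereas the binomial obstruction $\binom{3}{1}=3$ is what cleanly excludes odd characteristic and thereby completes the characterization.
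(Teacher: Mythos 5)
Your treatment of the first four bullets matches the paper (the first three are indeed just Theorem \ref{th:mainMcG} with $d=4$, and $n=14$ is a transcription of Theorem \ref{th:main} with $g=2$), and the ``if'' direction for $n=15$ via Theorem \ref{zullopaultheo} is fine. The gap is in the ``only if'' direction for $n=15$, at the step you dismiss as the ``elimination that pins down $b$ as a fixed power of $a$''. What the equations of Theorem \ref{th:main} actually yield, after combining the $c_{3,0}$ and $c_{3,3}$ relations, is only $b=-\N_{q^{15}/q^3}(b)\, a^{-(\sigma^4+\sigma^8+\sigma^{12})}$, where the constant $-\N_{q^{15}/q^3}(b)$ is an a priori unknown element of $\F_{q^3}$. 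Showing that this constant equals $1$ is not bookkeeping: it is the heart of the paper's proof, which substitutes this expression back into the remaining two equations to obtain polynomial conditions on $N:=\N_{q^{15}/q^3}(b)$ and its conjugates, and then runs a case analysis ($N\in\F_q$ versus $N\notin\F_q$, with a further subcase for $p=3$ using the roots of $x^3+x^2+x+1$) that \emph{simultaneously} produces $N=1$ and $q$ even. You cannot first fix $b$ and only afterwards deduce the characteristic.

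This also undermines your closing binomial argument. Lemmas \ref{finalteorecurs}, \ref{trianglepascal} and \ref{lemma:ci,jfin} are proved for $b=a^{-\sigma^4(\sigma^{12}-1)/(\sigma^4-1)}$ exactly; with $b=c\,a^{-(\sigma^4+\sigma^8+\sigma^{12})}$ the commutation $a^{\sigma^4}b=a^{\sigma}b^{\sigma^4}$ only survives when $c$ is fixed by $\sigma$, so Lemma \ref{trianglepascal} is unavailable in the case $N\notin\F_q$; and when $c\in\F_q$ the same computation gives $c_{3,1}=3c\,a^{-1}$, so the condition $c_{3,1}=a^{-1}$ reads $3c=1$ --- an equation that determines $c$ rather than forcing $p=2$. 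The obstruction $\binom{3}{1}=3$ therefore excludes odd characteristic only after you already know $c=1$, which is precisely what remains to be proved. To close the argument you need the paper's norm analysis (or an equivalent one).
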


\begin{proof}
Again, by applying Theorem \ref{th:mainMcG} we get the first three points. 
Let $n=15$. By Theorem \ref{th:main} $L(x)$ has maximum kernel if and only if 
\begin{equation}\label{eq:sist41}\begin{cases}
b^{\sigma^{11}}a^{\sigma^8}a^{\sigma^4}+a^{\sigma^{11}}b^{\sigma^7}a^{\sigma^4}+a^{\sigma^{11}}a^{\sigma^7}b^{\sigma^3}=a^{-1}, \\
a^{\sigma^{11}}b^{\sigma^{7}}b^{\sigma^4}+b^{\sigma^{11}}a^{\sigma^{8}}b^{\sigma^4}+b^{\sigma^{11}}b^{\sigma^{8}}a^{\sigma^5}=-ba^{-1}a^{-\sigma},\\
b^{\sigma^5+\sigma^8+\sigma^{11}}=b^{\sigma}ba^{-1}a^{-\sigma}a^{-\sigma^2},\\
a^{\sigma^3+\sigma^7+\sigma^{11}}=-b^{\sigma^2}b^{\sigma}ba^{-1}a^{-\sigma}a^{-\sigma^2}. \\
\end{cases}
\end{equation}
By the third and the fourth equations of \eqref{eq:sist41} we get $b^{\sigma^2+\sigma^5+\sigma^8+\sigma^{11}}=-a^{\sigma^3+\sigma^7+\sigma^{11}}$ and, together with its $\sigma$ and $\sigma^2$ power and the fourth equation of \eqref{eq:sist41}, gives $\N_{q^{15}/q}(a)=\N_{q^{15}/q}(b)$.
So, \eqref{eq:sist41} is equivalent to 
$$\begin{cases}
b^{\sigma^{11}}a^{\sigma^8}a^{\sigma^4}+a^{\sigma^{11}}b^{\sigma^7}a^{\sigma^4}+a^{\sigma^{11}}a^{\sigma^7}b^{\sigma^3}=a^{-1}, \\
a^{\sigma^{11}}b^{\sigma^{7}}b^{\sigma^4}+b^{\sigma^{11}}a^{\sigma^{8}}b^{\sigma^4}+b^{\sigma^{11}}b^{\sigma^{8}}a^{\sigma^5}=-ba^{-1}a^{-\sigma},\\
b^{\sigma^2+\sigma^5+\sigma^8+\sigma^{11}}=-a^{\sigma^3+\sigma^7+\sigma^{11}},\\
\N_{q^{15}/q}(a)=\N_{q^{15}/q}(b).
\end{cases}
$$
By replacing the third equation in the first equation of the above system and by Theorem \ref{Gow} we get
\begin{equation}\label{eq:sist42}
\begin{cases}
\N_{q^{15}/q^3}(b)^{\sigma^2}+\N_{q^{15}/q^3}(b)^\sigma+\N_{q^{15}/q^3}(b)=-1, \\
a^{\sigma^{11}}b^{\sigma^{7}}b^{\sigma^4}+b^{\sigma^{11}}a^{\sigma^{8}}b^{\sigma^4}+b^{\sigma^{11}}b^{\sigma^{8}}a^{\sigma^5}=-ba^{-1}a^{-\sigma},\\
b^{\sigma^2+\sigma^5+\sigma^8+\sigma^{11}}=-a^{\sigma^3+\sigma^7+\sigma^{11}},\\
\N_{q^{15}/q}(a)=\N_{q^{15}/q}(b)=-1.
\end{cases}
\end{equation}
By the third equation we get $b=\frac{-\N_{q^{15}/q^3}(b)}{a^{\sigma^4+\sigma^8+\sigma^{12}}}$, so that by replacing such value in the second equation of System \eqref{eq:sist42} we get 
\begin{equation}\label{eq:sist43}
\begin{cases}
\N_{q^{15}/q^3}(b)^{\sigma^2}+\N_{q^{15}/q^3}(b)^\sigma+\N_{q^{15}/q^3}(b)=-1, \\
\N_{q^{15}/q^3}(b)^{2\sigma}+\N_{q^{15}/q^3}(b)^{\sigma+\sigma^2}+\N_{q^{15}/q^3}(b)^{2\sigma^2}=\N_{q^{15}/q^3}(b),\\
b=\frac{-\N_{q^{15}/q^3}(b)}{a^{\sigma^4+\sigma^8+\sigma^{12}}},\\
\N_{q^{15}/q}(a)=\N_{q^{15}/q}(b)=-1.
\end{cases}
\end{equation}
Note that if $\N_{q^{15}/q^3}(b)\in \F_q$, the square of the first equation and the sum of the second equation with its $\sigma$-powers of System \eqref{eq:sist43} implies that $q$ is not a power of $3$ and $\N_{q^{15}/q^3}(b)=-1/3$. From the second equation of System \eqref{eq:sist43} we get $q$ is even and $\N_{q^{15}/q^3}(b)=1$. So, in this case System \eqref{eq:sist43} becomes 
\[
\begin{cases}
b=\frac{1}{a^{\sigma^4+\sigma^8+\sigma^{12}}},\\
\N_{q^{15}/q}(a)=1.
\end{cases}
\]
Now, assume $\N_{q^{15}/q^3}(b)\notin \F_q$, so that $|\{\N_{q^{15}/q^3}(b),\N_{q^{15}/q^3}(b)^\sigma,\N_{q^{15}/q^3}(b)^{\sigma^2}\}|=3$.
If $q$ is not a power of $3$, then by using the first equation of System \eqref{eq:sist43}, the second equation may be replaced by the following
\[ \N_{q^{15}/q^3}(b)^{\sigma+1}+\N_{q^{15}/q^3}(b)^{\sigma^2+1}+ \N_{q^{15}/q^3}(b)^{\sigma^2+\sigma}=1.\]
Hence, System \eqref{eq:sist43} is equivalent to 
\[
\begin{cases}
\N_{q^{15}/q^3}(b)^{\sigma^2}+\N_{q^{15}/q^3}(b)^\sigma+\N_{q^{15}/q^3}(b)=-1, \\
\N_{q^{15}/q^3}(b)^{\sigma+1}+\N_{q^{15}/q^3}(b)^{\sigma^2+1}+ \N_{q^{15}/q^3}(b)^{\sigma^2+\sigma}=1,\\
b=\frac{-\N_{q^{15}/q^3}(b)}{a^{\sigma^4+\sigma^8+\sigma^{12}}},\\
\N_{q^{15}/q}(a)=\N_{q^{15}/q}(b)=-1,
\end{cases}
\]
that is $\N_{q^{15}/q^3}(b),\N_{q^{15}/q^3}(b)^\sigma,\N_{q^{15}/q^3}(b)^{\sigma^2}$ are distinct solutions of
\[ x^3+x^2+x+1=0 \]
not lying in $\F_q$.
Hence the contradiction follows by noting that $-1$ is a root of $x^3+x^2+x+1$.
Let $q$ be a power of $3$. Arguing as before, one have that System \eqref{eq:sist43} is equivalent to
\begin{equation}\label{eq:sist44}
\begin{cases}
\N_{q^{15}/q^3}(b)^{\sigma^2}+\N_{q^{15}/q^3}(b)^\sigma+\N_{q^{15}/q^3}(b)=-1, \\
\N_{q^{15}/q^3}(b)^{\sigma+1}+\N_{q^{15}/q^3}(b)^{\sigma^2+1}+ \N_{q^{15}/q^3}(b)^{\sigma^2+\sigma}\\
\hskip 2 cm =-1+\N_{q^{15}/q^3}(b)^2+\N_{q^{15}/q^3}(b)^{2\sigma}+\N_{q^{15}/q^3}(b)^{2\sigma^2},\\
b=\frac{-\N_{q^{15}/q^3}(b)}{a^{\sigma^4+\sigma^8+\sigma^{12}}},\\
\N_{q^{15}/q}(a)=\N_{q^{15}/q}(b)=-1.
\end{cases}
\end{equation}
By the third and the fourth equations we have 
\[ \N_{q^{15}/q^3}(b)=\frac{-\N_{q^{15}/q^3}(b)^5}{\N_{q^{15}/q^3}(a)^{\sigma^2+\sigma+1}}=\N_{q^{15}/q^3}(b)^5, \]
that is $\N_{q^{15}/q^3}(b)^2=\pm 1$, so that by the second equation of System \eqref{eq:sist44} we have 
\[\N_{q^{15}/q^3}(b)^{\sigma+1}+\N_{q^{15}/q^3}(b)^{\sigma^2+1}+ \N_{q^{15}/q^3}(b)^{\sigma^2+\sigma}=-1.\]
So, $\N_{q^{15}/q^3}(b)$ is a solution of the following system
\[ \left\{ \begin{array}{ll}x^3+x^2-x+1=0,\\ x^2=\pm1, \end{array}\right. \]
from which we get $\N_{q^{15}/q^3}(b)=0$, a contradiction.
\end{proof}

\section{Applications}\label{sec:applications}

\subsection{Rank metric codes}

Rank metric codes were introduced by Delsarte \cite{Delsarte} in 1978 and they have been intensively investigated in recent years because of their applications; we refer to \cite{sheekey_newest_preprint} for a recent survey on this topic.
The set of $m \times n$ matrices $\F_q^{m\times n}$ over $\F_q$ may be endowed with a metric, called \emph{rank metric}, defined by
\[d(A,B) = \mathrm{rk}\,(A-B).\]
A subset $\C \subseteq \F_q^{m\times n}$ equipped with the rank metric is called a \emph{rank metric code} (shortly, a \emph{RM}-code).
The minimum distance of $\C$ is defined as
\[d = \min\{ d(A,B) \colon A,B \in \C,\,\, A\neq B \}.\]
Denote the parameters of a RM-code $\C\subseteq\F_q^{m,n}$ with minimum distance $d$ by $(m,n,q;d)$.
We are interested in $\fq$-\emph{linear} RM-codes, i.e. $\F_q$-subspaces of $\fq^{m\times n}$.
Delsarte showed in \cite{Delsarte} that the parameters of these codes must obey a Singleton-like bound, i.e.
\[ |\C| \leq q^{\max\{m,n\}(\min\{m,n\}-d+1)}. \]
When equality holds, we call $\C$ a \emph{maximum rank distance} (\emph{MRD} for short) code.
Examples of $\F_q$-linear MRD-codes were first found in \cite{Delsarte,Gabidulin}.
We say that two $\F_q$-linear RM-codes $\C$ and $\C'$ are equivalent if there exist $X \in \mathrm{GL}(m,q)$, $Y \in \mathrm{GL}(n,q)$, and $\sigma\in{\rm Aut}(\fq)$ such that
\[\C'=\{XC^\sigma Y \colon C \in \C\}.\]

The \emph{left} and \emph{right} idealisers of $\C$ are defined in \cite{LN2016} as $L(\C)=\{A \in \mathrm{GL}(m,q) \colon A \C\subseteq \C\}$ and $R(\C)=\{B \in \mathrm{GL}(n,q) \colon \C B \subseteq \C\}$. They are invariant under the equivalence of rank metric codes, and have been investigated in \cite{LTZ2}; further invariants have been introduced in \cite{GZ,NPH2}.

When $m=n$, an $\F_q$-linear rank metric code of $\F_q^{n\times n}$ can be seen an $\F_q$-subspace of $\mathcal{L}_{n,q}$ (or $\mathcal{L}_{n,q,\sigma}$) where $d(f(x),g(x))=\mathrm{rk}(f(x)-g(x))=n-\dim_{\F_q}(\ker(f(x)-g(x)))$ for any $f(x),g(x) \in \mathcal{L}_{n,q}$.
In \cite{CsMPZh}, the authors study those rank metric codes of $\mathcal{L}_{n,q}$ with both the idealisers isomorphic to $\F_{q^n}$.
In particular, in \cite[Theorem 2.2]{CsMPZh} they proved that any $\F_q$-linear rank metric code of $\mathcal{L}_{n,q}$ with both the idealisers isomorphic to $\F_{q^n}$ is equivalent to the $\F_{q^n}$-span of some linearized monomials. Then they classify these codes when $n\leq 9$, \cite[Theorem 1.1]{CsMPZh}, introducing a new class of MRD-codes
\[ \C_{3,n}=\langle x,x^q,x^{q^3} \rangle_{\F_{q^n}}, \]
when $n=7$ and $q$ is odd, and $n=8$ and $q\equiv 1 \pmod{3}$.
For $n=7$ the property of $\C_{3,n}$ of being MRD or not may be obtained as a consequence of (c) of Theorem \ref{McGuireMullers=1}, whereas the case $n=8$ may be obtained by using Proposition \ref{prop:d=3}.
Moreover we can determine the number of minimum weight codewords of 
\[\C_{3,n,\sigma}=\langle x,x^\sigma,x^{\sigma^3} \rangle_{\F_{q^n}} \] 
where $\sigma$ is a generator of $\mathrm{Gal}(\F_{q^n}\colon \F_q)$, when $\C_{3,n,\sigma}$ is not an MRD-code and $n \in \{7,8\}$.
Note that the minimum distance of $\C_{3,n,\sigma}$ is greater than or equal to $n-3$ and the number of minimum weight codewords of an MRD-code (that is those of weight $n-2$) depends only on its parameters, see \cite{Ravagnani}.

\begin{corollary}\label{cor:C3n}
Let $q$ be a prime power, $n$ be a positive integer and let $\sigma$ is a generator of $\mathrm{Gal}(\F_{q^n}\colon \F_q)$.
Denote by $D$ the number of codewords of weight $n-3$ in $\C_{3,n,\sigma}$.
\begin{itemize}
    \item  If $n\in \{4,5\}$ then $D=0$, and if $n =6$ then $D=(q^6-1)(q^3+1)$.
    \item If $n=7$, then $D$ is either $0$ when $q$ is odd, or $\frac{(q^7-1)^2}{q-1}$ when $q$ is even. In particular $\C_{3,7,\sigma}$ is an MRD-code if and only if $q$ is odd.
    \item If $n=8$, then $D$ is either $0$ when $q\equiv 1 \pmod{3}$, or $\frac{(q^8-1)^2}{q-1}$ when $q\equiv 0 \pmod{3}$, or $2\frac{(q^8-1)^2}{q-1}$ when $q\equiv 2 \pmod{3}$. In particular $\C_{3,8,\sigma}$ is an MRD-code if and only if $q\equiv 1 \pmod{3}$.
\end{itemize}
\end{corollary}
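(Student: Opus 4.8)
The plan is to reduce the enumeration of weight-$(n-3)$ codewords to counting the pairs $(a,b)$ for which the normalized trinomial $ax+bx^{\sigma}-x^{\sigma^3}$ has maximum kernel, and then to read these counts directly off Proposition \ref{prop:d=3}.

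First I would observe that a codeword of $\C_{3,n,\sigma}$ is uniquely written as $f(x)=c_0 x+c_1 x^{\sigma}+c_3 x^{\sigma^3}$ with $c_0,c_1,c_3\in\F_{q^n}$ (the maps $x,x^{\sigma},x^{\sigma^3}$ are $\F_{q^n}$-linearly independent since $0,1,3$ are distinct modulo $n$ for $n\geq 4$ and $\gcd(s,n)=1$), and that its weight equals $n-\dim_{\F_q}(\ker f)$. As the $\sigma$-degree of $f$ is at most $3$, Theorem \ref{Gow} gives $\dim_{\F_q}(\ker f)\leq 3$, so $f$ has weight $n-3$ exactly when $\dim_{\F_q}(\ker f)=3$. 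This forces $c_3\neq 0$, for otherwise the $\sigma$-degree is at most $1$ and the weight is at least $n-1$. Since multiplying $f$ by a nonzero scalar $\lambda\in\F_{q^n}^*$ leaves the kernel unchanged, $f$ has maximum kernel if and only if $-c_3^{-1}f(x)=(-c_3^{-1}c_0)x+(-c_3^{-1}c_1)x^{\sigma}-x^{\sigma^3}$ does. Setting $a=-c_3^{-1}c_0$ and $b=-c_3^{-1}c_1$, the assignment $f\mapsto(c_3;a,b)$ is a bijection between the weight-$(n-3)$ codewords and $\F_{q^n}^*\times \{(a,b)\in\F_{q^n}^2:\ ax+bx^{\sigma}-x^{\sigma^3}\ \text{has maximum kernel}\}$. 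Hence $D=(q^n-1)\,N$, where $N$ denotes the size of the latter set, and it remains to compute $N$ in each case.

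Next I would evaluate $N$ via Proposition \ref{prop:d=3}, using throughout that the norm map $\F_{q^n}^*\to\F_{q^m}^*$ is surjective with kernel of size $(q^n-1)/(q^m-1)$. For $n\in\{4,5\}$ the first bullet gives $N=0$, so $D=0$. For $n=6$ the second bullet forces $b=0$ and $\N_{q^6/q^3}(a)=1$, whence $N=q^3+1$ and $D=(q^6-1)(q^3+1)$. For $n=7$ the third bullet yields $N=0$ when $q$ is odd, while for $q$ even the value $b$ is determined by $a$ and $a$ ranges over the $(q^7-1)/(q-1)$ norm-one elements, giving $D=(q^7-1)^2/(q-1)$. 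For $n=8$ the fourth bullet gives $N=0$ when $q\equiv 1\pmod 3$, and otherwise $b=-\alpha\, a^{-(\sigma^6+\sigma^3)}$ with $\alpha$ a root of $x^2+x+1$ and $a$ norm-one, so $N$ equals the number of admissible roots $\alpha$ times $(q^8-1)/(q-1)$.

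The point needing care—and the main obstacle—is the multiplicity of $\alpha$ for $n=8$: when $q\equiv 0\pmod 3$ one has $x^2+x+1=(x-1)^2$, so there is a single (repeated) root, giving $N=(q^8-1)/(q-1)$ and $D=(q^8-1)^2/(q-1)$; when $q\equiv 2\pmod 3$ the polynomial has two distinct roots (both lying in $\F_{q^2}\subseteq\F_{q^8}$), producing two distinct values of $b$ for each $a$, whence $N=2(q^8-1)/(q-1)$ and $D=2(q^8-1)^2/(q-1)$. Finally, for the MRD assertions I would invoke the Singleton-like bound: since $|\C_{3,n,\sigma}|=q^{3n}$ its minimum distance satisfies $d\leq n-2$, while Theorem \ref{Gow} gives $d\geq n-3$; thus $\C_{3,n,\sigma}$ is MRD precisely when $d=n-2$, that is, precisely when it has no codeword of weight $n-3$, i.e. when $D=0$. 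Comparing with the computed values of $D$ gives the stated characterizations ($q$ odd for $n=7$, and $q\equiv 1\pmod 3$ for $n=8$); everything apart from the characteristic-$3$ degeneracy of $x^2+x+1$ is routine norm-counting.
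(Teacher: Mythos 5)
Your proposal is correct and takes essentially the same route as the paper: normalize each weight-$(n-3)$ codeword to the form $ax+bx^{\sigma}-x^{\sigma^3}$ (with a nonzero scalar $\eta$ in front), read off the admissible pairs $(a,b)$ from Proposition \ref{prop:d=3}, and count via the fibres of the norm map together with the root multiplicity of $x^2+x+1$ modulo the characteristic. The only difference is one of detail, not of method: you make explicit the bijection $f\mapsto(c_3;a,b)$ and the Singleton-bound derivation of the MRD characterizations, both of which the paper leaves implicit.
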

\begin{proof}
The number of codewords in $\C_{3,n,\sigma}$ having weight $n-3$ coincides with the number of polynomials $\C_{3,n,\sigma}$ with kernel of dimension $3$.
So, by Theorem \ref{th:mainMcG} (see also Theorem \ref{McGuireMullers=1}) the first point follows.
Let $n=7$. By Proposition \ref{prop:d=3}, the polynomials in $\C_{3,7,\sigma}$ with kernel of dimension $3$ exists only when $q$ is even and are of the following form
\[ \eta (x^{\sigma^3}-bx^\sigma-ax), \]
with $\N_{q^7/q}(a)=1$, $b=-a^{\sigma(1+\sigma^3+\sigma^6)}$ and $\eta \in \F_{q^7}^*$.
Similarly, for the case $n=8$ by Proposition \ref{prop:d=3} the polynomials having kernel of dimension $3$ in $\C_{3,8,\sigma}$ exists only if $q \equiv 0,2 \pmod{3}$ and have the following shape
\[ \eta (x^{\sigma^3}-bx^\sigma-ax), \]
with $\N_{q^8/q}(a)=1$, $b=-\frac{\alpha}{a^{\sigma^3+\sigma^6}}$, where $\alpha$ is a root of $x^2+x+1$ and $\eta \in \F_{q^8}^*$.
The assertion then follows by noting that the equation $x^2+x+1=0$ has one root if $q \equiv 0 \pmod{3}$ and it has two distinct roots if $q \equiv 1 \pmod{3}$.
\end{proof}

Now consider $\sigma$ a generator of $\mathrm{Gal}(\F_{q^n}/\F_q)$, $d$ a positive integer with $d\leq n$ and let
\[ \C_{d,n,\sigma}=\langle x,x^\sigma,x^{\sigma^d}\rangle_{\F_{q^n}}. \]
The minimum distance of $\C_{d,n,\sigma}$ is greater than or equal to $n-d$.
Denote by $D$ the number of codewords of weight $n-d$ in $\C_{d,n,\sigma}$.

Similarly to $\C_{3,n}$, as a corollary of Proposition \ref{prop:d=4} we determine the number of codewords in $\C_{4,n,\sigma}$ with weight $n-4$.

\begin{corollary}
Let $q$ be a prime power, $n$ be a positive integer, $\sigma$ be a generator of $\mathrm{Gal}(\F_{q^n}/\F_q)$ and let
\[ \C_{4,n,\sigma}=\langle x,x^\sigma,x^{\sigma^4}\rangle_{\F_{q^n}}. \]
\begin{itemize}
    \item If $5\leq n\leq 11 $ and $n\ne 8$ then $D=0$, and if $n\in\{8,12\}$ then $D=\frac{(q^n-1)^2}{q^4-1}$.
    \item If $n=13$ then 
    \[ D= \left\{ \begin{array}{ll} 0, & \text{if}\,\,q \not \equiv 0 \pmod{3},\\
    \frac{(q^n-1)^2}{q-1}, & \text{otherwise}.
    \end{array} \right. \]
    \item If $n=15$ then 
    \[ D= \left\{ \begin{array}{ll} 0, & \text{if}\,\,q\,\,\text{is odd},\\
    \frac{(q^n-1)^2}{q-1}, & \text{otherwise}.
    \end{array} \right. \]
\end{itemize}
\end{corollary}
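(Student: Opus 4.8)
The plan is to mirror the proof of Corollary~\ref{cor:C3n} and reduce the counting to the classification of maximum-kernel trinomials given in Proposition~\ref{prop:d=4}. First I would note that a codeword of $\C_{4,n,\sigma}$ has weight $n-4$ exactly when its kernel has dimension $4$; by Theorem~\ref{Gow} this forces the $\sigma$-degree to be $4$, so the coefficient of $x^{\sigma^4}$ is nonzero and every such codeword can be written as $\eta\,(x^{\sigma^4}-bx^\sigma-ax)$ for a unique $\eta\in\F_{q^n}^*$ and a pair $(a,b)$ for which $L(x)=ax+bx^\sigma-x^{\sigma^4}$ has maximum kernel. Since scaling by $\eta$ does not affect the kernel, counting the weight-$(n-4)$ codewords is the same as counting triples $(\eta,a,b)$ with $\eta\in\F_{q^n}^*$ and $(a,b)$ admissible in the sense of Proposition~\ref{prop:d=4}. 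The map sending such a triple to the corresponding polynomial is injective---the coefficient of $x^{\sigma^4}$ returns $\eta$, after which the remaining two coefficients return $a$ and $b$---so $D$ is simply $(q^n-1)$ times the number of admissible pairs $(a,b)$.

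The remaining work is a case-by-case evaluation of that number, reading off the relevant item of Proposition~\ref{prop:d=4}. For $5\le n\le 11$ with $n\ne 8$ there are no admissible pairs, whence $D=0$. For $n\in\{8,12\}$ the admissible pairs are those with $b=0$ and $\N_{q^n/q^4}(a)=1$; since $\N_{q^n/q^4}\colon\F_{q^n}^*\to\F_{q^4}^*$ is a surjective homomorphism, its kernel has order $(q^n-1)/(q^4-1)$, giving this many admissible $a$ and hence $D=(q^n-1)^2/(q^4-1)$. For $n=13$ and $n=15$ the coefficient $b$ is determined by $a$, so the number of admissible pairs equals the number of admissible $a$: this is $0$ precisely when the arithmetic condition on $q$ fails (no solution exists), and otherwise equals the size of the norm fiber $\{a:\N_{q^n/q}(a)=\pm1\}$, namely $(q^n-1)/(q-1)$, yielding $D=(q^n-1)^2/(q-1)$.

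The argument has no serious obstacle, since the structural input is entirely supplied by Proposition~\ref{prop:d=4}; the care needed is only in the bookkeeping. Concretely I must invoke that $\N_{q^n/q}$ is a surjective homomorphism onto $\F_q^*$ with kernel of order $(q^n-1)/(q-1)$, so that each of the values $-1$ (for $n=13$) and $1$ (for $n=15$) is attained by exactly that many $a$, and I must record the existence dichotomy from Proposition~\ref{prop:d=4}---a maximum-kernel trinomial exists for $n=13$ only when $q$ is a power of $3$ and for $n=15$ only when $q$ is even---which produces the two-branch formulas. Verifying the injectivity of the parametrization and that $b$ is genuinely a function of $a$ in the last two cases then completes the count.
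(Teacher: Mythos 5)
Your argument is correct and is exactly the paper's (implicit) proof: the paper gives no separate argument for this corollary but states that it follows from Proposition~\ref{prop:d=4} "arguing as in Corollary~\ref{cor:C3n}", which is precisely the parametrization $\eta(x^{\sigma^4}-bx^{\sigma}-ax)$ with $\eta\in\F_{q^n}^*$ and $(a,b)$ admissible that you use, followed by the same norm-fiber counts.
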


Using Theorems \ref{th:mainMcG}, \ref{th:main} and \ref{zullopaultheo} and arguing as in Corollary \ref{cor:C3n} we obtain results on $D$ for $\C_{d,n,\sigma}$.

\begin{corollary}
Let $q$ be a prime power, $n$ be a positive integer, $\sigma$ be a generator of $\mathrm{Gal}(\F_{q^n}/\F_q)$.
\begin{itemize}
    \item If $n\leq d(d-1)$ and $d\nmid n$, then $D=0$.
    \item If $n\leq d(d-1)$ and $d\mid n$, then $D=\frac{(q^n-1)^2}{q^d-1}$.
    \item If $n=d(d-1)+1$ then
    \[ D= \left\{ \begin{array}{ll} 0, & \text{if}\,\,d-1\,\,\text{is not a power of the characteristic of}\,\,\F_q,\\
    \frac{(q^n-1)^2}{q-1}, & \text{otherwise}.
    \end{array} \right. \]
    \item If $n=d^2-1$, $d$ is a power of $2$, $q$ is even, then $D\geq \frac{(q^n-1)^2}{q-1}$.
\end{itemize}
\end{corollary}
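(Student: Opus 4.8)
The plan is to translate the statement about minimum-weight codewords into the counting of pairs $(a,b)$ with maximum kernel, which is exactly what the earlier theorems control, and then to read off $D$ by multiplying by the number of admissible leading scalars. First I would recall that a generic codeword of $\C_{d,n,\sigma}$ has the form $w=c_0x+c_1x^\sigma+c_2x^{\sigma^d}$ with $c_0,c_1,c_2\in\F_{q^n}$; being a $\sigma$-polynomial of $\sigma$-degree at most $d$, Theorem \ref{Gow} gives $\dim_{\F_q}(\ker w)\le d$, so its weight $n-\dim_{\F_q}(\ker w)$ is at least $n-d$, with equality precisely when $\dim_{\F_q}(\ker w)=d$. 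Since $d\ge 3$, such a codeword must have $c_2\ne 0$: otherwise $w$ has $\sigma$-degree at most $1$ and hence kernel of dimension at most $1$.

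Next I would normalise. For $c_2\ne 0$ the kernel is unchanged under multiplication by $-c_2^{-1}$, so $\ker w=\ker(ax+bx^\sigma-x^{\sigma^d})$ with $a=-c_0c_2^{-1}$ and $b=-c_1c_2^{-1}$. Because $x,x^\sigma,x^{\sigma^d}$ are $\F_{q^n}$-linearly independent (the exponents $0,1,d$ are distinct modulo $n$ in every regime considered), the assignment $w\mapsto(a,b,c_2)$ is a bijection between the codewords with $c_2\ne 0$ and the triples in $\F_{q^n}^2\times\F_{q^n}^*$, and $w$ has weight $n-d$ precisely when $L_{a,b}(x)=ax+bx^\sigma-x^{\sigma^d}$ has maximum kernel. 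This yields the key factorisation
\[ D=(q^n-1)\cdot N,\qquad N=\bigl|\{(a,b)\in\F_{q^n}^2:\dim_{\F_q}(\ker L_{a,b})=d\}\bigr|, \]
reducing everything to the evaluation of $N$.

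I would then dispatch the first three bullets by invoking Theorem \ref{th:mainMcG} and counting norm fibres. If $n\le d(d-1)$ and $d\nmid n$, part (a) gives $N=0$, hence $D=0$. If $d\mid n$ and $n\le d(d-1)$, part (b) says $\dim_{\F_q}(\ker L_{a,b})=d$ if and only if $b=0$ and $\N_{q^n/q^d}(a)=1$; since $\N_{q^n/q^d}$ is surjective onto $\F_{q^d}^*$ with kernel of size $(q^n-1)/(q^d-1)$, we get $N=(q^n-1)/(q^d-1)$ and thus $D=(q^n-1)^2/(q^d-1)$. If $n=d(d-1)+1$, part (c) shows that when $d-1$ is not a power of $p$ no pair works, so $D=0$, while if $d-1$ is a power of $p$ each $a$ with $\N_{q^n/q}(a)=(-1)^{d-1}$ determines $b$ uniquely, giving $N=(q^n-1)/(q-1)$ and $D=(q^n-1)^2/(q-1)$.

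The last bullet is the only asymmetric one, and this is where I expect the sole subtlety: Theorem \ref{zullopaultheo} supplies only a \emph{sufficient} condition for maximum kernel when $n=d^2-1$, $d$ a power of $2$ and $q$ even, not a full characterisation. Hence I can only produce the announced lower bound: for each of the $(q^n-1)/(q-1)$ elements $a$ with $\N_{q^n/q}(a)=1$, the prescribed value $b=a^{-\sigma^d\frac{\sigma^{d(d-1)}-1}{\sigma^d-1}}$ yields a pair with maximum kernel, so $N\ge(q^n-1)/(q-1)$ and therefore $D\ge(q^n-1)^2/(q-1)$. The main obstacle is thus not any hard computation but rather this gap: without a converse to Theorem \ref{zullopaultheo} one cannot upgrade $\ge$ to equality, which is precisely why the statement is phrased as an inequality in that case.
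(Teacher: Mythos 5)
Your proposal is correct and follows essentially the same route as the paper: the authors likewise normalise a minimum-weight codeword to the form $\eta(x^{\sigma^d}-bx^\sigma-ax)$ with $\eta\in\F_{q^n}^*$, count the admissible pairs $(a,b)$ via Theorems \ref{th:mainMcG} and \ref{zullopaultheo} by the size of the relevant norm fibres, and multiply by $q^n-1$, exactly as in the proof of Corollary \ref{cor:C3n}. You also correctly pinpoint why the fourth case is only a lower bound, namely that Theorem \ref{zullopaultheo} is merely sufficient.
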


\subsection{Quasi-subfield polynomials}

A \emph{quasi-subfield polynomial} is defined as a polynomial of the form $x^{q^d}-\lambda(x) \in \F_{q^n}[x]$ which divides $x^{q^n}-x$ and for which $\log_q(\deg(\lambda(x)))<d^2/n$.
Those polynomials were recently introduced in \cite{HKP} in order to solve the Elliptic Curve Discrete Logarithm Problem (ECDLP). 

As for the family of subspace polynomials introduced in \cite{McGuireMuller}, we show that the polynomials in Theorem \ref{zullopaultheo} with $\sigma\colon x \in \mathbb{F}_{q^n}\mapsto x^q \in \F_{q^n}$ are quasi-subfield polynomials.
 
\begin{proposition} \label{quasisubfield}
 Let $L(x)=x^{q^d}-bx^{q}-ax \in \mathcal{L}_{n,q}$, with $n=(d-1)d+d-1$. If
\begin{itemize}
\item $q$ is a power of 2,
\item $d$ is a power of 2,
    \item $\N_{q^n/q}(a)=1$,
    \item $b=a^{-q^d\frac{q^{d(d-1)}-1}{q^d-1}}$,
\end{itemize} 
then $L(x)$ is a quasi-subfield polynomial.
\end{proposition}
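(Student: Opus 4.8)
The plan is to recognize that the hypotheses of this proposition are precisely those of Theorem \ref{zullopaultheo} specialized to the Frobenius automorphism $\sigma\colon x\mapsto x^q$ (that is, the case $s=1$, for which $\gcd(s,n)=1$ holds trivially). The statement then decomposes into the two tasks dictated by the definition of a quasi-subfield polynomial: first, that $L(x)$ divides $x^{q^n}-x$, and second, that $\log_q(\deg(\lambda(x)))<d^2/n$, where $\lambda(x)=bx^q+ax$.

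For the divisibility I would invoke Theorem \ref{zullopaultheo} directly: since $q$ and $d$ are powers of $2$, $\N_{q^n/q}(a)=1$, and $b=a^{-q^d(q^{d(d-1)}-1)/(q^d-1)}$, it yields $\dim_{\F_q}(\ker(L))=d$. Now $L(x)$ is an $\F_q$-linearized polynomial of $q$-degree $d$, hence has degree $q^d$ as an ordinary polynomial, while its kernel is an $\F_q$-subspace of $\F_{q^n}$ of dimension $d$, supplying $q^d$ distinct roots all lying in $\F_{q^n}$. A linearized polynomial of degree $q^d$ with $q^d$ distinct roots is separable and splits completely over $\F_{q^n}$, so it equals $\prod_{\alpha\in\ker(L)}(x-\alpha)$; each linear factor $(x-\alpha)$ divides $x^{q^n}-x$ because $\alpha\in\F_{q^n}$, and since the roots are distinct and $x^{q^n}-x$ is itself separable, one concludes $L(x)\mid x^{q^n}-x$.

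For the degree bound I would first note that $a\neq 0$ (forced by $\N_{q^n/q}(a)=1$), whence $b\neq 0$ as well, so that $\deg(\lambda(x))=q$ exactly and $\log_q(\deg(\lambda(x)))=1$. Since $n=(d-1)d+d-1=d^2-1$, the required inequality becomes $1<d^2/(d^2-1)$, which holds because $d^2/(d^2-1)=1+1/(d^2-1)>1$. Combining the two parts establishes that $L(x)$ is a quasi-subfield polynomial.

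I do not expect a genuine obstacle, since Theorem \ref{zullopaultheo} carries all the arithmetic weight. The only points meriting care are confirming that $b\neq 0$, so that $\lambda(x)$ has degree \emph{exactly} $q$ rather than something smaller, and checking that the maximum-kernel property genuinely gives separability and complete splitting over $\F_{q^n}$, which is exactly what upgrades the dimension count $\dim_{\F_q}(\ker(L))=d$ into the divisibility $L(x)\mid x^{q^n}-x$.
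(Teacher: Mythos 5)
Your proposal is correct and follows essentially the same route as the paper: invoke Theorem \ref{zullopaultheo} to get $\dim_{\F_q}(\ker(L))=d$, conclude that $L$ splits completely over $\F_{q^n}$ and hence divides $x^{q^n}-x$, and verify $\log_q(\deg(\lambda(x)))=1<d^2/(d^2-1)$. The extra care you take with separability and with $b\neq 0$ is fine but not needed beyond what the paper already records.
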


\begin{proof}
Clearly, $\log_q(\deg(\lambda(x)))=1$ and $d^2>n=d^2-1$, so that the condition $\log_q(\deg(\lambda(x)))<d^2/n$ is satisfied. By Theorem \ref{zullopaultheo} $\dim_{\F_q}(\ker(L(x)))=d$, i.e. $L$ splits completely over $\F_{q^n}$ and so it divides $x^{q^n}-x$.
\end{proof}

In \cite{HKP}, the authors developed an algorithm to solve ECDLP over the field $\F_{q^n}$ using a quasi-subfield polynomial.
The complexity of the algorithm presented in \cite{HKP} in which $n=d^2-1$ and with the use of $L(x)=x^{q^d}-bx^q-ax\in \mathcal{L}_{d^2-1,q}$ as in Proposition \ref{quasisubfield}, is less than the complexity of a brute force algorithm, but greater than the generic algorithms (Pollard Rho or Baby-Step-Giant-Step).

See also \cite{Euler,EulerP}.

\subsection{Cyclic subspace codes}

Let $k$ be a non-negative integer with $k \leq n$, the set of all $k$-dimensional $\F_q$-subspaces of $\F_{q^n}$, viewed as $\F_{q}$-vector space, forms a \emph{Grassmanian space} over $\F_q$, which is denoted by $\mathcal{G}_{q}(n,k)$. A \emph{constant dimension subspace code} is a subset $C$ of $\mathcal{G}_{q}(n,k)$ endowed with the metric defined as follows \[d(U,V)=\dim_{\F_q}(U)+\dim_{\F_q}(V)-2\dim_{\F_q}(U \cap V),\]
where $U,V \in C$.
The interest in subspace codes has recently increased because of their application to error correction in random
network coding, see \cite{KoetterK}. 
One of the most studied family of subspace codes was introduced in \cite{Etzion} and is knows as \emph{cyclic subspace codes}.
Let $V \in \mathcal{G}_{q}(n,k)$ and $\alpha \in \F_{q^n}^*=\F_{q^n} \setminus \{0\}$, the \emph{cyclic shift} of $V$ by $\alpha$ is defined as $\alpha V=\{\alpha v:v \in V\}$. Note that a cyclic shift of $V$ is an $\F_{q}$-subspace of $\F_{q^n}$ of the same dimension of $V$. So, a subspace code $C \subseteq \mathcal{G}_q(n,k)$ is said to be \emph{cyclic} if for every $\alpha \in \F_{q^n}^*$ and every $V \in C$ then $\alpha V \in C$. 
Let $V \in \mathcal{G}_q(n,k)$, the \emph{orbit} of $V$ is the set $\{\alpha V: \alpha \in \F_{q^n}^*\}$, and its cardinality is $(q^n-1)/(q^t-1)$, for some $t$ which divides $n$, see e.g.\ \cite[Theorem 1]{Otal}. In particular, every orbit of a subspace $V \in \mathcal{G}_q(n,k)$ defines a cyclic subspace code of size $(q^n-1)/(q^t-1)$, for some $t \mid n $. Assume $k > 1$. 
It is easy to see that a cyclic subspace code generated by an orbit of a subspace $V$ with size $(q^n-1)/(q-1)$ (that is the maximum possible) has minimum distance at most $2k-2$. In \cite{Trautmann}  the following conjecture arises.

\begin{conjecture} \label{conj}
For every positive integers $n,k$ such that $k\leq n/2$, there exists a cyclic code of size $\frac{q^n-1}{q-1}$ in $\mathcal{G}_q(n,k)$ and minimum distance $2k-2$. 
\end{conjecture}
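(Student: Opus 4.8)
The plan is to prove the full conjecture by exhibiting, for every pair $(n,k)$ with $k\le n/2$, a single $k$-dimensional $\F_q$-subspace $V\subseteq\F_{q^n}$ whose orbit $\{\alpha V:\alpha\in\F_{q^n}^*\}$ realises the code. First I would reduce the two required properties to intrinsic conditions on $V$. The orbit has maximal size $(q^n-1)/(q-1)$ exactly when the stabiliser $\{\alpha\in\F_{q^n}^*:\alpha V=V\}$ equals $\F_q^*$: indeed $S\cup\{0\}$ is always a subfield $\F_{q^t}\subseteq\F_{q^n}$ with $t\mid k$ (since $V$ becomes an $\F_{q^t}$-space), giving orbit size $(q^n-1)/(q^t-1)$, so we must force $t=1$. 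The minimum distance equals $2k-2$ as soon as it is at least $2k-2$, which by $d(\alpha V,\beta V)=2k-2\dim_{\F_q}(V\cap(\beta\alpha^{-1})V)$ amounts to $\dim_{\F_q}(V\cap\gamma V)\le 1$ for every $\gamma\in\F_{q^n}^*\setminus\F_q^*$. Thus the conjecture is equivalent to the existence, for all $k\le n/2$, of a $k$-dimensional \emph{Sidon space} in $\F_{q^n}$, that is a $V$ for which $ab=cd$ with $a,b,c,d\in V\setminus\{0\}$ forces $\{a\F_q^*,b\F_q^*\}=\{c\F_q^*,d\F_q^*\}$; the Sidon property is precisely what simultaneously yields $\dim_{\F_q}(V\cap\gamma V)\le 1$ and the trivial stabiliser.

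Next I would construct such spaces, beginning with the base case $n=2k$. Here I would use the explicit linearised model $V=\{u+\gamma\,u^{q}:u\in\F_{q^k}\}$, where $\gamma\in\F_{q^{2k}}\setminus\F_{q^k}$ has degree two over $\F_{q^k}$; the map $u\mapsto u+\gamma u^{q}$ is $\F_q$-linear and injective, since $u=-\gamma u^{q}$ with $u\ne0$ would place $\gamma$ in $\F_{q^k}$, so $\dim_{\F_q}V=k$. Writing a product $ab$ of two elements of $V$ in the basis $\{1,\gamma\}$ over $\F_{q^k}$ and reducing $\gamma^2$ through the minimal polynomial of $\gamma$, the equation $ab=cd$ collapses to a pair of equations over $\F_{q^k}$; the Sidon property then follows from a verifiable condition on the coefficients of that minimal polynomial (choosing $\gamma$ so that the associated quadratic has no root of the forbidden shape). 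This settles every pair with $n=2k$.

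To reach arbitrary $n$ I would argue in two regimes. When $2k\mid n$, a Sidon space $V\subseteq\F_{q^{2k}}$ remains Sidon inside the larger field $\F_{q^n}$, because the defining relations $ab=cd$ and the scalar field $\F_q$ are untouched by the ambient enlargement, and its stabiliser in $\F_{q^n}^*$ stays $\F_q^*$; this at once yields the code in $\F_{q^n}$. For the remaining $n$ with $2k<n$ and $2k\nmid n$, where no such subfield is present, I would construct a Sidon space directly in $\F_{q^n}$ from the analogous template $V=\{u+\gamma\,u^{q^s}:u\in W\}$ with $W$ a suitable $k$-dimensional $\F_q$-subspace, $\gamma$ of degree two over an appropriate intermediate field, and $\gcd(s,n)=1$; the maximum-kernel subspace polynomials of Theorems \ref{th:main} and \ref{zullopaultheo} furnish explicit such $V$ for the special values of $n$ they realise, which serve both as checkpoints and as seeds for the generic construction.

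The main obstacle is \emph{uniformity}: assembling a single family of Sidon spaces that works for all admissible $(n,k)$ at once, rather than for the thin band of $n$ that the degree-$k$ trinomials alone provide. Two points are delicate. First, the reduction of $ab=cd$ to an equation over $\F_{q^k}$ (or $W$) must avoid spurious solutions for small $q$, where the non-norm and no-forbidden-root choices that fix $\gamma$ can degenerate; handling $q=2,3$ uniformly is the most technical part. Second, one must certify the trivial stabiliser in every case, that is rule out that $V$ is a module over a proper intermediate field $\F_{q^t}$ with $t>1$; this is automatic from the Sidon property but has to be re-established for each direct construction. Once these verifications are completed across all residues of $n$, the equivalence of the first paragraph turns each constructed $V$ into a cyclic code of size $(q^n-1)/(q-1)$ and minimum distance $2k-2$, proving the conjecture.
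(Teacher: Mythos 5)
Your proposal does not prove Conjecture \ref{conj}, and indeed nothing in the paper does: the statement is an open conjecture quoted from \cite{Trautmann}, and the paper's contribution (Proposition \ref{prop:costrcycl} and its companions) is only to add new parameter families --- $n=k(k-1)+1$ with $k-1$ a power of the characteristic, $n=k^2-1$ with $q$ and $k$ powers of $2$, and their multiples $N=nh$ --- to the partial solutions of \cite{BEGR,Otal,Roth}. So a purported complete proof must be judged against the fact that the general case was, at the time of writing, genuinely open.

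The first part of your argument is sound and is exactly the Sidon-space reduction of \cite{Roth}: full orbit size $\frac{q^n-1}{q-1}$ is equivalent to trivial stabiliser, distance at least $2k-2$ is equivalent to $\dim_{\F_q}(V\cap\gamma V)\le 1$ for all $\gamma\in\F_{q^n}^*\setminus\F_q^*$, and both together are equivalent to $V$ being a Sidon space; likewise your $n=2k$ construction $V=\{u+\gamma u^q : u\in\F_{q^k}\}$ and the lifting to $2k\mid n$ reproduce \cite{Roth}, where the admissible choice of $\gamma$ forces $q\ge 3$. The genuine gap is the entire remaining regime $2k<n$ with $2k\nmid n$ --- precisely the cases, such as $n$ prime, that \cite{Roth} leaves open and that motivate the paper's constructions. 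There you give no construction at all: saying that the maximum-kernel trinomials of Theorems \ref{th:main} and \ref{zullopaultheo} serve ``as seeds for the generic construction'' is a research programme, not an argument. Those trinomials yield suitable subspaces only when $n=k(k-1)+1$ with $k-1$ a $p$-power, or $n=k^2-1$ with $q,k$ powers of $2$ --- a thin set of values of $n$ for each $k$, nowhere near all residues --- and in the general template $V=\{u+\gamma u^{q^s}: u\in W\}$ you never specify $W$, $\gamma$, or $s$, never verify the Sidon property, and explicitly defer the $q=2,3$ degeneracies that you yourself identify as the hardest point. Since the conjecture was unresolved for exactly these parameters, the missing step is not a routine verification but the open problem itself; what can be salvaged from your outline is a correct account of the known framework plus the paper's actual contribution, namely explicit new $(q,n,k)$ with $n=p^{2\ell}+p^\ell+1$ prime, as in Proposition \ref{prop:costrcycl}.
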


Ben-Sasson et al.\ \cite{BEGR} used subspace polynomials to generate cyclic subspace codes with size $\frac{q^n-1}{q-1}$ and minimum distance $2k-2$. Precisely, they proved that Conjecture \ref{conj} holds true for any given $k$ and infinitely many values of $n$. Such result was then improved in \cite{Otal}. 
However, the relation between $k$ and $n$ was not immediately clear, as $n$ depends on the degree of the irreducible factors of $x^{q^k}+x^q+x$ in $\F_q$. Finally, Conjecture \ref{conj} was solved in \cite{Roth} for most of the cases, by using \emph{Sidon spaces}.

In this section we construct cyclic subspace codes via subspace polynomials with the same parameters of the codes of Conjecture \ref{conj}, in which the relation of $n$ and $k$ is given explicitly, covering some cases of Conjecture \ref{conj} not covered before. 

In order to do this, we need the concept of \emph{subspace polynomial} (see \cite[Definition 2]{BEGR}), that is a monic $q$-polynomial with maximum kernel.

Let $V \in \mathcal{G}_q(n,k)$, the polynomial $P_V(x)=\prod_{v \in V}(x-v)$ is the unique subspace polynomial whose set of roots is $V$. Clearly, if $P(x)$ is a subspace polynomial having $q$-degree $k$ then $P(x)=P_V(x)$, where $V$ is the kernel of $P$. 

\begin{definition}\cite[Definition 3]{BEGR}
Let $V \in \mathcal{G}_q(n,k)$ and $P_V(x)=x^{q^k}+\sum_{j=0}^i\alpha_jx^{q^j}\in \mathcal{L}_{n,q}$, such that $\alpha_i \neq 0$. We define the gap of $V$ as $gap(V)=k-i$.
\end{definition}

The gap of a subspace polynomial plays an important role for determining the distance between two subspace. Indeed, the following holds.

\begin{proposition}\cite[Corollary 2, Lemma 5 and Corollary 3]{BEGR}  \label{prop:gap}
\begin{enumerate}
    \item If $U,V \in \mathcal{G}_q(n,k)$, then $d(U,V) \geq 2\min(gap(U),gap(V))$.
    \item If $V \in \mathcal{G}_q(n,k)$ and $\alpha \in \F_{q^n}^*$, then $P_{\alpha V}(x)=\alpha^{q^k} P_V(\alpha^{-1} x)$. In particular, $V$ and $\alpha V$ have the same gap.
    \item Let $V \in \mathcal{G}_q(n,k)$ and $P_V(x)=x^{q^k}+\sum_{j=0}^i\alpha_jx^{q^j}$. If $\alpha_h \neq 0$ for some $h \in \{1,\ldots,i\}$ with $\gcd(h,n)=t$, then $V$ has at least $\frac{q^n-1}{q^t-1}$ distinct cyclic shifts.
\end{enumerate}
\end{proposition}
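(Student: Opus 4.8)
The plan is to prove the three parts in turn, using throughout that a subspace polynomial is the unique monic $q$-linearized polynomial whose set of roots is exactly the given subspace. For part (1), the relevant case is $U\neq V$ (the only one occurring in a minimum-distance computation), and I would study the difference $P_U(x)-P_V(x)$. First I would note that every $w\in U\cap V$ is a common root, so $U\cap V\subseteq\ker(P_U-P_V)$ and hence $\dim_{\F_q}(U\cap V)\le\dim_{\F_q}\ker(P_U-P_V)$. Next, since $P_U$ and $P_V$ are both monic of $q$-degree $k$ their leading terms cancel, and because $gap(U)$ and $gap(V)$ force the coefficients at the exponents $q^{k-1},\ldots,q^{k-\min(gap(U),gap(V))+1}$ to vanish in at least one of the two polynomials, the highest surviving term of $P_U-P_V$ has $q$-exponent at most $k-\min(gap(U),gap(V))$. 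As $U\neq V$ this difference is a nonzero $q$-polynomial, so it has at most $q^{k-\min(gap(U),gap(V))}$ roots; combining the two bounds gives $\dim_{\F_q}(U\cap V)\le k-\min(gap(U),gap(V))$, which I then substitute into $d(U,V)=2k-2\dim_{\F_q}(U\cap V)$ to obtain the claim.

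For part (2) I would argue by a direct computation. Since $P_{\alpha V}$ is the unique monic subspace polynomial vanishing on $\alpha V=\{\alpha v: v\in V\}$, I evaluate $\alpha^{q^k}P_V(\alpha^{-1}x)=\alpha^{q^k}\prod_{v\in V}(\alpha^{-1}x-v)$, extract a factor $\alpha^{-1}$ from each of the $q^k$ linear factors, and check that the powers of $\alpha$ cancel to leave $\prod_{v\in V}(x-\alpha v)=P_{\alpha V}(x)$, with monicity read off from the leading coefficient. Writing $P_V(x)=\sum_j\beta_j x^{q^j}$ with $\beta_k=1$, this identity expands to $P_{\alpha V}(x)=\sum_j\beta_j\alpha^{q^k-q^j}x^{q^j}$. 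Since $\alpha\neq 0$, the coefficient at $x^{q^j}$ is nonzero for $\alpha V$ exactly when it is for $V$, so the top nonzero exponent below $q^k$ is unchanged and $gap(\alpha V)=gap(V)$.

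For part (3) the number of distinct cyclic shifts of $V$ equals the orbit size $|\{\alpha V:\alpha\in\F_{q^n}^*\}|=(q^n-1)/|H|$, where $H=\{\alpha\in\F_{q^n}^*:\alpha V=V\}$ is the stabilizer. The first step is to show $H\cup\{0\}$ is a subfield $\F_{q^s}$: closure under multiplication and inverses is immediate, and closure under addition follows because for $\alpha,\beta\in H$ and $v\in V$ one has $(\alpha+\beta)v=\alpha v+\beta v\in V$, so $(\alpha+\beta)V\subseteq V$ and hence $(\alpha+\beta)V=V$ by a dimension count whenever $\alpha+\beta\neq 0$. Then $V$ is an $\F_{q^s}$-subspace, forcing $s\mid k$ and $s\mid n$, and the orbit size is $(q^n-1)/(q^s-1)$. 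The crux, which I expect to be the main obstacle, is to bound $s$ in terms of $t=\gcd(h,n)$. Using part (2), every $\alpha\in H=\F_{q^s}^*$ satisfies $\alpha^{q^k-q^h}=1$ because $\alpha_h\neq 0$; taking $\alpha$ a generator of $\F_{q^s}^*$ gives $(q^s-1)\mid(q^k-q^h)$. Reducing modulo $q^s-1$ and using $s\mid k$ (so $q^k\equiv 1$) turns this into $q^{h\bmod s}\equiv 1\pmod{q^s-1}$, which forces $h\equiv 0\pmod s$ since $0\le h\bmod s\le s-1$. Thus $s\mid h$ and $s\mid n$, so $s\mid t$, giving $q^s-1\mid q^t-1$ and hence $(q^n-1)/(q^s-1)\ge(q^n-1)/(q^t-1)$, as required. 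The delicate points to get right are the field-closure argument and the modular reduction that pins down the divisibility $s\mid h$ rather than a weaker relation involving $k-h$.
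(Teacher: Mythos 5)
The paper offers no proof of this proposition --- it is quoted verbatim from \cite[Corollary 2, Lemma 5 and Corollary 3]{BEGR} --- so there is no in-paper argument to compare against; your reconstruction is essentially the standard one from that reference and is correct. Part (1): the identity $d(U,V)=2k-2\dim_{\F_q}(U\cap V)$, the inclusion $U\cap V\subseteq\ker(P_U-P_V)$, and the degree bound on the nonzero difference $P_U-P_V$ (valid for $U\neq V$, which you rightly flag as the only case where the stated inequality can hold) fit together exactly as you say. One phrasing slip: the coefficients at $q^{k-1},\ldots,q^{k-\min(gap(U),gap(V))+1}$ vanish in \emph{both} polynomials (since each gap is at least the minimum), not merely ``in at least one''; vanishing in only one would not kill the corresponding term of the difference, so the justification as written does not support the conclusion even though the conclusion is right and the fix is immediate. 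Part (2) is a clean direct computation. Part (3) is the most substantial piece and you handle it correctly: the stabilizer $H\cup\{0\}$ is a subfield $\F_{q^s}$ with $s\mid k$ and $s\mid n$, the coefficient identity from part (2) applied to $\alpha_h\neq0$ gives $(q^s-1)\mid(q^k-q^h)$ for a generator $\alpha$ of $\F_{q^s}^*$, and the reduction modulo $q^s-1$ pins down $s\mid h$, hence $s\mid t$ and the claimed lower bound on the orbit size.
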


Now we are able to exhibit two families of cyclic subspace codes.

\begin{proposition} \label{prop:costrcycl}
Let $k$ be a positive integer such that $k-1$ is a power characteristic of $\F_{q}$. Let $a,b \in \mathbb{F}_{q^n}$, with $n=k(k-1)+1$,
such that $\mathrm{N}_{q^n/q}(a)=(-1)^{k-1}$ and $b=-a^{q f_1}$ where $f_1=\sum_{i=0}^{k-1}q^{ik}$. Let $V$ be the kernel of $P(x)$, with $P(x)=x^{q^k}-bx^q-ax$. Then $C=\{\alpha V: \alpha \in \F_{q^N}^*\}$ is a cyclic subspace code in $\mathcal{G}_q(N,k)$, with size $\frac{q^N-1}{q-1}$ and minimum distance $2k-2$, for every  $N=nh$ with $h \in \mathbb{N}$.
\end{proposition}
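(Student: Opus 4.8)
The plan is to assemble the statement from the kernel characterization of Theorem~\ref{th:mainMcG}(c) (equivalently Theorem~\ref{McGuireMullers=1}(c), which is the case $\sigma\colon x\mapsto x^q$) together with the gap machinery recorded in Proposition~\ref{prop:gap}. First I would observe that $P(x)=x^{q^k}-bx^q-ax$ and $ax+bx^q-x^{q^k}$ differ only by a sign and hence share the same root set. Since $n=k(k-1)+1$ and the hypotheses read exactly $\N_{q^n/q}(a)=(-1)^{k-1}$, $b=-a^{qf_1}$ with $f_1=\sum_{i=0}^{k-1}q^{ik}$, and $k-1$ a power of the characteristic, Theorem~\ref{McGuireMullers=1}(c) with $d=k$ yields $\dim_{\F_q}(\ker(P))=k$. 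Thus $V=\ker(P)$ is a $k$-dimensional $\F_q$-subspace of $\F_{q^n}$, and since $n\mid N$ we have $\F_{q^n}\subseteq\F_{q^N}$, so $V\in\mathcal{G}_q(N,k)$ for every $N=nh$, its dimension being unchanged under this inclusion.

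Next I would identify $P$ as the subspace polynomial of $V$. As $P$ is monic of $q$-degree $k$ and separable (its derivative is the nonzero constant $-a$), it has $q^k$ distinct roots, all lying in $\F_{q^n}\subseteq\F_{q^N}$, so $P=\prod_{v\in V}(x-v)=P_V$; this identity persists when $V$ is regarded inside $\F_{q^N}$, because the subspace polynomial is determined solely by the root set. The crucial point is that the coefficient of $x^q$ in $P_V$ equals $-b$, which is nonzero since $a\neq 0$ forces $b=-a^{qf_1}\neq 0$. Consequently the gap of $V$ is $k-1$, the lowest nonvanishing term below $x^{q^k}$ occurring in position $1$, and the index $h=1$ satisfies $\gcd(1,N)=1$.

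For the size I would apply Proposition~\ref{prop:gap}(3) over $\F_{q^N}$ with $h=1$ and $t=\gcd(1,N)=1$, obtaining at least $\frac{q^N-1}{q-1}$ distinct cyclic shifts of $V$. Since $V$ is an $\F_q$-subspace it is fixed by every $\lambda\in\F_q^*$, so the stabiliser of $V$ under $\alpha\mapsto\alpha V$ contains $\F_q^*$ and the orbit has size at most $\frac{q^N-1}{q-1}$; hence $|C|=\frac{q^N-1}{q-1}$. For the distance, Proposition~\ref{prop:gap}(2) shows that every $\alpha V$ has the same gap $k-1$ as $V$, so Proposition~\ref{prop:gap}(1) gives $d(U,W)\geq 2(k-1)$ for all distinct $U,W\in C$, whence the minimum distance is at least $2k-2$. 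Combining this with the elementary upper bound $2k-2$ valid for any orbit code of maximal size $\frac{q^N-1}{q-1}$ (noted just before Conjecture~\ref{conj}) yields minimum distance exactly $2k-2$.

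I expect the only genuine subtlety to be the transfer from $\F_{q^n}$ to $\F_{q^N}$: one must check that $V$ retains dimension $k$ as a subspace of the larger field, that its subspace polynomial is unchanged, and that Proposition~\ref{prop:gap}(3) may be invoked with $n$ replaced by $N$. All three follow at once from $V\subseteq\F_{q^n}\subseteq\F_{q^N}$ and from the fact that both dimension and subspace polynomial depend only on the underlying root set, so no real obstacle arises; the construction becomes essentially immediate once the gap of $V$ is pinned down to $k-1$ via $b\neq 0$.
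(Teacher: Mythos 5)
Your proposal is correct and follows essentially the same route as the paper's proof: invoke Theorem~\ref{McGuireMullers=1}(c) to get $\dim_{\F_q}(\ker(P))=k$ and hence $P=P_V$, then use parts (3), (1) and (2) of Proposition~\ref{prop:gap} for the size and the distance bound, and finally pass to the extension $\F_{q^N}$. You merely spell out details the paper leaves implicit (that $b\neq 0$ forces $\mathrm{gap}(V)=k-1$, the matching upper bound $2k-2$ from the remark before Conjecture~\ref{conj}, and the harmlessness of the base-change to $\F_{q^N}$), all of which are accurate.
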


\begin{proof}
Let $P(x)=x^{q^k}-bx^q-ax$, with $n,k,a$ and $b$ be as in the assumptions. By (c) of Theorem \ref{McGuireMullers=1}, the polynomial $P(x)$ splits completely over $\F_{q^n}$, so that $P(x)=P_V(x)$. By 3.\ of Proposition \ref{prop:gap}, the code $C=\{\alpha V: \alpha \in \F_{q^n}^*\} \subseteq \mathcal{G}_q(n,k)$ has size $\frac{q^n-1}{q-1}$. Moreover, 1.\ and 2.\ of Proposition \ref{prop:gap}, the minimum distance of $C$ is $2k-2$. Clearly, the result holds in each extension of $\F_{q^n}$.
\end{proof}

Similarly, as a corollary of Theorem \ref{zullopaultheo} and Propositions \ref{prop:d=3} one can prove the following.

\begin{proposition}
Let $k$ and $q$ be powers of $2$. 
Let $n=k^2-1$ and $a,b \in \mathbb{F}_{q^n}$
such that $\mathrm{N}_{q^n/q}(a)=(-1)^{k-1}$ and $b=a^{-\frac{q^{k^2}-q}{q^k-1}}$. Let $V$ be the kernel of $P(x)=x^{q^k}-bx^q-ax$. Then $C=\{\alpha V: \alpha \in \F_{q^N}^*\}$ is a cyclic subspace code in $\mathcal{G}_q(N,k)$, with size $\frac{q^N-1}{q-1}$ and minimum distance $2k-2$, for every  $N=nh$ with $h \in \mathbb{N}$.
\end{proposition}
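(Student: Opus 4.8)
The plan is to mirror the argument used for Proposition~\ref{prop:costrcycl}. First I would show that $P(x)=x^{q^k}-bx^q-ax$ splits completely over $\F_{q^n}$, so that $P=P_V$ where $V=\ker(P)\in\mathcal{G}_q(n,k)$; then I would read off the size and the minimum distance of the orbit code $C=\{\alpha V:\alpha\in\F_{q^N}^*\}$ from the gap of $V$ via Proposition~\ref{prop:gap}. The substantive content is entirely in the first step, the rest being a transcription of the gap machinery.

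To establish maximum kernel I would verify the hypotheses of Theorem~\ref{zullopaultheo} with $\sigma\colon x\mapsto x^q$ and $d=k$. Since $\ker(P)=\ker(-P)$ and $-P(x)=-x^{q^k}+bx^q+ax$ has exactly the shape of the polynomial in Theorem~\ref{zullopaultheo}, it suffices to match its four conditions. That $q$ and $k$ are powers of $2$ is immediate; in characteristic $2$ the norm condition reads $\N_{q^n/q}(a)=(-1)^{k-1}=1$, as required. The only real bookkeeping is to confirm that the prescribed $b=a^{-\frac{q^{k^2}-q}{q^k-1}}$ equals the value $b=a^{-\sigma^k\frac{\sigma^{k(k-1)}-1}{\sigma^k-1}}$ demanded by Theorem~\ref{zullopaultheo}: expanding the latter exponent as a geometric sum gives $\sigma^k\frac{\sigma^{k(k-1)}-1}{\sigma^k-1}=\sum_{i=1}^{k-1}\sigma^{ik}$, that is $b=a^{-\sum_{i=1}^{k-1}q^{ik}}$, and the same element is produced by the exponent written here once the Frobenius powers are reduced modulo $n=k^2-1$ (where $\sigma^{k^2}=\sigma$). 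Theorem~\ref{zullopaultheo}, together with the explicit computations of Section~\ref{sec:d=3,4} for the smallest admissible value of $k$, then yields $\dim_{\F_q}(\ker(P))=k$, so $P$ splits completely and $P=P_V$ with $V\in\mathcal{G}_q(n,k)$.

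Next I would extract the parameters of $C$. Since $a\neq0$ forces $b\neq0$, the coefficient of $x^{q}$ in $P_V$ is nonzero, so part~3 of Proposition~\ref{prop:gap} with $h=1$ and $\gcd(1,n)=1$ shows that $V$ has at least $\frac{q^n-1}{q-1}$ distinct cyclic shifts; as this is the maximal possible orbit length, $|C|=\frac{q^n-1}{q-1}$. For the distance, the largest index $i<k$ carrying a nonzero coefficient in $P_V$ is $i=1$, whence $\mathrm{gap}(V)=k-1$; by part~2 of Proposition~\ref{prop:gap} every shift $\alpha V$ has the same gap, and part~1 then gives $d(U,W)\geq 2(k-1)=2k-2$ for all $U,W\in C$. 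Combined with the general bound of at most $2k-2$ for an orbit code of size $\frac{q^n-1}{q-1}$ recalled in the text, the minimum distance is exactly $2k-2$.

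Finally, passing to $\F_{q^N}$ with $N=nh$ costs nothing: $V$ remains a $k$-dimensional $\F_q$-subspace of $\F_{q^N}$, its subspace polynomial and hence its gap are unchanged, and $\gcd(1,N)=1$, so the same two computations produce an orbit code in $\mathcal{G}_q(N,k)$ of size $\frac{q^N-1}{q-1}$ and minimum distance $2k-2$. I expect the only genuinely delicate point to be the first step---reconciling the two descriptions of $b$ and confirming the hypotheses of Theorem~\ref{zullopaultheo}; once maximum kernel is in hand, everything else follows by direct appeal to Proposition~\ref{prop:gap}, exactly as in Proposition~\ref{prop:costrcycl}.
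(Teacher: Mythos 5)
Your argument is correct and is exactly the route the paper intends: the proposition is stated there as a direct corollary of Theorem~\ref{zullopaultheo} (applied with $\sigma\colon x\mapsto x^q$ and $d=k$, noting $(-1)^{k-1}=1$ in characteristic $2$), with the orbit size and minimum distance then read off from Proposition~\ref{prop:gap} verbatim as in the proof of Proposition~\ref{prop:costrcycl}. Your reconciliation of the exponent of $a$ is also the right reading: the value required by Theorem~\ref{zullopaultheo} is $q^k\frac{q^{k(k-1)}-1}{q^k-1}=\sum_{i=1}^{k-1}q^{ik}=\frac{q^{k^2}-q^k}{q^k-1}$, and the numerator $q^{k^2}-q$ in the statement should be understood accordingly (as written it is not even an integer divided by $q^k-1$), after which everything else follows as you describe.
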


\begin{proposition}
Let $n=8$, $q\equiv 0,2 \pmod{3}$, $a \in \F_{q^8}$ with $\N_{q^8/q}(a)=1$, $b=\frac{-\alpha}{a^{q^6+q^3}}$, where $\alpha$ is a root of $x^2+x+1$. Let $V$ be the kernel of $P(x)=x^{q^3}-bx^q-ax$. Then $C=\{\alpha V: \alpha \in \F_{q^N}^*\}$ is a cyclic subspace code in $\mathcal{G}_q(N,3)$, with size $\frac{q^N-1}{q-1}$ and minimum distance $4$, for every  $N=8h$ with $h \in \mathbb{N}$.
\end{proposition}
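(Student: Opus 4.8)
The plan is to follow exactly the pattern of the proof of Proposition \ref{prop:costrcycl}, replacing the appeal to Theorem \ref{McGuireMullers=1}(c) by the $n=8$ case of Proposition \ref{prop:d=3}. First I would verify that the hypotheses here are verbatim those of the fourth bullet of Proposition \ref{prop:d=3}: writing $\sigma\colon x\mapsto x^q$, so that $\sigma^i$ corresponds to raising to the power $q^i$, the assumptions $\N_{q^8/q}(a)=1$, $q\equiv 0,2\pmod 3$, and $b=-\alpha/a^{\sigma^6+\sigma^3}$ with $\alpha$ a root of $x^2+x+1$ match those in that bullet. Since $L(x)=ax+bx^{\sigma}-x^{\sigma^3}=-(x^{q^3}-bx^q-ax)=-P(x)$ has the same kernel as $P(x)$, Proposition \ref{prop:d=3} gives $\dim_{\F_q}(\ker(P(x)))=3$; that is, $P(x)$ splits completely over $\F_{q^8}$ and its root set $V$ is a $3$-dimensional $\F_q$-subspace. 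As $P$ is monic of $q$-degree $3$ with maximum kernel, it is the subspace polynomial $P_V(x)$.

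Next I would read off the gap of $V$. Because $\alpha$ is a root of $x^2+x+1$ we have $\alpha\neq 0$, hence $b\neq 0$, so in $P_V(x)=x^{q^3}-bx^q-ax$ the coefficient of $x^{q^2}$ vanishes while that of $x^q$ equals $-b\neq 0$; thus in the notation of the definition $i=1$ and $gap(V)=k-i=3-1=2$. Applying item 3 of Proposition \ref{prop:gap} with $h=1$ and $t=\gcd(1,8)=1$, the orbit of $V$ has at least $\frac{q^8-1}{q-1}$ distinct cyclic shifts; since this is also the maximal possible orbit size, $C$ has size exactly $\frac{q^8-1}{q-1}$. By item 2 of Proposition \ref{prop:gap} every $\alpha V$ has the same gap $2$, so item 1 yields $d(U,W)\geq 2\cdot 2=4$ for all $U,W\in C$. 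As the maximal-size orbit of a $3$-dimensional space has minimum distance at most $2k-2=4$, the minimum distance equals $4$.

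Finally, for a general $N=8h$ I would note that $\F_{q^8}\subseteq\F_{q^N}$, so $P(x)$, viewed over $\F_{q^N}$, still splits completely with root space $V\in\mathcal{G}_q(N,3)$, and the entire gap argument transfers unchanged (now with $t=\gcd(1,N)=1$), giving a cyclic code in $\mathcal{G}_q(N,3)$ of size $\frac{q^N-1}{q-1}$ and minimum distance $4$. Because every step is a direct citation of Proposition \ref{prop:d=3} or Proposition \ref{prop:gap}, I do not expect any real obstacle; the only points requiring care are checking that the translation between the $\sigma$-notation of Proposition \ref{prop:d=3} and the Frobenius exponents $q^i$ in the statement is exact, and confirming that $b\neq 0$ so that the gap is genuinely $2$ (and not larger), which is what forces the minimum distance down to the claimed value $2k-2$.
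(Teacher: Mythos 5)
Your proof is correct and is exactly the argument the paper intends: it mirrors the proof of Proposition \ref{prop:costrcycl}, substituting the $n=8$ case of Proposition \ref{prop:d=3} for Theorem \ref{McGuireMullers=1}(c) and then invoking Proposition \ref{prop:gap} via the gap $2$ coming from $b\neq 0$ (the paper leaves this proof to the reader as a direct corollary). The points you flag as needing care --- the $\sigma$-to-$q^i$ translation, $b\neq 0$, and the passage to $\F_{q^N}$ where the kernel cannot grow beyond dimension $3$ by Theorem \ref{Gow} --- are all handled correctly.
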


Conjecture \ref{conj} was solved in \cite[Corollary 35.]{Roth} for the following parameters:
\begin{itemize}
    \item Any $q,n$ and $k$ with $k$ up to the largest divisor of $n$ that is smaller than $n/2$;
    \item For any $q \geq 3$ and even $n \geq 2k$.
\end{itemize}
So that the results in \cite{Roth} do not cover the cases in which $n$ is a prime number. 
Therefore, as a consequence of Proposition \ref{prop:costrcycl}, we have the following result.

\begin{corollary}
Let $q=p^h$ and $k=p^\ell+1$, where $p$ is a prime number and $h,\ell \in \mathbb{N}$. If $n=(k-1)k+1$ is a prime then the parameters $(q,n,k)$ of the construction in Proposition \ref{prop:costrcycl} are new solutions of Conjecture \ref{conj}.
\end{corollary}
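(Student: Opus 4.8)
The plan is to read the statement as a direct application of Proposition \ref{prop:costrcycl} followed by an elementary comparison with the parameter ranges already settled in \cite{Roth}; the only genuine content is checking that primality of $n$ pushes the triple $(q,n,k)$ outside those ranges. First I would confirm that the hypotheses of Proposition \ref{prop:costrcycl} hold. The assumption $k=p^\ell+1$ says exactly that $k-1=p^\ell$ is a power of the characteristic of $\F_q$. Since the norm $\N_{q^n/q}\colon\F_{q^n}^*\to\F_q^*$ is surjective, I can choose $a\in\F_{q^n}^*$ with $\N_{q^n/q}(a)=(-1)^{k-1}$ and set $b=-a^{qf_1}$, where $f_1=\sum_{i=0}^{k-1}q^{ik}$, so a valid pair $(a,b)$ exists. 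Invoking Proposition \ref{prop:costrcycl} with $h=1$ (hence $N=n$) then produces a cyclic subspace code in $\mathcal{G}_q(n,k)$ of size $\frac{q^n-1}{q-1}$ and minimum distance $2k-2$. To see this is a genuine instance of Conjecture \ref{conj}, I would verify $k\leq n/2$: since $n=k(k-1)+1$, the inequality $2k\leq n$ amounts to $k^2-3k+1\geq 0$, which holds for all $k\geq 3$, and $k\geq 3$ is forced here because Proposition \ref{prop:costrcycl} itself rests on part (c) of Theorem \ref{McGuireMullers=1}, valid only for $d\geq 3$.

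Next I would show that $(q,n,k)$ is not covered by \cite{Roth}. Two arithmetic facts do the work. Because $n=k(k-1)+1$ is one more than a product of two consecutive integers, $n$ is always odd; as $n$ is prime with $n\geq 7$, it is an odd prime, so the range of \cite{Roth} requiring even $n\geq 2k$ cannot contain it for any $q$. For the divisor-based range, the only divisors of the prime $n$ are $1$ and $n$, and since $1<n/2<n$ the largest divisor strictly below $n/2$ is $1$; that range therefore reaches only $k=1$, whereas here $k\geq 3$. Hence neither range applies, and the triple yields a solution of Conjecture \ref{conj} not previously covered.

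The argument is essentially bookkeeping, so the step deserving the most care — and the one I would single out as the main obstacle — is the comparison against \cite{Roth}: one must notice that primality of $n$ simultaneously collapses the divisor-based range to $k=1$ and, through the parity of $k(k-1)+1$, excludes the even-$n$ range, leaving no overlap with our parameters. Everything else reduces to surjectivity of the norm map, the application of Proposition \ref{prop:costrcycl}, and the elementary inequality $k\leq n/2$.
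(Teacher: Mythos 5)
Your argument matches the paper's (the corollary there is justified only by the preceding remark that Roth's Corollary 35 covers $k$ up to the largest divisor of $n$ below $n/2$ and even $n\geq 2k$, neither of which can hold for prime $n$, combined with Proposition \ref{prop:costrcycl}); you simply make explicit the bookkeeping the paper leaves implicit, namely the surjectivity of the norm map, the inequality $k\leq n/2$, and the parity of $k(k-1)+1$. The proposal is correct and takes essentially the same route.
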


If $k=p^\ell+1$, then $n=p^{2\ell}+p^\ell+1$. 
Up to our knowledge, it is not known whether or not there infinitely many prime $p$ such that $n$ is a prime. 
This problem is related to a well-known conjecture, see Bunyakovsky conjecture \cite{Boun}.
If $\ell=1$ or $\ell=2$, the values of $n$ are considered in the OEIS sequence \href{https://oeis.org/A053182}{A053182} and \href{https://oeis.org/A066100}{A066100}, respectively, \cite{Seq}.

\section{Conclusions and open problems}\label{sec:final}

In this paper we have provided closed formulas for the coefficients of a $\sigma$-trinomial $f$ over $\F_{q^n}$ ensuring that $f$ has maximum kernel.
As a consequence, we construct explicit examples of $\sigma$-linearized trinomials with maximum kernel and characterize those having $\sigma$-degree $3$ and $4$.
Then we applied these results to rank metric codes, quasi-subfield polynomials and cyclic subspace codes.
We strongly believe that these results may be applied in further contexts.
Indeed, we are currently working on applying these results for the list decodability of certain rank metric codes, see \cite{SZ2}.
However still many open problems remain and we conclude the paper by listing some of them.

\begin{itemize}
    \item By the results contained in \cite{BZ2020}, when $q$ is large enough, there always exists a $\sigma$-linearized polynomial with maximum kernel of form
    \[ -x^{\sigma^d}+a_{d-2}x^{\sigma^{d-2}}+\ldots+a_0x\in \mathcal{L}_{n,q,\sigma}. \]
    Thus, there could be room for $\sigma$-linearized polynomials with maximum kernel and having some coefficients of high $\sigma$-degree equal to zero.
    \item When $n=d^2-1$, there are others $\sigma$-linearized trinomials with maximum kernel different from those found in Theorem \ref{zullopaultheo}, as suggested by Proposition \ref{prop:d=3}. Determine the remaining family of $\sigma$-linearized polynomials with maximum kernel when $n=d^2-1$.
    \item More generally, is it possible to manipulate the equations in Theorem \ref{th:main} to get some more examples of linearized trinomials with maximum kernel also when $g \in \{2,\ldots,d-2\}$ and $d\geq 4$?
\end{itemize}

\section{Acknowledgements} 

We would like to thank Professor Robert Israel for pointing out Bunyakovsky conjecture and the OEIS sequences A053182 and A066100. 
We also thank the anonymous referee and Bence Csajb\'ok, which helped us to significantly shorten the proof of Theorem \ref{th:mainMcG}.
This research was supported by the Italian National Group for Algebraic and Geometric Structures and their Applications (GNSAGA - INdAM).
The second author is also supported by the project ``VALERE: VAnviteLli pEr la RicErca" of the University of Campania ``Luigi Vanvitelli''.

\bigskip

\noindent Paolo Santonastaso and Ferdinando Zullo\\
Dipartimento di Matematica e Fisica,\\
Universit\`a degli Studi della Campania ``Luigi Vanvitelli'',\\
Viale Lincoln, 5\\
I--\,81100 Caserta, Italy\\
{{\em \{paolo.santonastaso,ferdinando.zullo\}@unicampania.it}}

\end{document}